\newtheorem{corollary}{Corollary}
\newtheorem{lemma}{Lemma}
\newtheorem{proposition}{Proposition}
\declaretheorem[name=Theorem]{theorem}
\theoremstyle{definition}
\newtheorem{definition}{Definition}
\newtheorem{remark}{Remark}
\newtheorem{question}{Question}
\newtheorem{example}{Example}
\crefname{theorem}{Theorem}{theorem}
\crefname{proposition}{Proposition}{proposition}
\newcommand{\defeq}{\coloneq}
\newcommand{\la}{\langle}
\newcommand{\ra}{\rangle}
\renewcommand{\phi}{\varphi}
\newcommand{\uball}{{\bB_d}}
\newcommand{\Hbeta}{{\cH_\beta}}
\newcommand{\Hk}{{\cH_k}}
\newcommand{\Bd}{{\bB_d}}
\newcommand{\cB}{{\mathcal B}}
\newcommand{\cF}{{\mathcal F}}
\newcommand{\cG}{{\mathcal G}}
\newcommand{\cH}{{\mathcal H}}
\newcommand{\cL}{{\mathcal L}}
\newcommand{\cM}{{\mathcal M}}
\newcommand{\cX}{{\mathcal X}}
\newcommand{\bB}{{\mathbb B}}
\newcommand{\bC}{{\mathbb C}}
\newcommand{\bD}{{\mathbb D}}
\newcommand{\bN}{{\mathbb N}}
\newcommand{\bR}{{\mathbb R}}
\DeclareMathOperator{\Mult}{Mult}
\DeclareMathOperator{\Ran}{Ran}
\DeclareMathOperator{\Ker}{Ker}
\DeclareMathOperator{\Span}{span}
\DeclareMathOperator{\Rank}{rank}
\DeclareMathOperator{\Hol}{Hol}
\title{Generalized de Branges-Rovnyak spaces}
\author{Alexandru Aleman and Frej Dahlin}
\begin{document}
\maketitle
\begin{abstract}
Given the reproducing kernel $k$ of the Hilbert space $\Hk$
we study spaces $\Hk(b)$ whose reproducing kernel has the form
$k(1-bb^*)$, where $b$ is a row-contraction on $\Hk$. In terms
of reproducing kernels this is the most far-reaching generalization
of the classical de Branges-Rovnyaks spaces, as well as their very
recent generalization to several variables. This includes the
so called sub-Bergman spaces \cite{zhu_subbergm1} in one or several 
variables. We study some general properties of $\Hk(b)$ e.g. when the 
inclusion map into $\Hk$ is compact. Our main result provides a model for $\Hk(b)$
reminiscent of the Sz.-Nagy-Foia\c{s} model for contractions 
(see also \cite{aleman_conbackshift}). As an application we obtain 
sufficient conditions for the containment and density of the linear
span of $\{k_w:w\in\cX\}$ in $\Hk(b)$. In the standard cases this
reduces to containment and density of polynomials. These methods
resolve a very recent conjecture \cite{gu_hypercontractions}
regarding polynomial approximation in spaces with kernel
$\frac{(1-b(z)b(w)^*)^m}{(1-z\overline w)^\beta}, 1\leq m<\beta, m\in\bN$.
\end{abstract}
\newpage

\newpage
\section{Introduction}
Motivated by models for contractions, de Branges and Rovnyak initiated 
in the 60's \cite{dbr65, dbr66} the study of the Hilbert spaces with a reproducing 
kernel of the form \begin{equation}\label{kb1}
	k(z,w) = \frac{1-b(z)b(w)^*}{1-z\overline w},\quad z,w\in \bD,
\end{equation}
Here, as usual, $\bD$ denotes the unit disc in the complex plane and the 
operator-valued function $b:\bD\to \cB(\bC,l^2)$ is analytic with operator 
norm bounded by one in $\bD$. These ideas have been considerably developed by 
Sarason \cite{sarason} who studied extensively these spaces in the case when 
$b$ is a scalar-valued analytic function, and called them 
\emph{sub-Hardy spaces} $\cH(b)$. For example, he represented them as ranges 
of the defect operators $(I-T_bT_b^*)^{1/2}$ endowed with the norm making 
these operators co-isometric. Here $T_b$ denotes the (analytic) Toeplitz 
operator with symbol $b$ on the Hardy space $H^2$.
\\
$\cH(b)$-spaces attracted a lot of attention during the years, a good 
account  on those developments can be found in the  book by Fricain and 
Mashreghi \cite{fricain-mashreghi} and in the references therein. An 
important idea of Sarason \cite{sarason} reveals a kind of dichotomy 
regarding the structure of these spaces. Roughly speaking, it turns out that 
the properties of $\cH(b)$ are either similar to those of  a shift invariant 
subspace of $H^2$, or  to those of the orthogonal complement of such a 
subspace (model space). The first alternative occurs when $b$ is not an 
extreme point of the unit ball of $H^\infty$, the algebra of bounded analytic 
functions in $\bD$, while the second occurs  when $b$ is an extreme point of 
that set. In fact, it is proved in \cite{sarason} that the following are 
equivalent: \begin{enumerate}[label=(\roman*)]
\item	$b\in\cH(b)$,
\item	$\cH(b)$ is invariant under the shift operator defined by $Sf(z) = zf(z)$,
\item	the polynomials are contained in $\cH(b)$ and are dense in the space,
\item	$b$ is not an extreme point of the unit ball of $H^\infty$.
\end{enumerate}
We shall refer to the above situation as Sarason's dichotomy.
\\
Zhu \cite{zhu_subbergm1}, \cite{zhu_subbergm2} developed these ideas by 
considering Toeplitz operators $T_b$ acting  on the Bergman space, i.e. the 
range of $(I-T_bT_b^*)^{1/2}$ with the appropriate norm. We should point out 
that these ranges are quite different. For example, it turns out that
$\Ran(I-T_bT_b^*) = \Ran(I-T_b^*T_b)$, which fails dramatically  in the Hardy 
space context.
\\
Such spaces are called \emph{sub-Bergman} spaces and by analogy to 
\eqref{kb1} they have the reproducing kernel \begin{equation}\label{sub_B}
	k(z,w)=\frac{1-b(z)\overline{b(w)}}{(1-z\overline w)^2},
\end{equation}
i.e. it is obtained by replacing in \eqref{kb1} the Szeg\H o kernel by the 
Bergman kernel.  All of these spaces are invariant for the forward shift and 
contain the polynomials. For our purposes it is important to note that in 
\cite{chu_density} Chu proved the density of polynomials in all sub-Bergman 
spaces.
\\
This idea generated a lot of further work
\cite{zhu_subbergm2,zhu_subbergm3,abkar-jafarzadeh,symesak} mainly devoted to 
extensions to weighted Bergman spaces in one or several variables.

Another important and very recent development regards the generalization of the
original de Branges-Rovnyak spaces to the context of several complex 
variables. From an operator-theoretic point of view, the most natural 
generalization of the Szeg\H o kernel to the unit ball in several complex 
variables is  the Drury-Arveson kernel \[
	k(z,w)=\frac1{1-\la z, w\ra}, \quad z,w\in \bB_d.
\]
Jury and Martin considered in \cite{jury-martin_extremal} 
reproducing kernel Hilbert spaces with kernel of the form
\begin{equation}\label{kb2} 
	k_b(z,w)=\frac{1-b(z)b(w)^*}{1-\la z, w\ra},
\end{equation}
where $b$ is a row-vector pointwise multiplier of the Drury-Arveson space. 
They characterized the multipliers $b$ for which the equivalent assertions
(i), or (ii) from above hold, showing in particular, that the Sarason dichotomy 
does occur in this case. Shortly after, Hartz \cite{hartz_columnrow} 
completed their work by showing that the equivalent statements (i) and (ii) 
above hold precisely when $b$ is an extreme point of the unit ball of the 
multiplier space. However, the density of polynomials (assertion (iii))  when 
(i) or (ii) hold, remains open in this context.
\\
Spaces with kernel of the form \eqref{kb2} appear also in the recent doctoral 
thesis of Sautel as models for certain commuting tuples which are 
expansive on the complement of  their null-space.  Moreover, 
in \cite{sautel} they provide sufficient conditions under which a 
kernel of the form \eqref{kb2} is a complete Nevanlinna-Pick kernel.

The present paper is concerned with a very general construction of this type. 
More precisely, we consider a reproducing kernel Hilbert space with 
reproducing kernel $k$ on a non-void set $\cX$, denoted by $\cH_k$. In order 
to deal with vector-valued $b$'s as well, we  assume that $b$ is a 
non-constant contractive multiplier of $\cH_k\otimes \ell^2$ into  $\cH_k$. 
This implies (see \S 2.1 below) that \[
	k^b(x,y)=(1-b(x)b(y)^*)k(x,y),\quad x,y\in \cX,
\]
is the reproducing kernel of a Hilbert function space on $\cX$ which we 
denote by $\cH_k(b)$. It could be called a \emph{sub-$\cH_k$ space}.
\\ 
Adopting this general point of view reveals  that a number of properties of 
the particular spaces mentioned above, especially in the sub-weighted Bergman 
case, only depend on general results about reproducing kernels. For example, 
in \S 3.2 we consider the embedding of $\cH_k(b)$ into $\cH_k$ which is 
always contractive.  We prove that a necessary condition for the compactness 
of this embedding is that $\|b(x_n)\|\to 1$, whenever $k(x_n,x_n)\to\infty$. 
\\
In the case when $\cH_k$ is a space of analytic functions on the unit disc
$\bD$ with $k(z_n,z_n)\to\infty$ whenever $|z_n|\to 1$ and $b$ is a
scalar-valued multiplier, the above condition implies that $b$ is a finite 
Blaschke product. In Theorem \ref{compact_embedd} we show that this condition 
is also sufficient provided that it is sufficient in the simplest case 
$b(z)=z$. This is a fairly far-reaching generalization of results of Zhu 
\cite{zhu_subbergm1, zhu_subbergm3} proved for standard weighted 
Bergman spaces.
\\
The results extend also to the context of several complex 
variables, namely when  $k$ is a power of the Drury-Arveson kernel and $b$ is 
an automorphism of the unit ball. We also supply some examples showing that, 
in general, the above condition fails to  be sufficient.

Beyond extending  these interesting  results, the study of general $\cH_k(b)$-
spaces has additional motivations.  In this paper we shall focus on the 
following two aspects: \begin{enumerate}[label=\arabic*)]
\item Reproducing kernel structure induced by operator  inequalities,
\item Reproducing kernels with a complete Nevanlinna-Pick factor.
\end{enumerate}
\noindent 1) When $b(0) = 0$ the original de Branges-Rovnyak spaces $\cH(b)$ 
with $b$ scalar-valued are precisely the spaces of analytic functions on $\bD$ 
with reproducing kernel normalized at $0$ where the backward shift $L$ is 
contractive and, in addition,  $I-LL^*$ has rank one. The recent paper 
\cite{aleman_conbackshift} provides a thorough study of such spaces without 
the last assumption, which yields Hilbert spaces with reproducing kernel of 
the general form \eqref{kb1}. Moreover, it turns out  that the case when 
$\Rank(I-LL^*) <\infty$ leads to a theory analogous to the one of sub-Hardy 
spaces, in particular Sarason's dichotomy holds.  Extending these ideas to 
the context of several variables is obviously very complicated due to the 
intricate structure of left inverses to the row operator given by 
multiplication of the coordinates. For this reason, the expansivity of that 
row operator plays the central role in Sautel's work \cite{sautel} mentioned above.
\\
The second hereditary inequality is more involved and was derived by Shimorin 
in \cite{shimorin_wold}. If $S$ denote the forward shift on a space of 
analytic functions $\cH$ on the disc with normalized reproducing kernel at 
the origin, Shimorin's  condition reads \begin{equation}\label{Shimorin_condition}
	\|Sf + g\|^2 \leq 2(\|f\|^2 + \|Sg\|^2).
\end{equation}
It turns out (see also \S3.1 below) that it characterizes general 
sub-Bergman spaces obtained by replacing the scalar function $b$ in 
\eqref{sub_B} by an operator-valued one. In \S 3.1 we prove a generalization 
of both inequalities. Assume that $\cH$ is a reproducing kernel Hilbert 
space of analytic functions in $\bD$ with normalized kernel at the origin.
We show that the inequality \[
	\left\|f_0+\sum_{n\ge 1}u_nf_n\right\|^2
	\le\|\varphi f_0\|^2+\sum_{n\ge 1}\|f_n\|^2, \quad f_n\in \cH,\, n\ge 0,
\]
where $\varphi, u_n,~n\ge 1$ are fixed multipliers, 
characterizes $\cH_k(b)$-spaces, with  $k$ a kernel of Bergman-type, a class 
introduced by McCullough and Richter in \cite{MR}.
\\
2) By a normalized complete Nevanlinna-Pick (CNP) kernel on the non-void set
$\cX$, normalized  at $z_0\in \cX$ we mean a kernel of the form \[
	(z,w)\mapsto\frac1{1-b(z)b(w)^*}, 
	\qquad b(z)b(w)^* = \la b(z), b(w)\ra_{\ell^2},\quad z,w\in \cX,
\]
where $\|b(z)\|_{l^2}\le 1,~z\in \cX,~b(z_0)=0$. Such reproducing kernels  
arise from solving certain interpolation problems for multipliers 
\cite{agler-mccarthy}. These kernels have a number of remarkable properties  
and have attracted a lot of attention recently. Here we are interested in a 
much larger class, namely those which have a normalized CNP factor, i.e. the 
reproducing kernels $k$ for  which there is a normalized CNP kernel $s$ such 
that $k/s$ is a reproducing kernel as well (positive definite). Such kernels
play a crucial role for the very general commutant lifting theorem by S. Shimorin
\cite{shimorin_leech}, and very recently in \cite{aleman_cnpfactor} it has 
been shown that functions in spaces with such kernels can be represented as 
quotients of multipliers. Given $k$ with the CNP factor 
$\frac1{1-b(z)b(w)^*}$  the "quotient" kernel \[ 
	k_b(z,w)=k(z,w)(1-b(z)b(w)^*),
\]
corresponds to the $\cH_k(b)$ space which actually  appears in Shimorin's 
commutant lifting theorem as well. One of the main difficulties in dealing 
with such spaces is to understand which functions belong to them  other than 
the reproducing kernels. In \S 4.1 we give sufficient conditions such that 
$\cH_k(b)$ contains the kernels $k_y\in \cH_k$ or even more, such that the span 
of these kernels is dense in $\cH_k(b)$ (Theorem \ref{thm:Hkb-k-dense}).
The proof of this approximation theorem is based on an analogue of the
Sz.-Nagy-Foia\c{s} model inspired by the work in \cite{aleman_conbackshift}. 
The idea is interesting in its own right and is presented in \S 3.3.
\\
There is a large class of examples satisfying these conditions, like the 
sub-Hardy spaces $\cH(b)$ with $b$ non-extremal in the unit ball of 
$H^\infty$ and all $\cH_k(b)$ spaces with $\cH_k$, essentially every weighted 
Bergman space on a domain in $\bC^d$ and $b$ a non-constant analytic row 
contraction on that domain.  Intuitively speaking, the kernels 
$k_y\in \cH_k,~y\in \cX$ play the same role as polynomials in the theory of 
spaces of analytic functions on the $d$-dimensional unit ball. In particular, 
these results extend Chu's density theorem \cite{chu_density} to sub-Bergman 
spaces on very general domains in $\bC^d$. In fact, for the standard weighted 
Bergman spaces on the ball we show in \S 5.1 that Sarason's dichotomy does 
not occur, that is conditions (i), (ii) and (iii) hold in any of such sub-
weighted Bergman spaces. 

Finally, these results and ideas  apply to an interesting class of kernels 
considered recently in \cite{gu_hypercontractions},  namely  positive
integer powers of the sub-weighted Bergman kernels \[
	\frac{(1-b(z)b(w)^*)^m}{(1-z\overline w)^\beta} \qquad m\in\bN, m<\beta\in\bR.
\]
These kernels are naturally connected to $m$-hypercontractive operators on 
standard  weighted Bergman spaces. In \cite{gu_hypercontractions} it was 
conjectured that polynomials are dense in the corresponding spaces of 
analytic functions on $\bD$. As an application of the results mentioned above 
we prove that the conjecture holds true. Our methods yield a proof in the 
higher dimensional case as well.

The paper is organized as follows. Section 2 serves to a preliminary purpose.
Section 3 contains general properties of $\Hk(b)$, in \S3.1 we discuss the
operator inequalities mentioned before. \S3.2 is devoted to the embedding
of $\Hk(b)$ in $\Hk$ and the cases where it is compact. The analogue of
the Sz.-Nagy-Foia\c{s} model is described in \S3.3.  Section 4 contains the 
general approximation results, while in Section 5 we apply these ideas to 
general weighted Bergman spaces. The proof of the conjecture in 
\cite{gu_hypercontractions} is deferred to section 6.

\section{Preliminaries}

\subsection{Reproducing kernels} For the sake of completeness we recall some 
basic properties of these objects. The material is standard and can be found in 
\cite{aronszajn} and \cite{paulsen}. If $\cX$ denotes a non-empty set, a 
function $k:\cX\times\cX\to\bC$ is called \emph{positive} 
(write $k\gg 0$) if for all finite subsets $\{x_1,\ldots,x_n\}\subset\cX$ the 
matrix $[k(x_i,x_j)]_{i,j=1}^n$ is positive semi-definite. Given  $k\gg 0$  
there exists a unique Hilbert space $\Hk$ consisting of complex-valued functions on $\cX$, 
such that point-evaluations are bounded linear functionals on $\Hk$ given by \[
	f(y)=\la f,k_y\ra_{\Hk},\quad x,y\in \cX,
\]
where we have used the common notation $k_y=k(\cdot,y)$. For this reason, $k$ 
is called a \emph{reproducing kernel}.

It is easy to verify that if $k,h$ are reproducing kernels on $\cX$ with 
$k-h\gg 0$, then $\cH_{h}$ is contractively contained  in $\Hk$. In fact, the 
map  $k_y\to h_y$ extends by linearity to a contraction, whose adjoint is the 
inclusion map. This yields the following result.
\begin{proposition}\label{Inclusion criterion}
Let $k$ be a reproducing kernel on $\cX$, then $f:\cX\to\bC$ is in $\cH_k$ if 
and only if \[
	c^2k(x,y) - f(x)\overline{f(y)} \gg 0
\]
for some $c>0$ and the least such $c$ is equal to $\|f\|_{\cH_k}$.
\end{proposition}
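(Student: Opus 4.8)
The plan is to translate the positivity condition into a family of Cauchy--Schwarz inequalities over finite configurations and read off both membership and norm from them. Writing $(f\otimes\overline{f})(x,y)=f(x)\overline{f(y)}$, the condition $c^2k-f\otimes\overline f\gg 0$ says precisely that for every finite set $\{x_1,\dots,x_n\}\subset\cX$ and all scalars $a_1,\dots,a_n$,
\[
\Bigl|\sum_i \overline{a_i}\,f(x_i)\Bigr|^2 \le c^2\sum_{i,j}\overline{a_i}a_j\,k(x_i,x_j)
= c^2\Bigl\|\sum_i a_i k_{x_i}\Bigr\|_{\cH_k}^2 .
\]
First I would prove the forward implication. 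If $f\in\cH_k$, the reproducing property gives $\sum_i\overline{a_i}f(x_i)=\la f,\sum_i a_i k_{x_i}\ra_{\cH_k}$, so Cauchy--Schwarz yields the displayed inequality with $c=\|f\|_{\cH_k}$; adding a positive multiple of $k$ shows the inequality persists for every $c\ge\|f\|_{\cH_k}$, that is, $c^2k-f\otimes\overline f\gg 0$.

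For the reverse implication, suppose the inequality holds for some $c>0$. I would define a functional $L$ on the dense subspace $\Span\{k_x:x\in\cX\}\subset\cH_k$ by $L\bigl(\sum_i a_i k_{x_i}\bigr)=\sum_i\overline{a_i}f(x_i)$. The inequality gives $|L(g)|\le c\|g\|_{\cH_k}$, which simultaneously forces $L$ to be well defined and makes it a bounded conjugate-linear functional of norm at most $c$. The Riesz representation then produces $h\in\cH_k$ with $\|h\|_{\cH_k}\le c$ and $\la h,g\ra=L(g)$; testing against $g=k_x$ gives $h(x)=f(x)$ for every $x$, so $f=h\in\cH_k$ and $\|f\|_{\cH_k}\le c$.

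Combining the two directions shows that the set of admissible $c$ is exactly $[\,\|f\|_{\cH_k},\infty)$, so the least such $c$ equals $\|f\|_{\cH_k}$. The step I expect to require the most care is the well-definedness of $L$: the same element of $\cH_k$ admits many representations $\sum_i a_i k_{x_i}$, and it is precisely the positivity hypothesis, applied to the difference of two representations, that guarantees $L$ is independent of the choice. Alternatively, one can bypass this entirely by invoking the contractive-containment remark preceding the statement: since $f\otimes\overline f$ is the reproducing kernel of the one-dimensional space $\bC f$, in which $\|f\|=1$, the relation $c^2k-f\otimes\overline f\gg 0$ says that $\bC f$ embeds contractively into $\cH_{c^2k}=\cH_k$ (whose norm is that of $\cH_k$ scaled by $c^{-1}$), which yields $f\in\cH_k$ and $c^{-1}\|f\|_{\cH_k}\le 1$ at once.
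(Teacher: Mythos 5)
Your argument is correct, and your primary route differs from the paper's. The paper offers no computation at all: Proposition \ref{Inclusion criterion} is presented as an immediate consequence of the remark just before it, namely that $k-h\gg 0$ for two kernels implies $\cH_h$ is contractively contained in $\cH_k$ (the map $k_y\mapsto h_y$ extends to a contraction whose adjoint is the inclusion). Taking $h=c^{-2}f\otimes\overline{f}$, the reproducing kernel of the one-dimensional space $\bC f$ normalized so that $\|f\|_{\cH_h}=c$, yields the proposition in one line --- this is precisely the alternative you sketch in your closing sentences. Your main proof instead re-derives everything by hand: Cauchy--Schwarz against $\sum_i a_i k_{x_i}$ for the forward direction, and for the converse a bounded conjugate-linear functional on $\Span\{k_x:x\in\cX\}$ plus Riesz representation; you also correctly isolate the one delicate step, that positivity applied to the difference of two representations of the same element is what makes $L$ well defined. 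What your route buys is self-containedness (it effectively proves the rank-one case of the containment principle rather than citing it) and an explicit identification of the optimal constant; what the paper's route buys is brevity and reusability, since the same containment principle is invoked again later (e.g.\ for the contractive embedding of $\cH_k(b)$ into $\cH_k$ in Section 3). The only caveat, shared by the statement itself, is the trivial edge case $f=0$, where every $c>0$ is admissible and a least such $c$ does not exist.
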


The classical Schur's product theorem  for positive semi-definite matrices 
applies to reproducing kernels and gives the following result.
\begin{theorem}[Schur's product theorem]
Let $k$ and $l$ be  reproducing kernels on $\cX$. Then $kl$ is also a 
reproducing kernel.
\end{theorem}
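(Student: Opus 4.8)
The plan is to reduce the statement to the classical Schur product theorem for positive semi-definite matrices. By the definition of a reproducing kernel recalled above, $kl\gg 0$ means precisely that for every finite subset $\{x_1,\dots,x_n\}\subset\cX$ the entrywise (Hadamard) product $[k(x_i,x_j)\,l(x_i,x_j)]_{i,j=1}^n$ is positive semi-definite. Since $k$ and $l$ are themselves reproducing kernels, the matrices $A=[k(x_i,x_j)]_{i,j=1}^n$ and $B=[l(x_i,x_j)]_{i,j=1}^n$ are positive semi-definite, so it suffices to prove that the Hadamard product $A\circ B$ of two positive semi-definite matrices is again positive semi-definite.

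First I would factor the two matrices. Because $A$ and $B$ are positive semi-definite, we may write $A=CC^*$ and $B=DD^*$, so that $A_{ij}=\sum_m C_{im}\overline{C_{jm}}$ and $B_{ij}=\sum_n D_{in}\overline{D_{jn}}$. For an arbitrary vector $\xi=(\xi_1,\dots,\xi_n)$ the associated quadratic form then becomes
\[
	\sum_{i,j}\overline{\xi_i}\,\xi_j\,A_{ij}B_{ij}
	=\sum_{m,n}\Bigl|\sum_{j}\xi_j\,\overline{C_{jm}}\,\overline{D_{jn}}\Bigr|^2\ge 0,
\]
after interchanging the finite sums and recognizing, for each fixed pair $(m,n)$, that the contribution of the indices $i,j$ is a perfect square. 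This shows $A\circ B$ is positive semi-definite and, the finite set being arbitrary, that $kl\gg 0$.

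An alternative route stays entirely within the reproducing-kernel framework: the Hilbert space tensor product $\cH_k\otimes\cH_l$ carries the kernel $((x,y),(x',y'))\mapsto k(x,x')\,l(y,y')$ on $\cX\times\cX$, and restricting this kernel to the diagonal $\{(x,x):x\in\cX\}$ yields $kl$; since the restriction of a reproducing kernel to any subset is again a reproducing kernel (immediate from the finite-matrix criterion), one concludes $kl\gg 0$. I expect no genuine obstacle, as this is a classical fact; the only point demanding care is the bookkeeping with complex conjugates in the factorization, so that the quadratic form is assembled as an honest sum of squares rather than an indefinite expression.
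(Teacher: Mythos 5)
Your proof is correct and follows the same route the paper takes: the paper simply invokes the classical Schur product theorem for positive semi-definite matrices, noting that positivity of a kernel is by definition a statement about finite matrices $[k(x_i,x_j)]_{i,j=1}^n$. You do exactly this reduction, and in addition supply a correct self-contained proof of the matrix fact (the factorization $A=CC^*$, $B=DD^*$ and the sum-of-squares computation check out), which the paper treats as known.
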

\begin{definition}
Let $k$ be a reproducing kernel on $\cX$. A function $\phi:\cX\to\bC$ is
a \emph{multiplier} of $\cH_k$ if $\phi f\in\cH_k$ for every $f\in\cH_k$.
The algebra of these operators is denoted $\Mult(\Hk)$.
\end{definition}
Note that if $\phi\in \Mult(\Hk)$, then the induced linear map $M_\phi$ on 
$\cH_k, ~M_\phi f=\phi f$ is bounded by the closed graph theorem. Moreover, 
we have 
\begin{equation}\label{mult-adjoint}
	M_\phi^*k_y = \overline{\phi(y)}k_y.
\end{equation}
Consequently \[
	|\phi(x)| \leq \|M_\phi\|, \qquad x\in\cX,\,k_x\neq0.
\]
Using also the fact 
that the linear span of $\{k_y:y\in\cX\}$ is dense in $\Hk$ one can derive 
the following result.
\begin{proposition}\label{Multiplier criterion}
Let $k$ be a reproducing kernel on $\cX$, then $\phi$ is a multiplier of 
$\cH_k$ if and only if \[
	k(x,y)(c^2-\phi(x)\overline{\phi(y)}) \gg 0
\]
for some $c>0$ and the least such $c$ is equal to $\|M_\phi\|$.
\end{proposition}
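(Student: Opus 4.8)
The plan is to translate the positivity of the kernel $k(x,y)(c^2-\phi(x)\overline{\phi(y)})$ into the operator inequality $\|M_\phi^*f\|\le c\|f\|$, using the adjoint formula \eqref{mult-adjoint} together with the density of $\Span\{k_y:y\in\cX\}$ in $\cH_k$. The bridge between the kernel picture and the operator picture is a single computation: for a finite combination $f=\sum_i a_i k_{x_i}$, expanding both norms and inserting $M_\phi^*k_{x_i}=\overline{\phi(x_i)}k_{x_i}$ yields
\[
	c^2\|f\|^2-\|M_\phi^*f\|^2
	=\sum_{i,j}\overline{a_i}\,a_j\,k(x_i,x_j)\bigl(c^2-\phi(x_i)\overline{\phi(x_j)}\bigr).
\]
Thus the matrix $[k(x_i,x_j)(c^2-\phi(x_i)\overline{\phi(x_j)})]$ is positive semi-definite for every finite sample exactly when $c^2\|f\|^2\ge\|M_\phi^*f\|^2$ for all such $f$. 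This is the analogue, at the level of multipliers, of the identity underlying Proposition~\ref{Inclusion criterion}.

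For the forward implication I would assume $\phi\in\Mult(\Hk)$, so that $M_\phi$ is bounded, and take $c=\|M_\phi\|$. Then $\|M_\phi^*f\|\le\|M_\phi\|\,\|f\|$, and reading the displayed identity from left to right shows that the kernel is positive for this value of $c$, giving also that the least admissible $c$ is at most $\|M_\phi\|$.

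For the converse I would start from the hypothesis that the kernel is positive for some $c>0$ and use it to construct the operator. Define a linear map $T$ on the dense subspace $\Span\{k_y:y\in\cX\}$ by $T\bigl(\sum_i a_i k_{x_i}\bigr)=\sum_i a_i\overline{\phi(x_i)}k_{x_i}$. The displayed identity reads precisely $\|Tf\|\le c\|f\|$; in particular, whenever $f=0$ we get $Tf=0$, which is exactly what guarantees that $T$ is well defined independently of the chosen representation of $f$. Hence $T$ extends to a bounded operator on $\cH_k$ with $\|T\|\le c$ satisfying $Tk_y=\overline{\phi(y)}k_y$.

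Finally I would identify the adjoint of this extension with multiplication by $\phi$: for $f\in\cH_k$ and $y\in\cX$,
\[
	(T^*f)(y)=\la T^*f,k_y\ra=\la f,Tk_y\ra=\la f,\overline{\phi(y)}k_y\ra=\phi(y)\la f,k_y\ra=\phi(y)f(y).
\]
Therefore $T^*f=\phi f\in\cH_k$ for every $f$, so $\phi$ is a multiplier with $M_\phi=T^*$ and $\|M_\phi\|=\|T\|\le c$. Combining the two directions shows that the kernel is positive precisely for $c\ge\|M_\phi\|$, so the least such $c$ equals $\|M_\phi\|$. I expect the only genuinely delicate point to be the well-definedness of $T$ in the converse; the remainder is a direct transcription between the operator and kernel formulations.
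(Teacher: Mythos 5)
Your proof is correct and follows exactly the route the paper indicates for this proposition: it derives the criterion from the adjoint formula $M_\phi^*k_y=\overline{\phi(y)}k_y$ together with the density of $\Span\{k_y:y\in\cX\}$ in $\cH_k$, and you handle the one delicate point (well-definedness of $T$ on the kernel span) correctly. Nothing is missing.
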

Given two sets $\cF,\cG$ of complex-valued functions on $\cX$ we shall 
consider the linear span of their pointwise products, that is, \[
	\Span\cF\cdot\cG = \Span\{fg:f\in\cF,g\in\cG\}.
\]
The next result is essentially a consequence of the above. We include a short 
proof for the convenience of the reader.
\begin{proposition}\label{prop:kernel-factored}
Let $s,t$ be reproducing kernels on $\cX$ and $k=st$. Then:
\begin{enumerate}[label=(\roman*)]
	\item	$\Span\Mult(\cH_s)\cdot\Mult(\cH_t)\subset\Mult(\cH_k)$,
	in particular $\Mult(\cH_s)\subset\Mult(\cH_k)$ contractively,
	\label{item:mult-inclusion}
	\item	If $f\in\cH_s$ and $g\in\cH_t$, then $fg\in\cH_k$ and
	$\|fg\|_{\cH_k}\leq\|f\|_{\cH_s}\|g\|_{\cH_t}$,
	in particular if the constant function $1\in\cH_t$, then
	$\cH_s$ is boundedly contained in $\cH_k$,
	\label{item:bound-containment}
	\item   If $\cF$ and $\cG$ are dense sets in $\cH_s$ and $\cH_t$, 
	respectively, then $\Span\cF\cdot\cG$ is dense in $\cH_k$.
	\label{item:weak-product-dense}
\end{enumerate}
\end{proposition}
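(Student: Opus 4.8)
The plan is to reduce all three assertions to the Inclusion criterion (Proposition~\ref{Inclusion criterion}), the Multiplier criterion (Proposition~\ref{Multiplier criterion}) and Schur's product theorem, the decisive mechanism being a two-step combination of Schur products.

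I would start with \ref{item:bound-containment}, which carries the analytic content. Fix $f\in\cH_s$, $g\in\cH_t$ and set $c=\|f\|_{\cH_s}$, $d=\|g\|_{\cH_t}$. By the Inclusion criterion the kernels $c^2s(x,y)-f(x)\overline{f(y)}$ and $d^2t(x,y)-g(x)\overline{g(y)}$ are positive, and the rank-one kernels $f(x)\overline{f(y)}$ and $g(x)\overline{g(y)}$ are trivially positive. Multiplying out the Schur product of the two defect kernels would introduce an unwanted cross term, so instead I would combine them in two steps: by Schur's theorem
\[
	f(x)\overline{f(y)}\bigl(d^2t(x,y)-g(x)\overline{g(y)}\bigr)\gg 0
	\quad\text{and}\quad
	\bigl(c^2s(x,y)-f(x)\overline{f(y)}\bigr)d^2t(x,y)\gg 0,
\]
and adding these two positive kernels yields exactly $c^2d^2\,s(x,y)t(x,y)-(fg)(x)\overline{(fg)(y)}\gg 0$. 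Since $k=st$, the Inclusion criterion now gives $fg\in\cH_k$ with $\|fg\|_{\cH_k}\le cd$, which is the claimed bound. The ``in particular'' follows by taking $g=1$ when $1\in\cH_t$.

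For \ref{item:mult-inclusion} I would run the identical sandwich with the Multiplier criterion in place of the Inclusion criterion. Writing $a=\|M_\phi\|_{\cH_s}$ and $b=\|M_\psi\|_{\cH_t}$ for $\phi\in\Mult(\cH_s)$, $\psi\in\Mult(\cH_t)$, the positivity of $s(x,y)\bigl(a^2-\phi(x)\overline{\phi(y)}\bigr)$ and $t(x,y)\bigl(b^2-\psi(x)\overline{\psi(y)}\bigr)$, together with the positivity of the rank-one-scaled kernel $s(x,y)\phi(x)\overline{\phi(y)}$, leads by the same two-step argument to $s(x,y)t(x,y)\bigl(a^2b^2-(\phi\psi)(x)\overline{(\phi\psi)(y)}\bigr)\gg 0$. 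Hence $\phi\psi\in\Mult(\cH_k)$ with $\|M_{\phi\psi}\|_{\cH_k}\le ab$. Since $\Mult(\cH_k)$ is an algebra, finite sums of such products remain multipliers, giving the asserted span inclusion; taking $\psi\equiv 1$ (a contractive multiplier of every $\cH_t$) yields the contractive inclusion $\Mult(\cH_s)\subset\Mult(\cH_k)$.

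Finally, for \ref{item:weak-product-dense} I would use that $k_y=s_yt_y$, so the linear span of $\{k_y:y\in\cX\}$ coincides with the span of the products $s_yt_y$ and is dense in $\cH_k$. Given $y$, choose $f_n\in\cF$ with $f_n\to s_y$ in $\cH_s$ and $g_n\in\cG$ with $g_n\to t_y$ in $\cH_t$; the norm bound from \ref{item:bound-containment} shows the product map $(f,g)\mapsto fg$ is bounded and bilinear into $\cH_k$, so $f_ng_n\to s_yt_y=k_y$ in $\cH_k$. Thus every $k_y$ lies in the closure of $\Span\cF\cdot\cG$, and since these kernels are dense, so is $\Span\cF\cdot\cG$. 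The only genuinely nontrivial point in the whole argument is the two-step Schur-product combination in \ref{item:bound-containment}: the direct product of the two defect inequalities does not close, and one must instead multiply each defect by the appropriate positive factor and add. Everything else is bookkeeping.
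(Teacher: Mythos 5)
Your proof is correct and follows essentially the same route as the paper: both arguments reduce everything to the Inclusion/Multiplier criteria plus Schur's product theorem, and your ``two-step'' combination $(A-B)C + B(C-D)$ is exactly the mechanism the paper compresses into the sentence that ``the order is preserved when multiplying the left-hand side and the right-hand side, respectively,'' while your treatment of \ref{item:weak-product-dense} uses the same triangle-inequality estimate $fg-s_yt_y=(f-s_y)g+s_y(g-t_y)$ as the paper. The only cosmetic difference is that you prove \ref{item:bound-containment} in full and derive \ref{item:mult-inclusion} by the same sandwich, whereas the paper proves \ref{item:mult-inclusion} and omits \ref{item:bound-containment} as similar.
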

\begin{proof}
\ref{item:mult-inclusion} Let $\phi\in\Mult(\cH_s)$ and $\psi\in\Mult(\cH_t)$,
then by Proposition \ref{Multiplier criterion} \begin{align*} 
	s(x,y)\|M_\phi\|^2\gg s(x,y)\phi(x)\overline{\phi(y)}&\gg0, \\
	t(x,y)\|M_\psi\|^2\gg t(x,y)\psi(x)\overline{\psi(y)}&\gg0.
\end{align*}
By Schur's product theorem, the order is preserved when multiplying the
left-hand side and the right-hand side, respectively. That is \[
	k(x,y)\|M_\phi\|^2\|M_\psi\|^2
	\gg k(x,y)\phi(x)\psi(x)\overline{\phi(y)\psi(y)}
\] 
and it follows that $\phi\psi\in\Mult(\cH_k)$.
Clearly, any element in $\Span\Mult(\cH_s)\cdot\Mult(\cH_t)$ is a sum of 
products as above. The contractive containment follows by letting above 
$\psi=1$  which gives  \[
	k(x,y)(\|M_\phi\|^2-\phi(x)\overline{\phi(y)})\gg0.
\]
\ref{item:bound-containment} is very similar and will be omitted.
\\\ref{item:weak-product-dense}
By \ref{item:bound-containment} we have that $\cF\cdot\cG\subset\cH_k$. If 
$y\in \cX,~\varepsilon>0$,  and $f\in \cF,~g\in \cG$ with \[
	\|f-s_y\|_{\cH_s} < \varepsilon, \quad \|g-t_y\|_{\cH_t} < \varepsilon,
\]
by the first part of  \ref{item:bound-containment} we have \begin{align*}
	\|fg-k_y\|_{\cH_k}
	&\le \|g\|_{\cH_t}\|f-s_y\|_{\cH_s} + \|s_y\|_{\cH_s}\|g-t_y\|_{\cH_t} \\
	&\le \epsilon(\|t_y\|_{\cH_t} + \|s_y\|_{\cH_s} + \epsilon).
\end{align*}
which gives the desired result by the density of the linear span of the kernel
functions $k_y,\,y\in\cX$ in $\cH_k$.	
\end{proof}
A repeated direct application of the last part of the proposition leads to 
the following result.
\begin{corollary}\label{prop:dense-algebra}
Let $\cF$ be an algebra of functions on $\cX$ and let $s^j,~j = 1,\ldots,n$ 
be reproducing kernels on $\cX$ such that $\cF$ is dense in 
$\cH_{s^j}, j = 1,\ldots,n$. If $k=\prod_{j=1}^ns^j$ then $\cF$ is dense in 
$\cH_k$.
\end{corollary}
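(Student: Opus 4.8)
The plan is to argue by induction on the number of factors $n$, using part \ref{item:weak-product-dense} of Proposition \ref{prop:kernel-factored} at each step. The base case $n=1$ is immediate: there $k=s^1$, and $\cF$ is dense in $\cH_{s^1}=\cH_k$ by hypothesis, so nothing is to be shown.

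For the inductive step I would set $t=\prod_{j=1}^{n-1}s^j$, so that $k=t\cdot s^n$. By the induction hypothesis applied to the reproducing kernels $s^1,\ldots,s^{n-1}$ (whose product is $t$), the algebra $\cF$ is dense in $\cH_t$. Since $\cF$ is also dense in $\cH_{s^n}$ by assumption, part \ref{item:weak-product-dense} of Proposition \ref{prop:kernel-factored} yields that $\Span\cF\cdot\cF$ is dense in $\cH_{t\cdot s^n}=\cH_k$. The observation that closes the induction is that $\cF$ is an \emph{algebra}: products of elements of $\cF$ again lie in $\cF$, and $\cF$ is a linear subspace, so $\Span\cF\cdot\cF\subset\cF$. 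A subset containing a dense set is dense, hence $\cF$ itself is dense in $\cH_k$.

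I do not anticipate any genuine obstacle; once Proposition \ref{prop:kernel-factored}\ref{item:weak-product-dense} is available, the argument is purely formal. The only points requiring a moment's care are invoking the induction hypothesis with the correct grouping of the factors — applying it to the single product $t$ of the first $n-1$ kernels rather than attempting to handle all factors simultaneously — and using the algebra hypothesis precisely to absorb $\Span\cF\cdot\cF$ back into $\cF$, which is exactly what turns the density of the span of products into the density of $\cF$.
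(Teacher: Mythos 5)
Your proof is correct and follows essentially the same route as the paper, which disposes of this corollary with the single remark that it follows by ``a repeated direct application'' of Proposition~\ref{prop:kernel-factored}\ref{item:weak-product-dense}. Your induction, grouping the first $n-1$ factors into $t$ and using the algebra hypothesis to absorb $\Span\cF\cdot\cF$ back into $\cF$, is precisely the formalization of that remark.
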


We close the paragraph with a few words about the vector-valued case. The 
reproducing kernel of a Hilbert spaces of vector-valued functions is an 
operator valued function of two variables with similar properties to the 
scalar-valued version.
\\ 
In this paper we shall be concerned with a very special case.  Given a scalar 
reproducing kernel $k$ on $\cX$, we shall consider the Hilbert space  
$\Hk\otimes\ell^2$, that is, the space of functions $f:\cX\to\ell^2$ such that \[
	f = \begin{pmatrix} f_0 \\ f_1 \\ \vdots \end{pmatrix},\, f_j\in\Hk, j\geq0,
	\qquad \|f\|_{\Hk\otimes\ell^2}^2 = \sum_{j\geq0}\|f_j\|_\Hk^2 < \infty.
\]
This Hilbert space has the operator-valued reproducing kernel
$(x,y)\mapsto I_{\ell^2}k(x,y)$. In other words, given 
$f\in \Hk\otimes\ell^2,~y\in \cX,~e\in \ell^2$, we have \[
	\la f(y),e\ra_{\ell^2} = \la f,k_ye\ra_{\Hk\otimes\ell^2}.
\]
A multiplier $\phi$ from $\Hk\otimes\ell^2$ to $\Hk$ is row-vector valued 
function such that $\phi f=\sum_{j\ge0}\phi_jf_j\in\Hk$ for every $f\in\Hk\otimes\ell^2$. 
We write $\phi\in\Mult(\Hk\otimes\ell^2,\Hk)$. In this case \eqref{mult-adjoint}  
becomes \begin{equation}\label{eq:eigenvec}
	M_\phi^*k_y = \phi(y)^*k_y.
\end{equation} 
Note that $\phi(y)^*$ can be interpreted as a column-vector. Finally, let us 
also mention that Proposition \ref{Multiplier criterion} extends verbatim to 
this context by simply replacing $\overline{\phi(y)}$ by $\phi(y)^*$.

\subsection{Some unitarily invariant kernels on the unit ball of $\bC^d$}\label{sec:szego-powers}
Let $d$ be a positive integer and \[
	\bB_d=\{z\in\bC^d:\|z\|<1\},
\]
be the open unit ball in $\bC^d$. A kernel $k$ on $\bB_d$ is invariant under 
the unitary group on $\bC^d$ if and only if $k(z,w)$ is a function of 
$\la z,w\ra.$

An important class of examples, see \cite{aleman_besov}, are reproducing 
kernels of radially weighted Besov spaces \[
	B_\omega^\beta
	=\left\{f\in \Hol(\bB_d): ~\int_{\bB_d}|R^\beta f|^2\omega dV<\infty\right\}.
\]
Here $R^\beta$ is a power of the radial derivative operator 
$R=\sum_{j=1}^dz_j\frac{\partial}{\partial z_j}$, $V=V_d$ is the Lebesgue measure 
on the ball, and $\omega$ is a radial integrable function on $\bB_d$. It is 
easy to verify that monomials form an orthogonal basis in these spaces.
The particular case $\beta=\frac{d}{2},\,\omega=1$ corresponds to the
kernel on $\Bd$, which is defined by \[
	s(z,w) \defeq \frac1{1 - \la z,w\ra}, \qquad z,w\in\Bd.
\]
When $d=1$ it is the reproducing kernel for the Hardy space $H^2$ on the unit
disk $\bD$ and $s$ is called the Szeg\H o kernel. For $d>1$, the 
corresponding space is called the Drury-Arveson space and then $s$ is called the
Drury-Arveson kernel. This space plays a key role 
in the theory of commuting operator tuples. The recent survey 
\cite{hartz_invitation} contains  a lot of interesting material in this direction.\\
For any $\beta>0$ observe that \[
	s(z,w)^\beta = \sum_{n=0}^\infty a_n\la z,w\ra^n,
\]
where $a_n\geq0$ therefore by Schur's product theorem $s^\beta$ is a 
reproducing kernel.
\begin{definition}
Let $\beta>0$ we write $\Hbeta$ for the space of holomorphic functions on
$\Bd$ that corresponds to the reproducing kernel \[
	s^\beta(z,w) \defeq \frac1{(1-\la z,w\ra)^\beta}
\]
\end{definition}
These spaces are extensively studied in the literature. If $\beta>d$, then 
$\cH_{s^\beta}$ is a (standard) weighted Bergman space on $\Bd$ as noted in 
the next paragraph.

\subsection{Weighted Bergman spaces on a domain of $\bC^d$}\label{sec:weighted-bergman}
Let $\Omega$ be a domain in $\bC^d$ and let $\mu$ be a regular Borel  
positive measure on $\Omega$ such that the analytic functions in $L^2(\mu)$ 
form a closed subspace where point-evaluations are continuous linear 
functionals. These spaces are usually called weighted Bergman spaces and we 
denote them by $L^2_a(\mu)$. By assumption the space $L^2_a(\mu)$ has a 
reproducing kernel which we denote by $k^\mu$, and, as pointed out before,  
the norm is given by \[
	\|f\|^2 = \int_\Omega |f(z)|^2d\mu(z),\quad f\in L_a^2(\mu). 
\]
If $\beta>d$, the spaces $\Hbeta$ from the previous paragraph satisfy 
$\Hbeta=L^2_a(\mu)$ with $\Omega = \bB_d$, 
$d\mu=c_\alpha(1-|z|^2)^\alpha dV$, where $\alpha=\beta-(d+1)$, and 
$c_\alpha$ is a normalizing constant.

The vector valued version $L^2_a(\mu)\otimes\ell^2$ is defined with help of 
the norm \[
	\|f\|^2 = \int_\Omega\|f(z)\|_{\ell^2}^2d\mu(z) < \infty.
\]
The following simple observation plays an important role for the sequel. We 
denote throughout by $V$ the Lebesgue measure on $\bC^d$.
\begin{proposition}\label{equivalent-norm}
If $L_a^2(\mu)$ is a reproducing kernel closed subspace of $L^2(\mu)$ then 
there exists a measure $\tilde{\mu}$ such that 
$L_a^2(\tilde{\mu})\otimes \ell^2=L_a^2(\mu)\otimes \ell^2$ and for each 
compact subset $K$ of $\Omega$ there exists $c(K)>0$ with
\begin{equation}\label{same Bergman}
	\tilde{\mu}|K\ge c(K)V|K. 
\end{equation}\end{proposition}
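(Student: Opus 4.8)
The plan is to obtain $\tilde\mu$ by adding to $\mu$ a suitably small multiple of Lebesgue measure on each member of an exhaustion of $\Omega$, the sizes being calibrated so that the $L^2(\tilde\mu)$- and $L^2(\mu)$-norms become equivalent on analytic functions. Everything hinges on one quantitative fact, which I would isolate first: \emph{for every compact $K\subset\Omega$ there is a constant $C(K)$ with $\sup_{z\in K}|f(z)|\le C(K)\|f\|_{L^2_a(\mu)}$ for all $f\in L^2_a(\mu)$.} This local sup-bound is the heart of the matter; the rest is bookkeeping.

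I would establish the local bound via the closed graph theorem. Since the elements of $L^2_a(\mu)$ are genuine holomorphic functions on $\Omega$, the inclusion $\iota\colon L^2_a(\mu)\to\Hol(\Omega)$ into the Fr\'echet space of holomorphic functions (with the topology of uniform convergence on compact subsets) is a well-defined linear map. Its graph is closed: if $f_n\to f$ in $L^2_a(\mu)$ and $f_n\to g$ locally uniformly, then continuity of point evaluations on $L^2_a(\mu)$ gives $f_n(z)\to f(z)$ for every $z\in\Omega$, while local uniform convergence gives $f_n(z)\to g(z)$, whence $f=g$. As $L^2_a(\mu)$ is a Hilbert space and $\Hol(\Omega)$ is Fr\'echet, the closed graph theorem forces $\iota$ to be continuous, which is exactly the displayed estimate.

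Next I would fix an exhaustion $\Omega_1\subset\Omega_2\subset\cdots$ of $\Omega$ by open sets with $\overline{\Omega_n}$ compact, $\overline{\Omega_n}\subset\Omega_{n+1}$, and $\bigcup_n\Omega_n=\Omega$. Set $B_n:=V(\overline{\Omega_n})\,C(\overline{\Omega_n})^2$, which is finite by compactness together with the local bound, and choose $\epsilon_n:=2^{-n}/(1+B_n)$. Define $\tilde\mu:=\mu+\sum_{n\ge1}\epsilon_n\,V|_{\overline{\Omega_n}}$. For the local domination, any compact $K$ lies in some $\Omega_N$, hence in $\overline{\Omega_n}$ for all $n\ge N$, so on $K$ we have $\tilde\mu\ge\bigl(\sum_{n\ge N}\epsilon_n\bigr)V$, which is the required inequality \eqref{same Bergman} with $c(K)=\sum_{n\ge N}\epsilon_n>0$.

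Finally I would check the norm equivalence. Trivially $\|f\|_{\tilde\mu}\ge\|f\|_\mu$. Conversely, the local bound gives $\int_{\overline{\Omega_n}}|f|^2\,dV\le V(\overline{\Omega_n})\bigl(\sup_{\overline{\Omega_n}}|f|\bigr)^2\le B_n\|f\|_\mu^2$, so $\sum_n\epsilon_n\int_{\overline{\Omega_n}}|f|^2\,dV\le\sum_n 2^{-n}\|f\|_\mu^2=\|f\|_\mu^2$ and hence $\|f\|_\mu^2\le\|f\|_{\tilde\mu}^2\le2\|f\|_\mu^2$. This shows $L^2_a(\tilde\mu)=L^2_a(\mu)$ with equivalent norms. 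The same constants $B_n$ work componentwise for $f=(f_j)_{j\ge0}$, since $\int_{\overline{\Omega_n}}\|f\|_{\ell^2}^2\,dV=\sum_j\int_{\overline{\Omega_n}}|f_j|^2\,dV\le B_n\sum_j\|f_j\|_\mu^2=B_n\|f\|_\mu^2$, so the identical estimate yields $L^2_a(\tilde\mu)\otimes\ell^2=L^2_a(\mu)\otimes\ell^2$, completing the proof. The only conceptually delicate point is the closed graph step; the remaining ingredients (existence of the exhaustion, finiteness of $V(\overline{\Omega_n})$, and $\sigma$-finiteness of the added measure) are routine.
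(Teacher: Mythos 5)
Your proof is correct, and at bottom it runs on the same engine as the paper's: perturb $\mu$ by an absolutely continuous measure whose density is damped wherever point evaluations are large, so that the added term contributes at most a fixed multiple of $\|f\|_{L^2(\mu)}^2$ for analytic $f$. The difference lies in how the damping is organized and justified. The paper does it in one stroke, setting $d\tilde\mu = d\mu + \frac{v(z)}{1+\|k^\mu_z\|^2}\,dV$ for a fixed $v\in L^1(V)$ with $v>0$ a.e.\ and positive essential infimum on compacts; the reproducing-kernel bound $\|f(z)\|_{\ell^2}^2\le(1+\|k^\mu_z\|^2)\|f\|^2$ then yields the norm comparison immediately, and \eqref{same Bergman} holds because $\sup_{z\in K}\|k^\mu_z\|^2<\infty$. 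You instead take a step-function density $\sum_n\epsilon_n\chi_{\overline{\Omega_n}}$ over an exhaustion, calibrated by a geometric series against the local sup constants $C(K)$ obtained from the closed graph theorem. These are the same quantitative fact in two guises --- your $C(K)$ is exactly $\sup_{z\in K}\|k^\mu_z\|$ --- but your route has the advantage of proving, rather than tacitly assuming, the local boundedness of point evaluations, which the paper's argument needs in order to conclude that the essential infimum of its density over a compact set is positive; what the paper's route buys is brevity and a density given by a single closed formula, with no exhaustion or series bookkeeping. Your componentwise treatment of the vector-valued case is also sound and matches the paper, whose key inequality is stated directly for $f\in L^2_a(\mu)\otimes\ell^2$.
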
 
\begin{proof} 
Choose $v\in L^1(V), ~v>0$ a.e. with $\text{essinf } v|K>0$ for any  
compact subset $K$ of $\Omega$ and set \[
	d\tilde{\mu}=d\mu +\frac{v(z)}{1+\|k^\mu_z\|^2}dV.
\]
Since for every $f\in L_a^2(\mu)\otimes \ell^2$ and $z\in \Omega$ we have \[
	\frac{\|f(z)\|_{\ell^2}^2}{1+\|k^\mu_z\|^2}\le \|f\|,
\]
we obtain
$L_a^2(\tilde{\mu})\otimes \ell^2=L_a^2(\mu)\otimes \ell^2$ and that the 
norms are comparable.
\end{proof} 
 
One verifies that $\Mult(L^2_a(\mu))$ equals the algebra of bounded 
analytic functions on $\Omega$ with equality of norms. Similarly, 
$\Mult(L^2_a(\mu)\otimes\ell^2,L^2_a(\mu))$ consists of bounded analytic 
row-vectors and the multiplier norm coincides with the supremum norm. For our 
purposes it is important to note that these multipliers never attain their 
norm unless the symbols are constant. Indeed, if $\|bf\|=\|b\|_\infty\|f\|$ 
for some non-zero $f\in L^2_a(\mu)\otimes\ell^2$, then 
$\|b\|_\infty=\|b(z)\|_{\ell^2}$ for all $z\in \Omega$, which easily implies 
that $b$ is constant. In the terminology of \cite{aleman_freeouter} these 
Bergman spaces have no non-trivial \emph{sub-inner} multipliers. This is in 
strong contrast to smaller reproducing kernel spaces on the ball. In fact, 
more than that is true for measures  $\mu$ as above.
\begin{proposition}\label{pointev}
Let $b:\Omega\to \ell^2$ be analytic non-constant and such that $b(z)$ is a 
row-contraction for every $z\in \Omega$. If $\mu$ satisfies 
\eqref{same Bergman}, then for every $z\in \Omega$ there exists $c_z>0$ such 
that \[
	\|f\|^2-\|bf\|^2\ge c_z\|f(z)\|_{\ell^2}^2,
\]
for all $f\in L^2_a(\mu)\otimes\ell^2$.
\end{proposition}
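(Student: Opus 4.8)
The plan is to estimate the defect $\|f\|^2-\|bf\|^2$ from below pointwise, integrate, and then descend to the value $\|f(z)\|_{\ell^2}^2$ via the sub-mean value inequality for subharmonic functions together with \eqref{same Bergman}. I keep the base point called $z$, as in the statement, and write $\zeta$ for the integration variable. First I would record the pointwise Cauchy--Schwarz bound: since $(bf)(\zeta)=\sum_j b_j(\zeta)f_j(\zeta)$ satisfies $|(bf)(\zeta)|\le\|b(\zeta)\|_{\ell^2}\|f(\zeta)\|_{\ell^2}$, we get
\[
	\|f\|^2-\|bf\|^2
	=\int_\Omega\bigl(\|f(\zeta)\|_{\ell^2}^2-|(bf)(\zeta)|^2\bigr)\,d\mu(\zeta)
	\ge\int_\Omega\bigl(1-\|b(\zeta)\|_{\ell^2}^2\bigr)\|f(\zeta)\|_{\ell^2}^2\,d\mu(\zeta).
\]
It then suffices to bound the right-hand side below by $c_z\|f(z)\|_{\ell^2}^2$.

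The crucial step is to see that the defect weight $1-\|b\|_{\ell^2}^2$ is not merely nonnegative but \emph{strictly} positive throughout the interior. The function $\|b\|_{\ell^2}^2=\sum_j|b_j|^2$ is a sum of subharmonic functions, hence subharmonic on the connected set $\Omega$, and it is bounded by $1$. It is moreover non-constant: if $\|b\|_{\ell^2}^2$ were constant it would be harmonic, and since $\sum_j\Delta|b_j|^2=\Delta\|b\|_{\ell^2}^2=0$ with each summand $\Delta|b_j|^2=4\sum_k|\partial_{z_k}b_j|^2\ge0$, each $|b_j|^2$ would be harmonic, forcing every $b_j$ (hence $b$) to be constant, contrary to hypothesis. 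By the maximum principle a non-constant subharmonic function does not attain its supremum at an interior point, so $\|b(\zeta)\|_{\ell^2}<1$ for every $\zeta\in\Omega$; in particular $1-\|b(z)\|_{\ell^2}^2>0$. By continuity I may fix $r>0$ with $\overline{B(z,r)}\subset\Omega$ and
\[
	1-\|b(\zeta)\|_{\ell^2}^2\ge\delta\defeq\tfrac12\bigl(1-\|b(z)\|_{\ell^2}^2\bigr)>0,\qquad \zeta\in B(z,r).
\]

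With this in hand I would localize and descend to Lebesgue measure. Restricting the integral to $B(z,r)$, using the lower bound $\delta$ on the weight there, and then \eqref{same Bergman} with $K=\overline{B(z,r)}$,
\[
	\int_\Omega\bigl(1-\|b\|_{\ell^2}^2\bigr)\|f\|_{\ell^2}^2\,d\mu
	\ge\delta\int_{B(z,r)}\|f\|_{\ell^2}^2\,d\mu
	\ge\delta\,c(K)\int_{B(z,r)}\|f(\zeta)\|_{\ell^2}^2\,dV(\zeta).
\]
Since $\|f(\zeta)\|_{\ell^2}^2=\sum_j|f_j(\zeta)|^2$ is a sum of subharmonic functions, applying the scalar sub-mean value inequality to each $|f_j|^2$ and summing by Tonelli's theorem gives $\int_{B(z,r)}\|f(\zeta)\|_{\ell^2}^2\,dV(\zeta)\ge V(B(z,r))\,\|f(z)\|_{\ell^2}^2$. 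Chaining the displays yields the assertion with $c_z=\delta\,c(K)\,V(B(z,r))$.

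I expect the only genuine obstacle to be the strict interior inequality $\|b(\zeta)\|_{\ell^2}<1$; everything else is routine integration and the mean value property. It is precisely the non-constancy of $b$, fed through the maximum principle for the subharmonic function $\|b\|_{\ell^2}^2$, that upgrades the a priori merely nonnegative defect weight into a weight bounded below on a neighborhood of $z$. Were $\|b\|_{\ell^2}$ allowed to equal $1$ at interior points, the local lower bound $\delta$ would collapse and the passage to $\|f(z)\|_{\ell^2}^2$ would fail; this is exactly the mechanism excluded here and reflects the remark, made earlier in \S\ref{sec:weighted-bergman}, that such Bergman multipliers never attain their norm unless constant.
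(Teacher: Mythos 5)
Your proof is correct and takes essentially the same route as the paper's: pointwise Cauchy--Schwarz to produce the weight $1-\|b(\zeta)\|_{\ell^2}^2$, a positive lower bound for this weight on a ball around $z$, the comparison \eqref{same Bergman} between $\mu$ and Lebesgue measure on that ball, and the sub-mean value inequality for $\|f\|_{\ell^2}^2$, yielding the same constant $c_z$ up to labeling. The only difference is that you make explicit, via subharmonicity of $\|b\|_{\ell^2}^2$ and the maximum principle, the strict interior bound $\|b(\zeta)\|_{\ell^2}<1$, which the paper invokes implicitly with the phrase ``by the assumption on $b$''.
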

\begin{proof} 
Let $z\in \Omega$ and $B_z$ be an open ball in $\bC^d$ centered at $z$  with 
$\overline{B_z}\subset \Omega$. By the assumption on $b$ there is $a_z>0$ such 
that $1-\|b(\zeta)\|_{\ell^2}^2 \ge a_z$, $\zeta\in B_z$. This implies for 
each $e\in \ell^2$, \[
	\|e\|_{\ell_2}^2-|b(\zeta)e|^2
	\ge (1-\|b(\zeta)\|_{\ell^2}^2)\|e\|_{\ell^2}^2
	\ge a_z\|e\|_{\ell^2}^2.
\]
Using also \eqref{same Bergman} we have for $f\in L^2_a(\mu)\otimes\ell^2$  
\begin{align*}
	\|f(z)\|_{\ell^2}^2
	&\le \frac1{V(B_z)}\int_{B_z}\|f\|_{\ell^2}dV
	\\
	&\le  \frac1{V(B_z)c(\overline{B_z})}\int_{B_z}\|f\|_{\ell^2}^2d\mu
	\\
	&\le  \frac1{V(B_z)c(\overline{B_z})a_z}\int_{B_z}(\|f\|_{\ell^2}^2-|bf|^2)d\mu 
	\\
	&\le\frac{\|f\|^2-\|bf\|^2}{V(B_z)c(\overline{B_z})a_z},
\end{align*}
which completes the proof.
\end{proof}

\section{Basic properties of generalized de Branges-Rovnyak spaces} 
Let us recall the general notion of a de Branges-Rovnyak space pointed out in 
the Introduction. Given a scalar-valued reproducing kernel $k$ on the 
non-empty set $\cX$ let $\cH_k$ denote the  corresponding Hilbert function space. 
\begin{definition}
Let $b\in\Mult(\cH_k\otimes\ell^2, \cH_k)$ with $\|M_b\|\leq1$. 
The sub-$\cH_k$ space, or the de Branges-Rovnyak space with respect to 
$\cH_k$ and $b$ is the Hilbert space corresponding to the reproducing kernel \[
	k^b(x,y) \defeq k(x,y)(1-b(x)b(y)^*),
	\qquad b(x)b(y)^* = \la b(x),b(y)\ra_{\ell^2},
\]
which is positive due to the multiplier criterion. We denote this space by 
$\cH_k(b)$.
\end{definition}
One can verify that as a set, $\cH_k(b)$ is equal to 
$\Ran(I-M_bM_b^*)^\frac12$ and its norm satisfies \[
	\la (I-M_bM_b^*)^\frac12f,(I-M_bM_b^*)^\frac12g\ra_{\cH_k(b)}
	= \la f,g\ra_{\cH_k},
\]
whenever $f,g\in\Ran(I-M_bM_b^*)^\frac12=\cH_k\ominus\ker(I-M_bM_b^*)^\frac12.$
Furthermore, $\cH_k(b)$ is contractively contained in $\cH_k$, this can be 
seen either from the above formula, or the fact that \[
	k(z,w) - k^b(z,w) = b(x)b(y)^*k(x,y),
\]
which is positive by Schur's product theorem.  

\subsection{Examples via operator inequalities}
It is well known that the backward shift \[
	Lf(z) = \frac{f(z)-f(0)}{z},
\]
is a contraction on any de  Branges-Rovnyak space $\cH(b)=\cH_s(b)$,  with 
$s(z,w)=\frac1{1-\overline{w}z}$ and $b(0)=0$.  The starting point of the 
work in \cite{aleman_conbackshift} is the observation that the converse holds 
true as well, as the following result shows.
\begin{proposition}[\cite{aleman_conbackshift}]\label{thm:backshift-inequality}
Let $\cH$ be a Hilbert space of analytic functions on $\bD$ with a reproducing 
kernel $k$ normalized at $0$. $L$ is contractive on $\cH$ if and only if there
exists an analytic row-contraction $b$ with $b(0)=0$ such that
\[
	k(z,w) = \frac{1-b(z)b(w)^*}{1-z\overline w}.
\]
\end{proposition}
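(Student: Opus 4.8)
The plan is to translate the operator inequality $\|L\|\le1$ into positivity of a kernel and then read off $b$ by factoring that kernel. Since $\|L\|=\|L^*\|$, contractivity of $L$ is equivalent to $I-LL^*\ge0$, and because the kernels $\{k_w:w\in\bD\}$ span a dense subspace this is in turn equivalent to positivity of the kernel $(v,w)\mapsto\langle(I-LL^*)k_w,k_v\rangle$. First I would compute $L^*$ on reproducing kernels. Using the reproducing property together with the normalization $k(\cdot,0)\equiv1$ (so that $k_0\equiv1$), the identity $(Lf)(w)=\frac{f(w)-f(0)}{w}=\frac1w(\langle f,k_w\rangle-\langle f,k_0\rangle)$ gives $L^*k_w=\frac1{\bar w}(k_w-k_0)$ for $w\neq0$.

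Next I would evaluate the kernel explicitly. A direct computation, again using $k(\cdot,0)\equiv k(0,\cdot)\equiv1$ and $k(0,0)=1$, yields $\langle k_w-k_0,k_v-k_0\rangle=k(v,w)-1$, hence $\langle LL^*k_w,k_v\rangle=\frac{k(v,w)-1}{v\bar w}$ and
\[
	\langle(I-LL^*)k_w,k_v\rangle=k(v,w)-\frac{k(v,w)-1}{v\bar w},\qquad v,w\neq0.
\]
Multiplying by the rank-one positive kernel $(v,w)\mapsto v\bar w$ and invoking Schur's product theorem, I obtain that
\[
	P(v,w)\defeq1-k(v,w)(1-v\bar w)=v\bar w\,\langle(I-LL^*)k_w,k_v\rangle\gg0
\]
whenever $L$ is contractive. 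The points where $v=0$ or $w=0$ cause no trouble: the normalization forces $P(\cdot,0)\equiv P(0,\cdot)\equiv0$, so in any finite Gram matrix the rows and columns indexed by $0$ vanish and positivity reduces to the case of nonzero points, where the displayed identity applies.

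It remains to factor $P$. Since $k(v,w)$ is holomorphic in $v$ and conjugate-holomorphic in $w$, the same is true of $P$, so its reproducing kernel space $\cH_P$ is a separable Hilbert space of holomorphic functions on $\bD$ (norm convergence forces locally uniform convergence through the estimate $|f(v)|^2\le\|f\|^2P(v,v)$, and $P(v,v)$ is locally bounded). Choosing an orthonormal basis $\{e_n\}$ of $\cH_P$ and setting $b=(e_0,e_1,\dots)$ gives an analytic $\ell^2$-valued row function with $b(v)b(w)^*=\sum_n e_n(v)\overline{e_n(w)}=P(v,w)$. Then $\|b(v)\|_{\ell^2}^2=P(v,v)=1-k(v,v)(1-|v|^2)\le1$, so $b$ is a row contraction, and $\|b(0)\|^2=P(0,0)=1-k(0,0)=0$, so $b(0)=0$. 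Rearranging $P=bb^*$ gives exactly $k(v,w)=\frac{1-b(v)b(w)^*}{1-v\bar w}$.

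For the converse implication one reverses this computation: if $k$ has the stated form then $P=bb^*\gg0$, and since $b(0)=0$ the row function $b(v)/v$ is analytic, whence $\langle(I-LL^*)k_w,k_v\rangle=\frac{b(v)}{v}\frac{b(w)^*}{\bar w}\gg0$ and $L$ is contractive. The main obstacle is the factorization step: one must ensure that the feature map of $P$ can be chosen \emph{holomorphic}, i.e. that $\cH_P$ genuinely consists of holomorphic functions, rather than merely producing an abstract Kolmogorov decomposition; the algebraic manipulations leading to $P$ are otherwise routine.
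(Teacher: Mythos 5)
The paper itself does not prove this proposition; it is quoted directly from \cite{aleman_conbackshift}, so there is no in-paper argument to compare yours against line by line. Judged on its own, your proof of the forward implication is correct and essentially the standard one. The reduction of $\|L\|\le 1$ to positivity of the kernel $(v,w)\mapsto\langle(I-LL^*)k_w,k_v\rangle$, the formula $L^*k_w=(k_w-k_0)/\bar w$, the Schur multiplication by the rank-one kernel $v\bar w$, and the disposal of the rows and columns at the origin are all sound. You also correctly isolated the one genuinely nontrivial point, namely that the factorization of $P(v,w)=1-k(v,w)(1-v\bar w)$ must be chosen \emph{holomorphic}, and your resolution is the right one: the sections $P_w$ are holomorphic, $0\le P(v,v)\le 1$, so norm convergence in $\cH_P$ implies locally uniform convergence, hence $\cH_P$ consists of holomorphic functions, is separable, and an orthonormal basis yields an analytic row contraction with $b(0)=0$.

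The converse, as literally written, is circular: the expression $\langle(I-LL^*)k_w,k_v\rangle$ presupposes that $L$ is a bounded operator from $\cH$ into $\cH$, so that $L^*$ exists --- but that is exactly what is to be proved, and in a general reproducing kernel space of analytic functions $L$ need not even map the space into itself (the paper's own Example (b) in \S 3.2 gives a space not invariant under $L$). The repair is routine and uses precisely your identity. Define $T$ on the dense subspace $\Span\{k_w: w\in\bD\setminus\{0\}\}$ by $Tk_w=(k_w-k_0)/\bar w$. Then for $x=\sum_i c_i k_{w_i}$ with all $w_i\neq 0$,
\[
	\|x\|^2-\|Tx\|^2
	=\sum_{i,j}\frac{c_i\bar c_j}{\bar w_i w_j}\,P(w_j,w_i)
	=\sum_{i,j}\frac{c_i\bar c_j}{\bar w_i w_j}\,b(w_j)b(w_i)^*\ \ge 0,
\]
so $T$ extends to a contraction on $\cH$. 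Finally, $(T^*f)(w)=\langle f,Tk_w\rangle=\frac{f(w)-f(0)}{w}=(Lf)(w)$ for $w\neq0$, and since $T^*f$ and $Lf$ are both analytic and agree off the origin, $Lf=T^*f\in\cH$ for every $f\in\cH$; hence $L=T^*$ is everywhere defined and contractive. With this short repair your argument is a complete and correct proof of the proposition.
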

For reproducing kernel Hilbert spaces of analytic functions in $\bD$ having 
the identity function $z$ as a multiplier and a reproducing kernel normalized 
at the origin,  the contractivity of $L$ is equivalent to the fact that the 
forward shift $M_z$ is expansive, that is, $\|zf\|\ge\|f\|$. In particular, 
such spaces have a general de Branges-Rovnyak kernel as above. 

There is a very recent extension of this idea to the context of several 
complex variables due to Sautel \cite{sautel}. It uses 
the Drury-Arveson kernel $s(z,w) = \frac1{1-\la z,w\ra}$
instead of the Szeg\"o kernel. The proof is much more involved in this case.
\begin{theorem}
Let $T=(T_1,\ldots,T_d):\cH^d\to\cH$ be a bounded row operator on an infinite
dimensional Hilbert space $\cH$, and let $m\in\bN$. T is unitarily equivalent
to $(M_z, \cH_s(b))$ for some $b\in\Mult(\cH_s\otimes\mathcal{D}, \cH_s\otimes\mathcal{E})$
and Hilbert spaces $\mathcal{D}$ and $\mathcal{E}$ such that $b(0) = 0$, 
$\dim\mathcal E = m$, and that $M_z$ is a bounded row operator if and only if $T$
satisfies the following four conditions:
\begin{enumerate}[label=(\roman*)]
	\item	$T_iT_j = T_jT_i$ for all $1\leq i,j\leq d$,
	\item	$\dim([\Ran(T-\lambda)]^\perp) = m$ for every $\lambda\in\Bd$,
	\item	$\|Tx\|_\cH\geq\|x\|_{\cH^d}$ for all $x\in\Ker(T)^\perp$,
	\item	$\bigcap\limits_{n\geq0}\sum\limits_{|w|=n}T_w\cH = \{0\}$.
\end{enumerate}
\end{theorem}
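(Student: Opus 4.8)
The plan is to read the statement as a functional-model theorem and prove the two implications separately: the forward (necessity) direction is a direct verification from the model, while the reverse (sufficiency) direction is the genuine construction.

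\emph{Necessity.} Suppose $T$ is unitarily equivalent to $(M_z,\cH_s(b))$ as stated. Condition (i) is immediate, since the coordinate multipliers $M_{z_i}$ and $M_{z_j}$ commute. For (ii), recall from \eqref{eq:eigenvec} that the kernel functions are joint eigenvectors: $M_{z_j}^*(k^b_\lambda e)=\bar\lambda_j\, k^b_\lambda e$ for $e\in\cE$, so $\Ker((M_z-\lambda)^*)=\bigcap_j\Ker(M_{z_j}^*-\bar\lambda_j)$ contains $\{k^b_\lambda e: e\in\cE\}$. Since $b(0)=0$ forces a Schwarz-type bound $\|b(\lambda)\|<1$ on $\Bd$, the operator $I_\cE-b(\lambda)b(\lambda)^*$ is invertible, these eigenvectors are linearly independent and exhaust the joint eigenspace, giving dimension $\dim\cE=m$. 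Condition (iii), the expansivity of the row operator $M_z$ on $\Ker(M_z)^\perp$, is the several-variable counterpart of the equivalence in Proposition~\ref{thm:backshift-inequality} between contractivity of the backward shift and expansivity of the forward shift; it encodes that $s$ is the Drury--Arveson kernel, whose reciprocal $1/s=1-\la z,w\ra$ governs the defect of the backward-shift row. Finally (iv) is the purity of the model: inside $\cH_s(b)\subseteq\cH_s\otimes\cE$ the tail $\bigcap_n\sum_{|w|=n}M_w\cH$ is trivial, exactly as for the Drury--Arveson space itself.

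\emph{Sufficiency.} This is the model construction, following the scheme of \cite{aleman_conbackshift,sautel}. Set $\cE=\Ker(T^*)=[\Ran T]^\perp$, so $\dim\cE=m$ by (ii) at $\lambda=0$. For each $\lambda\in\Bd$ condition (ii) supplies an $m$-dimensional joint eigenspace $\Ker((T-\lambda)^*)=\bigcap_j\Ker(T_j^*-\bar\lambda_j)$, and, since the $T_j$ commute and $\Ran(T-\lambda)$ is closed by the lower bound in (iii), this admits an anti-holomorphic frame $\lambda\mapsto\gamma_1(\lambda),\dots,\gamma_m(\lambda)$, normalized so that $\gamma_1(0),\dots,\gamma_m(0)$ is an orthonormal basis of $\cE$. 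Define the model map $W:\cH\to\Hol(\Bd)\otimes\cE$ by $(Wx)(\lambda)=\sum_{i=1}^m\la x,\gamma_i(\lambda)\ra\epsilon_i$. From $T_j^*\gamma_i(\lambda)=\bar\lambda_j\gamma_i(\lambda)$ one obtains $\la T_jx,\gamma_i(\lambda)\ra=\lambda_j\la x,\gamma_i(\lambda)\ra$, hence $WT_j=M_{z_j}W$, so $W$ intertwines $T$ with $M_z$. Injectivity of $W$ is precisely where (iv) enters: a vector killed by $W$ is orthogonal to every joint eigenvector, hence lies in $\bigcap_\lambda\overline{\Ran(T-\lambda)}$, which is contained in the trivial tail of (iv).

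It remains to identify the range. Equipping $\overline{W\cH}$ with the transported norm makes $W$ unitary onto a reproducing kernel space whose $\cB(\cE)$-valued kernel is the Gram kernel $K(\lambda,\mu)=[\la\gamma_j(\mu),\gamma_i(\lambda)\ra]_{i,j}$, and the frame normalization gives $K(\lambda,0)=I_\cE$. The expansivity (iii) translates, via the Drury--Arveson reciprocal $1/s=1-\la z,w\ra$, into the contractive containment of this space in $\cH_s\otimes\cE$, i.e. $s\cdot I_\cE-K\gg0$. Because $s$ is a \emph{complete Nevanlinna--Pick} kernel, such contractive containment yields a Leech-type factorization $K=s(I_\cE-b\,b^*)$ with $b\in\Mult(\cH_s\otimes\mathcal{D},\cH_s\otimes\cE)$ a contractive multiplier, and the normalization $K(\cdot,0)=I_\cE$ forces $b(0)=0$ (see \cite{agler-mccarthy,shimorin_leech}). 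Hence $K=k^b$, so $W$ is unitary from $\cH$ onto $\cH_s(b)$; the lower bound in (iii) and the containment in $\cH_s\otimes\cE$ show $M_z$ is bounded below and above, so it is a bounded row operator.

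\emph{Main obstacle.} The crux is this last step: extracting from the abstract Gram kernel $K$ an \emph{honest} contractive multiplier $b$ with $b\,b^*=I_\cE-K/s$. In one variable this is handled by explicit inner--outer factorization in $H^\infty$; in several variables no such factorization is available, and one must instead exploit the complete Nevanlinna--Pick property of the Drury--Arveson kernel together with the structure of left inverses of the coordinate row operator, which is exactly the difficulty flagged in the Introduction and carried out in \cite{sautel}. The secondary delicate points are constructing the anti-holomorphic eigenvector frame and arranging the precise normalization that produces $b(0)=0$.
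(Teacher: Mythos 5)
First, a point of reference: the paper gives \emph{no} proof of this theorem. It is quoted from Sautel's thesis \cite{sautel}, with the remark that the proof is ``much more involved'' than its one-variable antecedent (Proposition \ref{thm:backshift-inequality}). So your proposal can only be judged against the statement itself and the known one-variable mechanism, not against an in-paper argument. Judged that way, your necessity half is plausible in outline, though two of its steps are asserted rather than proved: that the kernel functions $k^b_\lambda e$ \emph{exhaust} $\Ker((M_z-\lambda)^*)$ is equivalent to a Gleason-type statement ($\overline{\Ran(M_z-\lambda)}$ equals the functions in $\cH_s(b)$ vanishing at $\lambda$), which is genuinely nontrivial in $\cH_s(b)$; and row expansivity of $M_z$ on $\Ker(M_z)^\perp$ in $\cH_s(b)$ requires a computation, not an appeal to analogy.

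The sufficiency half, however, breaks at exactly the step you label the crux, and it breaks because the step is \emph{false}, not merely unproved. You pass from (iii) to the contractive containment $s\,I_\cE-K\gg0$ of the model space in $\cH_s\otimes\cE$, and then claim that the complete Nevanlinna--Pick property of $s$ upgrades contractive containment to a factorization $K=s(I_\cE-bb^*)$. This fails already for $d=1$, $\cE=\bC$: take $K(z,w)=\frac1{1-z\bar w}-z\bar w$. Then $K\gg0$ (its Taylor coefficients in $z\bar w$ are nonnegative), $K(z,0)=1$, and $s-K=z\bar w\gg0$, so $\cH_K$ is a normalized kernel space contractively contained in $H^2$; yet $1-(1-z\bar w)K(z,w)=z\bar w(1-z\bar w)$ is \emph{not} a positive kernel (at $z_1=0.9$, $z_2=0.4$ the $2\times2$ matrix has negative determinant), so no function $b$ whatsoever --- let alone a contractive multiplier --- satisfies $K=s(1-bb^*)$. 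Contractive containment in $\cH_s\otimes\cE$ is strictly weaker than membership in the de Branges--Rovnyak class, and no Leech-type theorem bridges the gap; by reducing (iii) to containment you threw away the very information the theorem turns on. The correct mechanism, visible in the proof of Proposition \ref{thm:backshift-inequality} and carried out in several variables in \cite{sautel}, is to apply the operator inequality (iii) \emph{hereditarily} to the model operator, i.e.\ to a suitable left inverse of the row $M_z$ on $\Ker(M_z)^\perp$, so as to obtain positivity of the specific defect kernel $I_\cE-(1-\la z,w\ra)K(z,w)$; one then Kolmogorov-factors that kernel as $b(z)b(w)^*$ (the normalization $K(\cdot,0)=I_\cE$ giving $b(0)=0$), and finally the positivity of $K=s(I_\cE-bb^*)$ itself, via the vector-valued form of Proposition \ref{Multiplier criterion}, shows $b$ is a contractive multiplier. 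Handling the left inverses of the row operator is precisely the difficulty the paper flags in its Introduction. Two further, repairable gaps: (iii) gives closedness of $\Ran T$ but says nothing directly about $\Ran(T-\lambda)$ for $\lambda\neq0$, which your anti-holomorphic frame construction needs; and the inclusion $\bigcap_{\lambda}\overline{\Ran(T-\lambda)}\subseteq\bigcap_{n}\sum_{|w|=n}T_w\cH$, on which your injectivity argument for $W$ rests, is asserted without proof.
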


Sub-Bergman spaces are also related to an operator inequality which is 
considerably more subtle. It is due to S. Shimorin \cite{shimorin_wold} and 
it asserts that on the (unweighted) Bergman space on $\bD$, the forward shift 
$M_zf=zf$ satisfies 
\begin{equation}\label{shimorin-ineq}
\|zf+g\|^2\leq2(\|f\|^2+\|zg\|^2).
\end{equation}
As the previous inequalities, \eqref{shimorin-ineq} is \emph{hereditary} in 
the sense that it continues to hold for restrictions of the operator involved 
to its invariant subspaces. This estimate has far-reaching consequences 
regarding invariant subspaces of the Bergman shift. A crucial fact 
(see also \cite{AHR}) is that \eqref{shimorin-ineq}  is closely related to 
sub-Bergman kernels in the following way.
\begin{proposition}[Shimorin's inequality ]
Let $\cH=\Hk$ be a Hilbert space of analytic functions on $\bD$ such that 
the  reproducing kernel $k$ is normalized at $0$,  $M_z$ and $L$ are bounded 
on $\cH$. Then  the inequality \eqref{shimorin-ineq} holds on $\cH$ if and 
only if there exists an analytic row-contraction $b$ with $b(0)=0$ such that 
\begin{equation}\label{shimorin-kernel}
	k(z,w) = \frac{1-b(z)b(w)^*}{(1-z\overline w)^2},
\end{equation}
\end{proposition}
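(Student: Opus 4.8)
The plan is to reduce Shimorin's inequality to a single positivity statement for the kernel $1-(1-z\overline w)^2k(z,w)$ and then to factor that kernel. Let me first dispose of the purely kernel-theoretic half. If $k(z,w)=\frac{1-b(z)b(w)^*}{(1-z\overline w)^2}$ with $b$ a row contraction and $b(0)=0$, then $b(z)b(w)^*=1-(1-z\overline w)^2k(z,w)\gg0$ is forced. Conversely, suppose $P(z,w):=1-(1-z\overline w)^2k(z,w)\gg0$. I would factor it as $P(z,w)=\la b(z),b(w)\ra_{\ell^2}$ via the feature map of its reproducing kernel space; since $P$ is holomorphic in $z$ and anti-holomorphic in $w$, the map $b$ is holomorphic, and evaluating at $w=0$ together with the normalization $k(z,0)=1$ gives $b(z)b(0)^*=1-k(z,0)=0$, whence $b(0)=0$, while on the diagonal $\|b(z)\|_{\ell^2}^2=1-(1-|z|^2)^2k(z,z)\le1$. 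Finally, since $\frac{1}{(1-z\overline w)^2}\bigl(1-b(z)b(w)^*\bigr)=k(z,w)\gg0$, the vector-valued multiplier criterion (Proposition~\ref{Multiplier criterion}) shows that $b$ is automatically a contractive multiplier of the Bergman space $\cH_{s^2}$, so $k$ is genuinely of the asserted form. Thus the existence of $b$ is equivalent to the single condition $1-(1-z\overline w)^2k\gg0$.

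Next I would translate this kernel positivity into operator language. Using $M_z^*k_w=\overline w\,k_w$ one checks that the operators $M_zM_z^*$, $M_z^2M_z^{*2}$ and the orthogonal projection $P_0$ onto the constants have reproducing kernels (in the sense $\la A k_w,k_z\ra$) equal to $z\overline w\,k$, $z^2\overline w^2k$ and $1$, respectively. Expanding $(1-z\overline w)^2=1-2z\overline w+z^2\overline w^2$ and using the density of $\{k_w:w\in\bD\}$, the positivity $1-(1-z\overline w)^2k\gg0$ becomes equivalent to the operator inequality
\[
	2M_zM_z^*-M_z^2M_z^{*2}\ \ge\ I-P_0
\]
on $\cH$; testing on $x$ and setting $y=(I-P_0)x$ (so that $M_z^*x=M_z^*y$) this reads $2\|M_z^*y\|^2\ge\|y\|^2+\|M_z^{*2}y\|^2$ for every $y$ with $y(0)=0$. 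Call this condition (K).

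The crux is to match (K) with Shimorin's inequality. Expanding the squares, \eqref{shimorin-ineq} is exactly the positivity of the $2\times2$ operator matrix
\[
	\begin{pmatrix} 2I-M_z^*M_z & -M_z^* \\ -M_z & 2M_z^*M_z-I\end{pmatrix}\ \ge\ 0
\]
on $\cH\oplus\cH$. The difficulty is that this matrix is written in the ``analytic'' order $M_z^*M_z$, which has no transparent action on kernel functions, whereas (K) lives in the ``co-analytic'' order $M_zM_z^*$ that is the only one visible on the $k_w$. The bridge is the boundedness of $L$: the identities $LM_z=I$ and $M_zL=I-P_0$ exhibit $M_z$ as a bounded-below bijection of $\cH$ onto $\{f:f(0)=0\}$, and for such an operator a Schur-complement computation converts the matrix positivity into precisely (K). This is the second-order analogue of the first-order mechanism behind Proposition~\ref{thm:backshift-inequality}, where the same bijectivity yields ``$L$ contractive $\iff M_z^*M_z\ge I\iff M_zM_z^*\ge I-P_0$''. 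I expect the real work to sit here: carrying out the Schur complement despite the inverses it introduces, and checking that the two diagonal blocks $2I-M_z^*M_z\ge0$ and $2M_z^*M_z-I\ge0$ are themselves forced by (K). In the weighted-shift model this is transparent, since there the $2\times2$ block decouples and its determinant is exactly the (K)-expression $(2-\lambda_m^2)\lambda_{m+1}^2-1$, the diagonal conditions following automatically; the general case is the operator-theoretic upgrade of this computation.

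Combining the three steps gives the stated equivalence in both directions at once. As a sanity check and an alternative viewpoint on the ``if'' direction, note that once the equivalence of \eqref{shimorin-ineq} with (K) is in hand, the implication ``kernel form $\Rightarrow$ Shimorin'' also reflects the \emph{hereditary} nature of the inequality: it holds on the Bergman space $\cH_{s^2}$ and is inherited by every sub-Bergman space $\cH_{s^2}(b)$, which is precisely the content of $1-(1-z\overline w)^2k\gg0$.
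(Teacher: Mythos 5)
Your three reductions are correct and cleanly done: (a) the existence of the row contraction $b$ with $b(0)=0$ is equivalent to $1-(1-z\overline w)^2k\gg0$; (b) that positivity is equivalent to your condition (K), $2M_zM_z^*-M_z^2M_z^{*2}\ge I-P_0$, i.e. $2\|M_z^*y\|^2\ge\|y\|^2+\|M_z^{*2}y\|^2$ for $y(0)=0$; (c) \eqref{shimorin-ineq} is exactly positivity of your $2\times2$ block matrix. But the proof stops precisely where the proposition lives: the equivalence of the block-matrix positivity (written in the analytic order $M_z^*M_z$) with (K) (written in the co-analytic order $M_zM_z^*$) is never established. You flag this yourself (``I expect the real work to sit here''), verify it only for diagonal weighted shifts, and propose a Schur-complement computation. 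That plan meets a concrete obstruction: a Schur complement requires inverting a diagonal block, and the block $2M_z^*M_z-I$ is singular already in the model case of the unweighted Bergman shift, where $2\|z\cdot1\|^2=\|1\|^2$; so one would need generalized (Douglas-type) complements together with a genuinely non-formal conversion of $M_z^*M_z$-expressions into $M_zM_z^*$-expressions. As it stands, the proposal is a correct reformulation plus an unexecuted plan for its analytic core.

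For comparison, the paper handles this bridge (in the proof of Theorem~\ref{bergman_type kernel}, of which the proposition is the special case $\varphi(z)=\sqrt2\,z$, $u(z)=z/\sqrt2$) by an elementary substitution rather than by Schur complements. Writing \eqref{shimorin-ineq} as $\|f_0+\tfrac{z}{\sqrt2}f_1\|^2\le2\|zf_0\|^2+\|f_1\|^2$, substitute $f_0\mapsto\tfrac1{\sqrt2}Lf_0$ and use $\|zLf_0\|=\|f_0-f_0(0)\|\le\|f_0\|$ (normalization at $0$) to obtain $\|\tfrac1{\sqrt2}Lf_0+\tfrac{z}{\sqrt2}f_1\|^2\le\|f_0\|^2+\|f_1\|^2$; the substitution $f_0\mapsto\sqrt2\,zf_0$, with $L(zf_0)=f_0$, reverses this. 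The latter inequality says exactly that the row operator $T(f_0\oplus f_1)=\tfrac1{\sqrt2}(Lf_0+zf_1)$ is contractive, i.e. $LL^*+M_zM_z^*\le2I$ --- a purely co-analytic condition which acts computably on kernels: since $LL^*k_w=\frac{k_w-1}{z\overline w}$ and $M_zM_z^*k_w=z\overline w\,k_w$, the condition $TT^*\le I$ is precisely $1-(1-z\overline w)^2k\gg0$, i.e. your (a), and your factorization argument then finishes both directions. This substitution trick, exploiting $LM_z=I$, $M_zL=I-P_0$ and the normalization, is exactly the missing step in your write-up; inserting it in place of the Schur-complement plan yields a complete proof. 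One further caution: your closing ``heredity'' remark is not an independent route to the if-direction, since $\cH_{s^2}(b)$ is contractively contained in, but not an $M_z$-invariant subspace of, the Bergman space, so heredity under restriction to invariant subspaces does not apply to it directly.
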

In order to study $M_z$-invariant subspaces of the Bergman space one needs to 
drop both assumptions in the above proposition, that is, to consider 
non-normalized kernels, as well as left inverses of the forward shift with a null 
space of arbitrary dimension.

Several interesting extensions of Shimorin's inequality have been found by Olofsson
and Wennman (see \cite{OW} and the references therein). 
\\
A  very general class of kernels containing all the above examples are the so-
called Bergman-type kernels introduced by McCullough and Richter in 
\cite{MR}. These are reproducing kernels in $\bD$ of the form 
\begin{equation}\label{bergman_type}
	k(z,w)=\frac1{1-\varphi(z)\overline{\varphi(w)}(1-u(z)u(w)^*)},\quad z,w\in \bD,
\end{equation}
with $\varphi(0)=0$ and $k(z,z)\to\infty$ when $|z|\to 1^-$. It is shown in 
\cite{MR} that $\rho(z)=\frac{\varphi(z)}{z}$ can be chosen to be outer 
and bounded from above and below. Clearly, the Szeg\"o kernel is obtained for 
$\varphi(z)=z, u=0$, and the unweighted Bergman kernel for 
$\varphi(z)=z\sqrt{2}, u(z)=\frac{z}{\sqrt{2}}$.
\\
Also, if $k$ is of Bergman-type we have $Mult(\cH(k))=H^\infty$  with 
equality of norms (see again \cite{MR}). The following theorem extends all 
previous results in one complex variable to the general context of 
Bergman-type kernels.

\begin{theorem}\label{bergman_type kernel} 
Let $k$ be a  Bergman-type kernel on $\bD$ of the form \eqref{bergman_type}. 
Let $\cH$ be a Hilbert space of analytic functions on $\bD$ whose reproducing 
kernel is normalized at $0$ and assume that $\varphi\in Mult(\cH)$, the range 
of  $M_\varphi$ is closed of co-dimension $1$, and that $  u\in Mult(\cH\otimes \ell^2)$. 
Then  $\cH=\cH_k(b)$ for an analytic row-contraction $b$ in $\bD$ with $b(0)=0$, if and only 
\begin{equation}\label{bergman_type inequality}
	\left\|f_0+\sum_{n\ge 1}u_n	f_n\right\|^2\le\|\varphi f_0\|^2+\sum_{n\ge 1}\|f_n\|^2,
\end{equation}
whenever $f_0, f_1\ldots, f_n,\ldots\in  \cH$.	
\end{theorem}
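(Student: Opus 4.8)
The plan is to reduce the identity $\cH=\cH_k(b)$ to a positivity statement about reproducing kernels and then to match that statement with \eqref{bergman_type inequality}. Write $K$ for the reproducing kernel of $\cH$, which is normalized at $0$, and note that the Bergman-type kernel satisfies $k(\cdot,0)\equiv 1$, so both kernels are normalized at the origin. Now $\cH=\cH_k(b)$ for a contractive row multiplier $b$ exactly when $K=k(1-bb^*)$, and by the vector form of Proposition~\ref{Multiplier criterion} any $b$ with $bb^*=1-K/k$ is automatically a contractive multiplier, since $k(1-bb^*)=K\gg0$. Hence the existence of such a $b$ is equivalent to
\[
	L\defeq 1-\frac{K}{k}\gg0 .
\]
The normalization forces $L(0,0)=1-K(0,0)/k(0,0)=0$, which yields $b(0)=0$ for any factorization $L=bb^*$, and an analytic $\ell^2$-valued factor is produced from the holomorphic positive kernel $L$ in the standard way. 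Thus the theorem amounts to showing that $L\gg0$ if and only if \eqref{bergman_type inequality} holds.

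First I would translate $L\gg0$ into an operator inequality on $\cH$. From \eqref{bergman_type} we have $\tfrac1k=1-\varphi(z)\bar{\varphi(w)}+\varphi(z)\bar{\varphi(w)}u(z)u(w)^*$, so
\[
	L=(1-K)+\varphi\bar\varphi\,K-\varphi\bar\varphi\,uu^*K .
\]
Evaluating the associated quadratic form on a finite combination $h=\sum_j c_jK_{w_j}$ and using the adjoint formulas \eqref{mult-adjoint} and \eqref{eq:eigenvec}, namely $M_\varphi^*K_w=\bar{\varphi(w)}K_w$ and $M_u^*K_w=u(w)^*K_w$, together with $k_0\equiv 1$ (so the constant kernel $1$ contributes $|h(0)|^2$), one obtains
\[
	\sum_{i,j}\bar c_i c_j\,L(w_i,w_j)=\|M_\varphi^*h\|^2-\|M_u^*M_\varphi^*h\|^2-\bigl(\|h\|^2-|h(0)|^2\bigr).
\]
Since the kernel functions are dense, and writing $Q$ for the orthogonal projection of $\cH$ onto $\cH_0\defeq\{f\in\cH:f(0)=0\}$ (so $\|h\|^2-|h(0)|^2=\|Qh\|^2$), positivity $L\gg0$ is equivalent to
\[
	M_\varphi\bigl(I-M_uM_u^*\bigr)M_\varphi^*\ge Q .
\]

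Next I would analyse \eqref{bergman_type inequality} directly. Written as $\|f_0+M_u\tilde f\|^2\le\|M_\varphi f_0\|^2+\|\tilde f\|^2$ for $f_0\in\cH$, $\tilde f\in\cH\otimes\ell^2$, it says precisely that $A=[\,I\;M_u\,]$ and $C=\mathrm{diag}(M_\varphi,I)$ satisfy $A^*A\le C^*C$. By Douglas' factorization lemma this holds iff $A=XC$ for some contraction $X=[\,X_1\;M_u\,]$, that is, iff $M_\varphi$ has a left inverse $X_1$ with $X_1X_1^*+M_uM_u^*\le I$. As $\varphi\not\equiv 0$ and $\Ran M_\varphi$ is closed, $M_\varphi$ is bounded below and $M_\varphi^*M_\varphi$ is invertible, and the left inverse of smallest Gramian is the pseudoinverse, with $X_1X_1^*=(M_\varphi^*M_\varphi)^{-1}$; hence \eqref{bergman_type inequality} is equivalent to $(M_\varphi^*M_\varphi)^{-1}\le I-M_uM_u^*$. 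It remains to reconcile this with $L\gg0$, and this is where the hypotheses enter: since $\varphi(0)=0$ we have $\Ran M_\varphi\subseteq\cH_0$, and as both subspaces have codimension one they coincide, so $M_\varphi(M_\varphi^*M_\varphi)^{-1}M_\varphi^*=P_{\Ran M_\varphi}=Q$. Conjugating $(M_\varphi^*M_\varphi)^{-1}\le I-M_uM_u^*$ by $M_\varphi(\cdot)M_\varphi^*$ gives $Q\le M_\varphi(I-M_uM_u^*)M_\varphi^*$; conversely, since $M_\varphi^*$ is onto, testing $M_\varphi(I-M_uM_u^*)M_\varphi^*\ge Q$ on $h$ with $M_\varphi^*h=g$ recovers $\langle(I-M_uM_u^*)g,g\rangle\ge\langle(M_\varphi^*M_\varphi)^{-1}g,g\rangle$ for all $g$. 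This closes the chain of equivalences.

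I expect the main obstacle to be the identification $\Ran M_\varphi=\cH_0$ and the resulting passage between the two operator inequalities: this is the only place where the codimension-one hypothesis and the normalization $\varphi(0)=0$ are genuinely used, and it is exactly what ties the purely operator-theoretic inequality \eqref{bergman_type inequality} to the kernel condition $L\gg0$. A secondary technical point is producing an honest analytic, $\ell^2$-valued contractive multiplier $b$ from the positive holomorphic kernel $L$, and accommodating the borderline case in which $I-M_uM_u^*$ is not boundedly invertible — which is precisely why I would route the argument through Douglas' lemma rather than through completing the square in $\tilde f$.
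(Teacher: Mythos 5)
Your proof is correct: the kernel identification $\cH=\cH_k(b)\Leftrightarrow L=1-K/k\gg0$, the computation $L\gg0\Leftrightarrow M_\varphi(I-M_uM_u^*)M_\varphi^*\ge Q$, the Douglas-lemma reduction of \eqref{bergman_type inequality} to $(M_\varphi^*M_\varphi)^{-1}\le I-M_uM_u^*$, and the passage between the two inequalities via $\Ran M_\varphi=\cH_0$ all check out (including the unproved minimality claim: any left inverse $X_1$ differs from the pseudoinverse by an operator vanishing on $\Ran M_\varphi$, so the cross terms in $X_1X_1^*$ drop out and $X_1X_1^*\ge(M_\varphi^*M_\varphi)^{-1}$). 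The skeleton is the same as the paper's, but the machinery at the two key steps is genuinely different. Where you invoke Douglas' lemma and Gramian minimality, the paper works with the explicit left inverse $L_\varphi f=(f-f(0))/\varphi$ --- bounded precisely because $\Ran M_\varphi=\cH_0$ --- and shows by direct substitution ($f_0\mapsto L_\varphi f_0$, resp.\ $f_0\mapsto\varphi f_0$, using the normalization to get $\|\varphi L_\varphi f_0\|=\|f_0-f_0(0)\|\le\|f_0\|$) that \eqref{bergman_type inequality} is equivalent to contractivity of the row operator $T$ given by $T\bigl((f_n)_{n\ge0}\bigr)=L_\varphi f_0+\sum_{n\ge1}u_nf_n$. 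Since $L_\varphi$ annihilates the constants, which span $(\Ran M_\varphi)^\perp$, it is exactly the Moore--Penrose left inverse your Douglas argument isolates, and ``$T$ is a contraction'' is literally your inequality $(M_\varphi^*M_\varphi)^{-1}+M_uM_u^*\le I$; so both proofs pass through the same operator inequality. Second, instead of converting the kernel identity into a second operator inequality and matching the two in the middle, the paper runs the chain in one direction: it rewrites $I-TT^*\ge0$ as $(I-TT^*)h=bb^*$ with $b$ an analytic row contraction ($h$ the kernel of $\cH$), then uses the formulas for $L_\varphi L_\varphi^*h$ and $M_uM_u^*h$ to solve this equation for $h$, recognizing $h=k(z,w)\bigl(1-\varphi(z)\overline{\varphi(w)}b(z)b(w)^*\bigr)$, i.e.\ $\cH=\cH_k(\varphi b)$, with the vanishing at $0$ supplied by the factor $\varphi$ rather than by $L(0,0)=0$ as in your argument. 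What each buys: the paper's substitution is shorter and constructs the factorization explicitly; your version isolates more transparently where the codimension-one hypothesis enters (only through $\Ran M_\varphi=\cH_0$ and the projection identity $M_\varphi(M_\varphi^*M_\varphi)^{-1}M_\varphi^*=Q$) and, as you note, treats a possibly non-invertible $I-M_uM_u^*$ uniformly via Douglas' lemma instead of any completion of squares.
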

\begin{proof}
The assumptions on $M_\varphi$ imply that its range equals the space of 
functions in $\cH$ vanishing at the origin. Consequently, the operator 
$L_\varphi$ defined by \[
	L_\varphi f=\frac{f-f(0)}{\varphi},\quad f\in \cH,
\]
is a bounded left-inverse of $M_\varphi$. Then the inequality 
\eqref{bergman_type inequality} is equivalent to \begin{equation}\label{bergman_type inequality1}
	\left\|L_\varphi f_0+\sum_{n\ge 1}u_nf_n\right\|^2\le\|f_0\|^2+\sum_{n\ge 1}\|f_n\|^2.
\end{equation}
Indeed, if \eqref{bergman_type inequality} holds, we apply it to $L_\varphi f_0, f_1,\ldots$. 
From the fact that the kernel of $\cH$ is normalized at $0$ we obtain \[
	\|\varphi L_\varphi f_0\|^2=\|f_0-f_0(0)\|^2 \le \|f_0\|^2,
\] 
and \eqref{bergman_type inequality1} follows. 

Conversely, if \eqref{bergman_type inequality1} holds we apply it to 
$\varphi f_0, f_1,\ldots$ and obtain \eqref{bergman_type inequality}.
 Now \eqref{bergman_type inequality1} is further equivalent to the fact that 
the operator $T:\bigoplus_\mathbb{N}\cH\to \cH$ defined by \[
	T\sum_{n\ge 0}f_n=L_\varphi f_0+\sum_{n\ge 1}u_nf_n,
\]
is contractive. This can be re-written as \[
	I-TT^*\ge 0\,\Leftrightarrow (I-TT^*)h\gg 0  \,\Leftrightarrow (I-TT^*)h=bb^*,
\]
where $h$ is the reproducing kernel in $\cH$ and $b$ is an analytic row-contraction 
in $\bD$. Since \[
	L_\varphi L_\varphi^*h(z,w) 
	= \frac{h(z,w)-1}{\phi(z)\overline{\phi(w)}},\quad M_uM_u^*h(z,w)
	= u(z)u(w)^*h(z,w),\, u\in \Mult(\cH\otimes\ell^2, \cH),
\]
we can solve for $h$ in $(I-TT^*)h=bb^*$ and obtain after a straightforward 
calculation \[
	h(z,w)
	= \frac{1-\phi(z)\overline{\phi(w)}b(z)b(w)^*}{
		1-\phi(z)\overline{\phi(w)}(1-u(z)u(w)^*)}
	= k(z,w)(1-\phi(z)\overline{\phi(w)}b(z)b(w)^*)
\]
i.e. $h$ is the reproducing kernel of $\cH_k(\phi b)$.
\end{proof}

Besides the Szeg\H o, or the unweighted Bergman kernel, some standard examples 
of Bergman-type kernels are \[
	k_\beta(z,w) =\frac{1}{(1-z\overline{w})^\beta}, \quad z,w\in \bD,\, 1<\beta<2.
\]
These examples are discussed in \cite{OW}. The corresponding Hilbert spaces 
are standard weighted Bergman spaces. Of course, there are many other 
examples in this class, in particular in \cite{MR} there are produced 
examples such that the forward shift on the corresponding space is not 
subnormal.

\subsection{Compact embedding}
As mentioned in the beginning of this section $\cH_k(b)$ is always contained 
contractively in $\cH_k$. It is natural to ask when the contractive inclusion 
map is compact. As pointed out in the Introduction this question attracted
attention in the context of sub-weighted Bergman spaces.

If the original space $\cH_k$ is smaller, then the question is less interesting as 
we shall see below. We start with a simple necessary condition for the 
compactness of the inclusion map which holds under mild assumptions in the 
most general context of a kernel $k$ on the non-void set $\cX$.
\begin{proposition}\label{nec-compact-embedding} 
Let $k$ be a reproducing kernel on a non-empty set $\cX$ with the property 
that $k_y$ is a bounded function for every $y\in\cX$. Let 
$b\in \Mult(\cH_k\otimes \ell^2, \cH_k)$ with $\|M_b\|\leq1$ and suppose that the 
inclusion map from $\cH_k(b)$ to $\cH_k$ is compact. Then for every sequence
$(x_n)$ in $\cX$ with $\lim_{n\to\infty}k(x_n,x_n)=\infty$, we have \[
	\lim_{n\to\infty}\|b(x_n)\|_{\ell^2}=1.
\]
\end{proposition}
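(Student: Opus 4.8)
The plan is to use the normalized reproducing kernel of $\cH_k(b)$ as a test function and show that if compactness held while $\|b(x_n)\|_{\ell^2}$ stayed bounded away from $1$ along a subsequence, the inclusion would fail to shrink the norms of these test functions as required. First I would compute the norm of the reproducing kernel $k^b_{x_n} = k^b(\cdot, x_n)$ in $\cH_k(b)$, which is $\|k^b_{x_n}\|_{\cH_k(b)}^2 = k^b(x_n,x_n) = k(x_n,x_n)(1-\|b(x_n)\|_{\ell^2}^2)$, using the definition of $k^b$ and $b(x_n)b(x_n)^* = \|b(x_n)\|_{\ell^2}^2$. Then I would form the normalized kernels $g_n = k^b_{x_n} / \|k^b_{x_n}\|_{\cH_k(b)}$, which are unit vectors in $\cH_k(b)$, and examine the norm of their images in $\cH_k$ under the (compact) inclusion.

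The key computation is that, since the inclusion $\cH_k(b) \hookrightarrow \cH_k$ is the adjoint of the map sending $k_y \mapsto k^b_y$, the image of $k^b_{x_n}$ in $\cH_k$ is $k^b_{x_n}$ viewed as an element of $\cH_k$, whose $\cH_k$-norm can be estimated via Proposition \ref{Inclusion criterion} or by direct pairing. More directly, for any $f \in \cH_k(b)$ one has $\la f, k^b_{x_n}\ra_{\cH_k(b)} = f(x_n)$, and applying this to the inclusion I would relate $\|g_n\|_{\cH_k}$ from below to something that does not vanish. The cleanest route is to test compactness against the sequence $g_n$: since $k(x_n,x_n)\to\infty$, I claim $g_n\to 0$ weakly in $\cH_k(b)$ (because for a fixed kernel $k^b_y$ the pairing $\la g_n, k^b_y\ra_{\cH_k(b)} = g_n(y) = k^b(y,x_n)/\|k^b_{x_n}\|_{\cH_k(b)}$, and the numerator is controlled using boundedness of $k_y$ while the denominator blows up provided $\|b(x_n)\|_{\ell^2}$ stays away from $1$). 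Compactness of the inclusion would then force $\|g_n\|_{\cH_k}\to 0$. Computing $\|g_n\|_{\cH_k}^2$ and showing it is bounded below by a positive constant under the assumption $\limsup \|b(x_n)\|_{\ell^2} < 1$ yields the contradiction.

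The main obstacle I anticipate is the lower bound on $\|g_n\|_{\cH_k}$, i.e. controlling how much the inclusion shrinks the normalized test functions. I would handle it by computing $\|k^b_{x_n}\|_{\cH_k}^2$ directly. Writing $P = (I - M_b M_b^*)$, the element $k^b_{x_n}$ corresponds in the range model to $P^{1/2}$ applied to a suitable vector, and one finds $\|k^b_{x_n}\|_{\cH_k}^2 = \la P k_{x_n}, k_{x_n}\ra_{\cH_k}$ after unwinding the identifications. Using \eqref{eq:eigenvec}, namely $M_b^* k_{x_n} = b(x_n)^* k_{x_n}$, this evaluates to $(1-\|b(x_n)\|_{\ell^2}^2)\, k(x_n,x_n)$ as well, so that $\|g_n\|_{\cH_k}^2 = k^b(x_n,x_n)/\|k^b_{x_n}\|_{\cH_k(b)}^2$; I would track the constants carefully so that the ratio is pinned down. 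The upshot is that $\|g_n\|_{\cH_k}^2$ stays bounded below precisely when $\|b(x_n)\|_{\ell^2}$ does not approach $1$, contradicting the forced convergence $\|g_n\|_{\cH_k}\to 0$. Hence $\|b(x_n)\|_{\ell^2}\to 1$, as claimed. The boundedness hypothesis on each $k_y$ is exactly what I would use to establish the weak convergence $g_n \rightharpoonup 0$, so I expect that assumption to enter at precisely that point.
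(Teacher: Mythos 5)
Your overall strategy --- normalized kernels of $\cH_k(b)$ as test vectors, weak nullity, compactness, contradiction --- is viable, and it is essentially the mirror image of the paper's argument: the paper instead uses the normalized kernels $f_n=k_{x_n}/\|k_{x_n}\|_{\cH_k}$ of the \emph{big} space, shows $f_n\rightharpoonup 0$ in $\cH_k$ using the boundedness of each $k_y$, and exploits that $J^*k_{x_n}=k^b_{x_n}$, so that $\|J^*f_n\|_{\cH_k(b)}^2=k^b(x_n,x_n)/k(x_n,x_n)=1-\|b(x_n)\|_{\ell^2}^2\to 0$ directly, with no contradiction hypothesis. However, the step where you handle what you call the main obstacle contains a genuine error. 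Writing $P=I-M_bM_b^*$, one has $k^b_{x_n}=Pk_{x_n}$, hence
\[
	\|k^b_{x_n}\|_{\cH_k}^2=\la P^2k_{x_n},k_{x_n}\ra_{\cH_k},
	\qquad\text{not}\qquad \la Pk_{x_n},k_{x_n}\ra_{\cH_k}.
\]
The quantity $\la Pk_{x_n},k_{x_n}\ra_{\cH_k}=(1-\|b(x_n)\|_{\ell^2}^2)k(x_n,x_n)$ is the $\cH_k(b)$-norm squared of $k^b_{x_n}$ (in the range model $k^b_{x_n}=P^{1/2}(P^{1/2}k_{x_n})$ and $P^{1/2}$ is co-isometric onto $\cH_k(b)$), so your claimed identity amounts to saying $\|g_n\|_{\cH_k}=1$, i.e.\ that the inclusion is isometric on normalized kernels. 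This is false in general: since $0\le P\le I$ gives $P^2\le P$, you only get $\|k^b_{x_n}\|_{\cH_k}^2\le(1-\|b(x_n)\|_{\ell^2}^2)k(x_n,x_n)$, an inequality in the useless direction. Concretely, for $H^2$ and $b(z)=z/2$ one computes
\[
	\|k^b_w\|_{H^2}^2=\frac{1-\tfrac{7}{16}|w|^2}{1-|w|^2}
	<\frac{1-\tfrac14|w|^2}{1-|w|^2}=\|k^b_w\|_{\cH(b)}^2 ,\quad w\neq 0.
\]

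The gap is repairable, and the ``direct pairing'' you allude to in your second paragraph is the right tool: instead of computing $\|k^b_{x_n}\|_{\cH_k}$ exactly, pair against $k_{x_n}$ in $\cH_k$. Since $k^b_{x_n}\in\cH_k$, the reproducing property gives $\la k^b_{x_n},k_{x_n}\ra_{\cH_k}=k^b(x_n,x_n)=(1-\|b(x_n)\|_{\ell^2}^2)k(x_n,x_n)$, so Cauchy--Schwarz yields $\|k^b_{x_n}\|_{\cH_k}\ge(1-\|b(x_n)\|_{\ell^2}^2)\sqrt{k(x_n,x_n)}$ and hence
\[
	\|g_n\|_{\cH_k}^2\ \ge\ 1-\|b(x_n)\|_{\ell^2}^2,
\]
which is bounded below by a positive constant exactly under your contradiction hypothesis that $\|b(x_n)\|_{\ell^2}$ stays away from $1$ along a subsequence. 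With this lower bound the rest of your argument works: under that hypothesis the denominators $\|k^b_{x_n}\|_{\cH_k(b)}=((1-\|b(x_n)\|_{\ell^2}^2)k(x_n,x_n))^{1/2}$ blow up, so $g_n\rightharpoonup 0$ in $\cH_k(b)$ by boundedness of each $k_y$, and compactness of the inclusion forces $\|g_n\|_{\cH_k}\to 0$, a contradiction. Note that even after the repair your route is longer than the paper's: working with the kernels of $\cH_k$ and the adjoint $J^*$ gives the exact value $1-\|b(x_n)\|_{\ell^2}^2$ for the relevant norms and avoids both the contradiction setup and the Cauchy--Schwarz step.
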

\begin{proof} Let us begin with the simple observation that if 
$J:\cH_k(b)\to\cH_k$ is the inclusion map,then its adjoint $J^*$ satisfies \[
	J^*k_y(x)=(1-b(x)b^*(y))k_y(x),\quad x,y\in \cX.
\]
Now let $(x_n)$ be a sequence as in the statement and set 
$f_n = \frac{k_{x_n}}{\|k_{x_n}\|}$, so that $\|f_n\|=1$. Furthermore, by the 
assumption on $k$ we have \[
	\la f_n, k_{y}\ra
	 = \frac{k(x_n,y)}{\|k_{x_n}\|}
	= \frac{\overline{k_y(x_n)}}{\|k_{x_n}\|}\to0 \text{ as $n\to\infty$}.
\]
This proves that $(f_n)$ tends weakly to $0$ in $\cH_k$, hence if $J$ is 
compact, so is $J^*$, that is, $J^*f_n$ must tend in norm to 0. Thus, \[
	\|J^*f_n\|^2 = 1-\|b(x_n)\|^2 \to0.
\]
and the result follows.
\end{proof}
The result is of particular interest for most common reproducing kernel 
Hilbert spaces of analytic functions in the unit disc, in the case when $b$ 
is scalar-valued. Indeed, if $\cH_k$ is such a space with \[
	k(z,z)\to \infty,\quad |z|\to 1^-,
\]
and $b\in Mult(\cH_k)$ is contractive, by \cref{nec-compact-embedding} the 
compactness of the inclusion $J:\cH_k(b)\to\cH_k$ implies that the bounded 
analytic function $b$ satisfies for all $\lambda\in \mathbb{T}$ \[
	\lim_{z\to\lambda}|b(z)|=1,
\]
hence $b$ is a finite Blaschke product. However, in this generality,  finite 
Blaschke products may fail to be contractive multipliers so the corresponding 
generalized de Branges-Rovnyak spaces might not exist.
\begin{example}
\phantom{}
\begin{enumerate}[label=\alph*)]
\item In the Dirichlet space $D$ with kernel \[
	-\frac1{z\overline{w}}\log(1-z\overline{w}),\quad z,w\in \bD,
\]
every finite Blaschke product $b$ is a multiplier, but (see \cite{RS}) \[
	\|bf\|>\|f\|,\quad f\in D\setminus\{0\}.
\]
\item Let $\cH_2$ be the unweighted Bergman space and $\cH_1=H^2$ be the 
Hardy space on the unit disc.
Consider the reproducing kernel Hilbert space $\cH$ consisting of analytic 
functions $f$ in $\bD$ which can be written (uniquely) as \[
	f(z) =u_1(z^2)+zu_2(z^2), \quad u_1\in \cH_1,\, u_2\in \cH_2,
\]
with norm \[
	\|f\|^2=\|u_1\|^2_{\cH_1}+\|u_2\|^2_{\cH_2}.
\]
Then obviously, $M_z$ is unbounded on $\cH$ while if $\varphi\in H^\infty$, the 
function $\psi(z)=\varphi(z^2)$ is a multiplier with $\|M_\psi\|=\|\psi\|_\infty.$
\end{enumerate}\end{example}
\begin{proposition}\label{contractive_Blaschke} 
Given a reproducing kernel Hilbert space $\cH_k$  of analytic functions in 
$\bD$, the finite Blaschke products are contractive multipliers if and only 
if the identity function $\zeta(z)=z$ is a contractive multiplier.
\end{proposition}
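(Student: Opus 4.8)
The plan is to treat the two implications separately. The forward implication is immediate, so essentially all the work goes into the reverse one, which I would reduce to a single application of Schur's product theorem through the multiplier criterion (\cref{Multiplier criterion}).

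First I would dispose of the easy direction. The identity function $\zeta(z)=z=\frac{z-0}{1-0\cdot z}$ is itself a finite Blaschke product (a single factor located at the origin). Hence if every finite Blaschke product is a contractive multiplier of $\cH_k$, then in particular $\zeta$ is, and nothing further is needed.

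For the reverse implication I would assume $\zeta$ is a contractive multiplier, which by \cref{Multiplier criterion} means that
\[
	k(z,w)(1-z\overline{w})\gg 0 .
\]
Given an arbitrary finite Blaschke product $B$, the idea is to exploit the factorization
\[
	k(z,w)\bigl(1-B(z)\overline{B(w)}\bigr)
	=\Bigl[k(z,w)(1-z\overline{w})\Bigr]\cdot
	\left[\frac{1-B(z)\overline{B(w)}}{1-z\overline{w}}\right].
\]
The first bracket is positive by hypothesis, and the second bracket is a positive kernel as well (it is the reproducing kernel of the model space $H^2\ominus BH^2$ associated with the Szeg\H o kernel $s(z,w)=\frac{1}{1-z\overline{w}}$; I verify this below). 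Schur's product theorem then yields $k(z,w)\bigl(1-B(z)\overline{B(w)}\bigr)\gg 0$, and a final appeal to the multiplier criterion shows that $B$ is a contractive multiplier of $\cH_k$.

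The only genuinely nontrivial point—and the step I expect to be the crux—is recognizing this factorization together with the positivity of the quotient kernel $\frac{1-B(z)\overline{B(w)}}{1-z\overline{w}}$; once these are in hand the argument is purely formal. This positivity is classical, but if one prefers a self-contained verification I would check it for a single factor $b_a(z)=\frac{z-a}{1-\overline{a}z}$, where a direct computation gives $\frac{1-b_a(z)\overline{b_a(w)}}{1-z\overline{w}}=(1-|a|^2)\frac{1}{(1-\overline{a}z)(1-a\overline{w})}$, a rank-one positive kernel, and then extend to a general $B$ by a telescoping induction on the number of factors, each step being an application of Schur's product theorem (a unimodular constant factor changes nothing, since it cancels in $1-B(z)\overline{B(w)}$).
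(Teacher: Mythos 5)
Your proof is correct, and it takes a genuinely different route from the paper's. The paper settles the nontrivial direction in one line of operator theory: a finite Blaschke product $B$ is rational with all poles outside $\overline{\bD}$ and $\sup_{\bD}|B|\leq 1$, so the functional calculus for contractions (von Neumann's inequality) gives $\|B(M_z)\|\leq 1$, and one identifies $B(M_z)=M_B$. You instead stay entirely inside the reproducing-kernel formalism: Proposition~\ref{Multiplier criterion} converts contractivity of $M_\zeta$ into positivity of $k(z,w)(1-z\overline{w})$, and then the factorization
\[
	k(z,w)\bigl(1-B(z)\overline{B(w)}\bigr)
	= \bigl[k(z,w)(1-z\overline{w})\bigr]\cdot
	\frac{1-B(z)\overline{B(w)}}{1-z\overline{w}}
\]
together with Schur's product theorem finishes the argument. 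Both proofs are complete; the difference lies in what they lean on. Your key positivity fact --- that the model-space kernel $\frac{1-B(z)\overline{B(w)}}{1-z\overline{w}}$ is positive, which you verify by the single-factor computation and a telescoping induction --- is in substance the paper's own Lemma~\ref{B_identity}, which the paper proves later (in \S 3.2) in a sharper, quantitative form for the proof of Theorem~\ref{compact_embedd}; you only need its qualitative content, so your route quietly re-derives that lemma. What your approach buys: it avoids any appeal to dilation theory or von Neumann's inequality, and it gives a stronger conclusion essentially for free --- granting the classical positivity of $\frac{1-B(z)\overline{B(w)}}{1-z\overline{w}}$ for an arbitrary Schur-class function $B$, your argument shows that contractivity of $M_\zeta$ makes \emph{every} function in the closed unit ball of $H^\infty$ a contractive multiplier of $\cH_k$, a statement the functional-calculus one-liner only reaches after an additional limiting argument (e.g. passing from $B_r(z)=B(rz)$ to $B$ as $r\to 1^-$). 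What the paper's approach buys is brevity.
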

\begin{proof} One direction is obvious since the identity is a finite 
Blaschke product, while the converse follows by standard functional calculus 
for contractions from the equality $b(M_z)=M_b$ valid for any finite Blaschke 
product.
\end{proof}
A similar result can be derived for the compactness of the embedding of 
$\cH_k(b)$ into $\cH_k$, but the proof is somewhat more involved.  Our next 
result provides a far-reaching generalization of the result in  \cite{zhu_subbergm3}. 
\begin{theorem}\label{compact_embedd} Assume that $\cH_k$ is a reproducing 
kernel Hilbert space of analytic functions on $\bD$ such that \begin{enumerate}[label=(\roman*)]
\item $M_z$ is contractive on $\cH_k$,
\item $k(z,z)\to\infty$ as $|z|\to1^-$.
\end{enumerate}
Then if $b\in Mult(\cH_k)$ is contractive, the inclusion 
map From $\cH_k(b)$ into  $\cH_k$ is compact if and only if $I-M_zM_z^*$ is 
compact and $b$ is a finite Blaschke product.
\end{theorem}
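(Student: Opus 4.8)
The plan is to reduce the compactness of the inclusion $J\colon\cH_k(b)\to\cH_k$ to the compactness of a single defect operator on $\cH_k$, and then to transport that compactness across the functional calculus $b\mapsto b(M_z)$. First I would record the operator identity $JJ^*=I-M_bM_b^*$. As in the proof of Proposition~\ref{nec-compact-embedding} the adjoint of the inclusion satisfies $J^*k_y=k^b_y$, so that $JJ^*k_y=(1-b\,\overline{b(y)})k_y=(I-M_bM_b^*)k_y$; since the $k_y$ span a dense subspace of $\cH_k$ and both sides are bounded, $JJ^*=I-M_bM_b^*$. Because $J$ is compact iff $JJ^*$ is compact, the entire theorem reduces to the assertion
\[
	I-M_bM_b^*\text{ compact}\iff\bigl(I-M_zM_z^*\text{ compact and }b\text{ a finite Blaschke product}\bigr).
\]

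For the forward implication I would invoke Proposition~\ref{nec-compact-embedding} together with hypothesis (ii): compactness of $J$ forces $|b(z)|\to1$ as $|z|\to1^-$, and since $b$ lies in the closed unit ball of $H^\infty=\Mult(\cH_k)$ this makes $b$ a finite Blaschke product, exactly as in the discussion following that proposition. It then remains to prove, for a finite Blaschke product $b$ and under hypothesis (i) (so that $M_b=b(M_z)$ by the functional calculus used in the proof of Proposition~\ref{contractive_Blaschke}), the equivalence
\[
	I-M_bM_b^*\text{ compact}\iff I-M_zM_z^*\text{ compact},
\]
which simultaneously supplies the remaining half of the forward direction and the whole converse.

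The engine for this equivalence is a single-factor resolvent identity. Writing $T=M_z$ and $\phi_a(z)=\frac{z-a}{1-\overline az}$ for $|a|<1$, and using that $\|\overline aT\|\le|a|<1$ so that $I-\overline aT$ is boundedly invertible, a direct computation gives
\[
	I-\phi_a(T)\phi_a(T)^*=(1-|a|^2)(I-\overline aT)^{-1}(I-TT^*)(I-aT^*)^{-1}.
\]
Conjugation by the bounded invertible operator $(I-\overline aT)^{-1}$ preserves compactness in both directions, so $I-\phi_a(T)\phi_a(T)^*$ is compact iff $I-TT^*$ is. I would then factor $b$ (up to a unimodular constant, which does not affect $M_bM_b^*$) as $b=\phi_{a_n}b_{n-1}$ with $b_{n-1}$ again a finite Blaschke product, and use the splitting
\[
	I-M_bM_b^*=\bigl(I-\phi_{a_n}(T)\phi_{a_n}(T)^*\bigr)+\phi_{a_n}(T)\bigl(I-M_{b_{n-1}}M_{b_{n-1}}^*\bigr)\phi_{a_n}(T)^*,
\]
a sum of two positive operators (both symbols being contractive multipliers). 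If $I-TT^*$ is compact, the first summand is compact by the resolvent identity and the second by induction, so $I-M_bM_b^*$ is compact; conversely the first summand is a positive operator dominated by the compact positive operator $I-M_bM_b^*$, hence compact, and the resolvent identity returns the compactness of $I-TT^*$. An induction on the number of Blaschke factors then completes the argument.

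The main obstacle is this third step: controlling \emph{products} of Blaschke factors at the operator level rather than merely for the symbols. The boundary comparison $1-|b(z)|^2\asymp1-|z|^2$ is suggestive but not directly usable on the operators, so it is the positivity splitting of $I-M_bM_b^*$ combined with the exact conjugation formula that makes the induction go through; I would also dispose of the degenerate case of a unimodular constant $b$ separately, where both defect operators are trivially compact.
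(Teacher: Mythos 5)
Your proposal is correct and is essentially the paper's own argument transcribed from kernels to operators: your single-factor resolvent identity is exactly Lemma \ref{B_identity} in the case $n=1$ written at the operator level, your telescoping induction on the Blaschke factors reproduces the induction in that lemma and, once unwound, yields precisely the paper's identity \eqref{op:Blaschke_equation} with $v_l=(\phi_{a_l}')^{1/2}b_{l-1}$, and both proofs extract compactness in the converse direction by the same two facts (a positive operator dominated by a compact positive operator is compact, and conjugation by the invertible operator $(I-\overline{a}M_z)^{-1}$ preserves compactness). The one flaw is peripheral: your closing claim that for a unimodular constant $b$ ``both defect operators are trivially compact'' is false --- there $I-M_bM_b^*=0$ while $I-M_zM_z^*$ need not be compact (e.g.\ a radial kernel $k(z,w)=\sum_n k_n(z\overline{w})^n$ with $k_n$ nondecreasing but $k_n/k_{n+1}\not\to1$), so the stated equivalence genuinely fails in that degenerate case; it is ruled out only by the paper's standing assumption that $b$ is non-constant, which your main argument, like the paper's, implicitly uses when it takes the Blaschke product to have at least one zero.
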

The proof is essentially based on the  identity below. A similar formula appears
in \cite{gu_hypercontractions}. Given $a\in \bD$ we denote by $\varphi_a$ the 
disc-automorphism \[
	\varphi_a(z)=\frac{z-a}{1-\overline{a}z},\quad z\in \bD.
\]
\begin{lemma}\label{B_identity} 
Let $s(z,w)=\frac1{1-z\overline w}$ and let $b$ be a finite Blaschke product with 
zeros $a_1,\ldots,a_n\in \bD$. Then \begin{equation}\label{eq:blaschke-formula}
	(1-b(z)\overline{b(w)})s(z,w) 
	= \sum_{l=1}^n(\phi_{a_l}'(z)\overline{\phi_{a_l}'(w)})^\frac12
	\prod_{j=1}^{l-1}\phi_{a_j}(z)\overline{\phi_{a_j}(w)}.
\end{equation}\end{lemma}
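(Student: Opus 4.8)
The plan is to reduce \eqref{eq:blaschke-formula} to a single-factor computation and then telescope over the partial products of the Blaschke factors. First I would observe that we may assume $b=\prod_{j=1}^n\phi_{a_j}$: any finite Blaschke product with the prescribed zeros differs from this product by a unimodular constant $\lambda$, and since only the combination $b(z)\overline{b(w)}$ enters the identity, the factor $|\lambda|^2=1$ drops out and both sides are unchanged.

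The core is a single-automorphism identity. A direct expansion of the numerator gives
\[
	1-\phi_a(z)\overline{\phi_a(w)}
	=\frac{(1-|a|^2)(1-z\overline w)}{(1-\overline a z)(1-a\overline w)},
\]
while differentiating shows $\phi_a'(z)=\frac{1-|a|^2}{(1-\overline a z)^2}$. Interpreting $\bigl(\phi_a'(z)\overline{\phi_a'(w)}\bigr)^{1/2}$ as the positive-definite kernel $\frac{1-|a|^2}{(1-\overline a z)(1-a\overline w)}$ (the branch coming from $(\phi_a')^{1/2}(z)=\frac{(1-|a|^2)^{1/2}}{1-\overline a z}$, so that it is genuinely of the form $g(z)\overline{g(w)}$), these two computations combine into
\[
	\bigl(1-\phi_a(z)\overline{\phi_a(w)}\bigr)\,s(z,w)
	=\bigl(\phi_a'(z)\overline{\phi_a'(w)}\bigr)^{1/2},
\]
which is precisely the $n=1$ case of the lemma.

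For the general case I would set $b_l=\prod_{j=1}^l\phi_{a_j}$ with $b_0\equiv1$, so that $b=b_n$ and $b_l=b_{l-1}\phi_{a_l}$. The organizing trick is the telescoping identity
\[
	1-b(z)\overline{b(w)}
	=\sum_{l=1}^n\bigl(b_{l-1}(z)\overline{b_{l-1}(w)}-b_l(z)\overline{b_l(w)}\bigr),
\]
in which each summand factors, using $b_l=b_{l-1}\phi_{a_l}$, as
\[
	b_{l-1}(z)\overline{b_{l-1}(w)}\bigl(1-\phi_{a_l}(z)\overline{\phi_{a_l}(w)}\bigr).
\]
Multiplying through by $s(z,w)$, applying the single-factor identity to the last parenthesis, and recognizing that $b_{l-1}(z)\overline{b_{l-1}(w)}=\prod_{j=1}^{l-1}\phi_{a_j}(z)\overline{\phi_{a_j}(w)}$ reproduces \eqref{eq:blaschke-formula} term by term.

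There is no serious obstacle here, as the computation is elementary. The only points requiring care are the algebraic verification of the single-factor identity and the fixing of the branch of the square root so that $\bigl(\phi_a'(z)\overline{\phi_a'(w)}\bigr)^{1/2}$ is indeed of the form $g(z)\overline{g(w)}$; the substantive idea is simply to organize $1-b(z)\overline{b(w)}$ as a telescoping sum of consecutive partial products.
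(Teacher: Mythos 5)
Your proof is correct and essentially the same as the paper's: the single-factor identity $(1-\phi_a(z)\overline{\phi_a(w)})s(z,w)=(\phi_a'(z)\overline{\phi_a'(w)})^{1/2}$ and the decomposition of $1-b(z)\overline{b(w)}$ via the partial products $b_l$ are exactly the paper's ingredients, with your telescoping sum being nothing more than the unrolled form of the paper's induction on the number of zeros. Your explicit remarks about the unimodular constant and the choice of branch of the square root are fine points the paper leaves implicit.
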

\begin{proof} We proceed by induction. For $a\in\bD$ a direct computation 
gives \[
	(1-\phi_a(z)\overline{\phi_a(w)})s(z,w) 
	= (\phi_a'(z)\overline{\phi_a'(w)})^\frac12,
\]
which is the case $n=1$.  Suppose that \eqref{eq:blaschke-formula} holds for 
any Blaschke product with $n$ zeros, and let $b$ be the finite Blaschke 
product with zeros $a_1,\ldots, a_{n+1}\in \bD$. Let $b_0=1$ and let 
$b_k,~k\ge 1,$ denote the Blaschke product having only the first $k$ zeros, 
$a_1,\ldots,a_k$. Write \[
	1-b(z)\overline{b(w)}
	=1-b_n(z)\overline{b_n(w)} + b_n(z)\overline{b_n(w)}(1-\varphi_{a_{n+1}}(z)\overline{\varphi_{a_{n+1}}(w)}).
\]
By the induction hypotheses \[
	(1-b_n(z)\overline{b_n(w)})s(z,w)=\sum_{l=1}^n(\phi_{a_l}'(z)\overline{\phi_{a_l}'(w)})^\frac12 b_{l-1}(z)\overline{b_{l-1}(w)},
\]
and by the case $n=1$, \[
	b_n(z)\overline{b_n(w)}(1-\varphi_{a_{n+1}}(z)\overline{\varphi_{a_{n+1}}(w)})s(z,w)
	= (\phi_{a_{n+1}}'(z)\overline{\phi_{a_{n+1}}'(w)})^\frac12b_n(z)\overline{b_n(w)},
\]
which proves the induction step and the result follows.
\end{proof}
\noindent{\it Proof of \cref{compact_embedd}.}
The assumption $\|M_z\|\le 1$ implies that for all $a\in \bD$, the functions 
$u_a(z)=(1-\overline{a}z)^{-1},~z\in \bD$ are multipliers, in fact 
$M_{u_a}=(I-\overline{a}M_z)^{-1}$. Assume that $I-M_zM_z^*$ is compact and let 
$b$ be a finite Blaschke product  with zeros $a_1,\ldots,a_n\in \bD$.
Multiply both sides of \eqref{eq:blaschke-formula} by $k(z,w)/s(z,w)$, use 
the identity $u(z)\overline{u(w)}k(z,w)=M_uM_u^*k_w(z)$  together with the density 
of the linear span of $\{k_w:~w\in \bD\}$ to obtain an equality of the form 
\begin{equation}\label{op:Blaschke_equation}
	I-M_bM_b^*= \sum_{l=1}^nM_{v_l}(I-M_zM_z^*)M_{v_l}^*,
\end{equation}
with $v_1,\ldots,v_n\in Mult(\cH_k)$.  Thus, by assumption, each operator in 
the sum is compact, hence, so is $I-M_bM_b^*$. Now if $J:\cH_k(b)\to \cH_k$ 
is the inclusion map,  as above it follows that \[
	J^*k_w(z)=(1-b(z)\overline{b(w)})k(z,w),
\]
hence $JJ^*=I-M_bM_b^*$ is compact. Conversely, if $JJ^*=I-M_bM_b^*$ is 
compact, by \cref{nec-compact-embedding}, $b$ must be a Blaschke product, so 
that \eqref{op:Blaschke_equation} holds. Note that all operators on the right 
hand side are positive, and in the first term we have \[
	v_1(z)
	= (\varphi_{a_1}'(z))^{\frac12}
	= \frac{(1-|a_1|^2)^{\frac12}}{1-\overline{a_1}z},
\]
is an invertible multiplier. This leads to \[
	M_{v_1}^{-1}(I-M_bM_b^*)(M_{v_1}^{-1})^*\ge I-M_zM_z^*.
\]
Since the left hand side is compact and the right hand side is positive it 
follows that $I-M_zM_z^*$ is compact and the proof is complete. \qed

The theorem applies not only to arbitrary weighted Bergman spaces on $\bD$, 
or the Hardy space. The result holds in many other reproducing kernel spaces, 
for example those corresponding to radial kernels \[
	k(z,w)=\sum_{n\ge 0} k_n(z\overline{w})^n,\quad z,w\in \bD,
\]
with $0<k_n\le k_{n+1},~\lim_{n\to\infty}\frac{k_n}{k_{n+1}}=1.$
\\
It is an interesting question to characterize the compact embedding of 
$\cH_k(b)$ into $\cH_k$ for a a contractive row multiplier $b$.

The situation is much more complicated for analytic reproducing kernels on 
the unit ball $\bB_d,~d>1$. A particular case that can be treated with the 
method above is when $b$ is an automorphism of $\uball$ and $\cH_k=\cH_\beta$ 
for some $\beta>1$. In this case we have the following identity similar to 
\eqref{eq:blaschke-formula} (see for example \cite{rudin_uball}), \[
	1-\la b(z),b(w)\ra 
	= \frac{(1-\la a,a\ra)(1-\la z,w\ra)}{(1-\la z,a\ra)(1-\la a,w\ra)}
\]
where $a=b^{-1}(0)$. This leads to \[
	s^\beta(z,w)(1-b(z)b(w)^*)
	= s^{\beta-1}(z,w)\phi(z)\overline{\phi(w)},
\]
where $\phi(z)=\frac{(1-\la a,a\ra)^{\frac12}}{1-\la z, a\ra}$ is an 
invertible multiplier of $\cH_{\beta-1}$. In other words, 
$\cH_\beta(b)=\cH_{\beta-1}$  with equivalent norms. This proves the 
following result. \begin{proposition}
If $\beta>1$ and $b$ is an automorphism of $\uball$, then $\Hbeta(b)$ is 
compactly embedded in $\Hbeta$.
\end{proposition}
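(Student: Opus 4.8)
The plan is to build directly on the kernel identity displayed just before the statement, which already identifies $\Hbeta(b)$ with $\cH_{\beta-1}$ as sets carrying equivalent norms. First I would record why this identification is legitimate: the function $\phi(z)=(1-\la a,a\ra)^{1/2}(1-\la z,a\ra)^{-1}$ and its reciprocal $1/\phi(z)=(1-\la a,a\ra)^{-1/2}(1-\la z,a\ra)$ are both bounded and holomorphic on $\Bd$, hence multipliers of $\cH_{\beta-1}$, so $M_\phi$ is boundedly invertible. The factored kernel $s^{\beta-1}(z,w)\phi(z)\overline{\phi(w)}$ is then the reproducing kernel of $M_\phi\cH_{\beta-1}=\cH_{\beta-1}$ with an equivalent norm, which is exactly the content of $\Hbeta(b)=\cH_{\beta-1}$. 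Since compactness of a bounded inclusion is unaffected when the norm on its domain is replaced by an equivalent one, the proposition reduces to showing that the natural inclusion $J\colon\cH_{\beta-1}\hookrightarrow\Hbeta$ is compact. (Its boundedness is already clear from $s^\beta-s^{\beta-1}=s^{\beta-1}(s-1)\gg0$, but it also reappears below.)

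Next I would diagonalize $J$ in the monomial bases. For each $\gamma>0$ the expansion $s^\gamma(z,w)=\sum_\alpha\frac{(\gamma)_{|\alpha|}}{\alpha!}\,z^\alpha\overline{w}^\alpha$, with $(\gamma)_n=\gamma(\gamma+1)\cdots(\gamma+n-1)$, exhibits the monomials $\{z^\alpha\}$ as an orthogonal basis of $\cH_\gamma$ satisfying $\|z^\alpha\|_{\cH_\gamma}^2=\alpha!/(\gamma)_{|\alpha|}$. The map $J$ sends each $z^\alpha$ to itself, and as distinct monomials are orthogonal in both spaces, $J$ is diagonal with respect to the orthonormal bases obtained by normalizing the $z^\alpha$ in the two norms. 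Its singular values are
\[
	\sigma_\alpha=\frac{\|z^\alpha\|_{\Hbeta}}{\|z^\alpha\|_{\cH_{\beta-1}}},\qquad
	\sigma_\alpha^2=\frac{(\beta-1)_{|\alpha|}}{(\beta)_{|\alpha|}}=\frac{\beta-1}{\beta-1+|\alpha|},
\]
where the last equality is the telescoping of the two Pochhammer products.

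Since $\sigma_\alpha^2\to0$ as $|\alpha|\to\infty$ and there are only finitely many multi-indices of each degree, the collection $\{\sigma_\alpha\}$ is a null sequence; a diagonal operator with null singular values is compact, so $J$ is compact and the proposition follows. I do not anticipate a genuine obstacle: the analytic substance is entirely contained in the automorphism identity quoted above, and what remains is the bookkeeping of the singular-value computation. The only steps deserving a moment of care are the invertibility of $M_\phi$ (which validates the passage to the equivalent norm) and the remark that compactness of an inclusion survives such a change of norm on the domain.
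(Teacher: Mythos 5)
Your overall route is exactly the paper's: invoke the automorphism identity, factor the kernel as $s^\beta(z,w)(1-b(z)b(w)^*)=s^{\beta-1}(z,w)\phi(z)\overline{\phi(w)}$, conclude that $\Hbeta(b)=\cH_{\beta-1}$ with equivalent norms, and then use compactness of the inclusion $\cH_{\beta-1}\hookrightarrow\Hbeta$. The paper leaves this last inclusion step implicit, and your diagonalization in the monomial basis, with $\sigma_\alpha^2=(\beta-1)_{|\alpha|}/(\beta)_{|\alpha|}=(\beta-1)/(\beta-1+|\alpha|)\to 0$, is a correct and welcome way to make it explicit; the norm computation $\|z^\alpha\|_{\cH_\gamma}^2=\alpha!/(\gamma)_{|\alpha|}$ and the telescoping are both right.

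There is, however, one genuinely wrong inference: you assert that $\phi$ and $1/\phi$ are multipliers of $\cH_{\beta-1}$ \emph{because} they are bounded and holomorphic on $\Bd$. That implication fails precisely in part of the range the proposition covers: whenever $\beta-1\le d$, the algebra $\Mult(\cH_{\beta-1})$ is a proper subalgebra of $H^\infty(\bB_d)$ (for instance the Drury-Arveson space $\gamma=1$ when $d>1$, or the Dirichlet-type spaces $0<\gamma<1$ when $d=1$), so boundedness alone establishes nothing. Since $M_\phi$ being boundedly invertible is what legitimizes the identification $\Hbeta(b)=\cH_{\beta-1}$ with equivalent norms, this step needs a valid argument. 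The repair is immediate: writing $a=b^{-1}(0)\in\bB_d$, the function $\phi(z)=(1-\la a,a\ra)^{1/2}(1-\la z,a\ra)^{-1}$ extends analytically to the ball of radius $1/\|a\|>1$, hence to a neighborhood of $\overline{\bB_d}$, and such functions are multipliers of every $\cH_\gamma$, $\gamma>0$ --- a fact the paper itself records in the proof of Proposition~\ref{beta:kernel-factored}; meanwhile $1/\phi$ is a constant multiple of a degree-one polynomial and is trivially a multiplier. With that substitution your proof is complete and matches the paper's argument.
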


\subsection{A model for $\Hk(b)$} One of the  main difficulties that occur 
when dealing with de-Branges-Rovnyak spaces and their generalizations is 
finding functions in the space  other than finite linear combinations of 
reproducing kernels. In some cases this led to a number of recent non-trivial 
results. For example, the main result in \cite{aleman_conbackshift} shows 
that in one variable, $\cH_s(b)$ contains a dense subset of functions that 
extend continuously to $\overline{\bD}$. Further refinements of that result have 
been recently obtained by Limani and Malman \cite{LM}. The method in \cite{
aleman_conbackshift} relies on a special formula for the norm in $\cH_s(b)$ 
which turns out to be essentially related to the famous Nagy-Foia\c{s} model 
for the backward shift on these spaces (see \cite{NFBK}).
Some of these ideas do extend to the general context of $\cH_k(b)$-spaces and 
provide useful information about their structure. To illustrate this, 
consider a scalar-valued reproducing  kernel $k$ on the non-empty set $\cX$ 
with corresponding Hilbert space $\cH_k$ and fix a contractive row-multiplier 
$b\in\Mult(\cH_k\otimes\ell^2, \cH_k)$. The considerations below apply to a 
scalar, or finite dimensional vector $b$ in a trivial way. Therefore for the 
rest of the subsection we shall consider the case when 
$b\in\Mult(\cH_k\otimes\ell^2, \cH_k)$.

Denote by $\Delta=(I-M_b^*M_b)^\frac12:\cH_k\otimes \ell^2\to \cH_k\otimes \ell^2$. 
If $\Delta$ (or $I-M^*_bM_b$) is injective, then \[
	\|f\|_\Delta = \|\Delta f\|, 
\]
is a norm on $\cH_k\otimes \ell^2$. In this case we let $\cL_\Delta$ be the 
completion of $\cH_k\otimes \ell^2$ with respect to $\|\cdot\|_\Delta$. We 
shall be interested in the closed subspace  $\cM\subset \cH_k\oplus\cL_\Delta$ 
defined by \begin{equation}\label{M_def}
	\cM=\{(bf,f):f\in\cH_k\otimes\ell^2\}
\end{equation}
Let us record two simple facts about this subspace. 
\begin{proposition}\label{M_perp}
Suppose that $\Delta$ is injective and let $\cM\subset \cH_k\oplus\cL_\Delta$ 
be the subspace given by \eqref{M_def}. Then:\begin{enumerate}[label=(\roman*)]
\item $\cM$ is closed,
\item For $(u,v)\in \cH_k\oplus \cH_k\otimes\ell^2$ its orthogonal projection onto $\cM$, $P_\cM(u,v)$ is given by
\end{enumerate}
\vspace{-1em}\begin{equation}\label{eq:proj}
	 P_\cM(u,v) = (bw, w), \qquad w = M_b^*u + \Delta^2 v.
\end{equation}
\end{proposition}
\begin{proof} 
(i) If $((bf_n,f_n))$ converges in $\cH_k\oplus\cL_\Delta$ then from \[
	\|(bf_n,f_n)-(bf_m,f_m)\|^2
	=\|b(f_n-f_m)\|^2_{\cH_k} + \|\Delta(f_n-f_m)\|^2_{\cH_k\otimes\ell^2}
	= \|f_n-f_m\|^2_{\cH_k\otimes\ell^2},
\]
we see that $(f_n)$ is Cauchy in $\cH_k\otimes\ell^2$, hence convergent.

(ii) With $f\in \cH_k\otimes \ell^2$ and $w$ as in the statement we have	
	 \begin{align*}
	\la (bf, f), (bw, w)\ra_{\cH_k\oplus\cL_\Delta}
	&= \la f, w\ra_{ \cH_k\otimes\ell^2}
	\\	&= \la f, M_b^*u + \Delta^2 v\ra_{ \cH_k\otimes\ell^2}
	\\	&= \la (bf, f), (u, v)\ra{\cH_k\oplus\cL_\Delta}.
	\end{align*}
\end{proof}
The main result of this subsection is given below.

\begin{theorem}\label{thm:Hkb-unitary}
Suppose that $\Delta$ is  injective and let 
$\cM\subset \cH_k\oplus\cL_\Delta$ be the  subspace given by \eqref{M_def}.  
Then the map $J:\cM^\perp \to \cH_k(b)$, $J(u,v)=u$ is unitary. Equivalently, 
for every $f\in\cH_k(b)$  there exists a unique $g\in\cL_\Delta$ such that 
$(f,g)\in \cM^\perp$ and \[
	\|f\|_{\cH_k(b)}^2 = \|f\|_{\cH_k}^2+\|g\|_{\cL_\Delta}^2.
\]
\end{theorem}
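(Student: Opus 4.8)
The plan is to exploit the realization of $\cH_k(b)$ as $\Ran(I-M_bM_b^*)^\frac12$ recorded above, together with the explicit projection formula \eqref{eq:proj}, verifying unitarity of $J$ first on a dense subspace of $\cM^\perp$ and then extending by continuity. Write $D\defeq(I-M_bM_b^*)^\frac12$, so that $D$ is a coisometry of $\cH_k$ onto $\cH_k(b)$ and $\la Dx,Dy\ra_{\cH_k(b)}=\la x,y\ra_{\cH_k}$ for $x,y\in\cH_k\ominus\ker D$. Since $\Delta\le I$, the identity embeds $\cH_k\otimes\ell^2$ contractively and densely into $\cL_\Delta$; hence $\cH_k\oplus(\cH_k\otimes\ell^2)$ is dense in $\cH_k\oplus\cL_\Delta$ and its image $\cD\defeq P_{\cM^\perp}\!\big(\cH_k\oplus(\cH_k\otimes\ell^2)\big)$ is dense in $\cM^\perp$. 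I would establish the isometry of $J$ on $\cD$ and then pass to the closure.

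Fix $(u,v)\in\cH_k\oplus(\cH_k\otimes\ell^2)$ and set $w=M_b^*u+\Delta^2v$ as in \eqref{eq:proj}, so $P_{\cM^\perp}(u,v)=(u-bw,\,v-w)$. The first ingredient is the intertwining identity $M_b\Delta^2=M_b(I-M_b^*M_b)=(I-M_bM_b^*)M_b$, which gives for the first coordinate
\[
	JP_{\cM^\perp}(u,v)=u-M_bw=(I-M_bM_b^*)(u-M_bv)=D^2\tilde u,\qquad \tilde u\defeq u-M_bv.
\]
The second ingredient is the norm computation: since $P_{\cM^\perp}$ is an orthogonal projection and the $\cL_\Delta$-inner products of elements of $\cH_k\otimes\ell^2$ are $\la\Delta^2\,\cdot\,,\,\cdot\,\ra$, one finds
\[
	\|P_{\cM^\perp}(u,v)\|^2=\la(u,v),P_{\cM^\perp}(u,v)\ra=\|u\|^2+\la\Delta^2v,v\ra-\la w,w\ra.
\]
Expanding $\|w\|^2$ and using $I-\Delta^2=M_b^*M_b$ with the same intertwining identity collapses the right-hand side to $\la(I-M_bM_b^*)\tilde u,\tilde u\ra=\|D\tilde u\|_{\cH_k}^2$. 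On the other hand $D^2\tilde u=D(D\tilde u)$ with $D\tilde u\in\Ran D\subset\cH_k\ominus\ker D$, so the norm formula for $\cH_k(b)$ yields $\|D^2\tilde u\|_{\cH_k(b)}^2=\|D\tilde u\|_{\cH_k}^2$. Comparing shows $J$ is isometric on $\cD$; moreover, as $\tilde u$ runs over $\cH_k$ the image $\{D^2\tilde u\}=\Ran(I-M_bM_b^*)$ is dense in $\cH_k(b)=\Ran D$ (apply the unitary $D\colon\cH_k\ominus\ker D\to\cH_k(b)$ to the dense set $\Ran D$).

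It remains to extend the isometry from $\cD$ to all of $\cM^\perp$ and to deduce surjectivity, and this passage is the main obstacle: a general $(u,v)\in\cM^\perp$ has second coordinate only in the completion $\cL_\Delta$, where \eqref{eq:proj} is not directly available. I would argue by approximation: choose $(u_n,v_n)\in\cD$ with $(u_n,v_n)\to(u,v)$ in $\cH_k\oplus\cL_\Delta$; since $\cD$ is a subspace and $J$ is isometric on it, $(u_n)$ is Cauchy in $\cH_k(b)$ and converges there to some $\hat u$. As $\cH_k(b)\hookrightarrow\cH_k$ is contractive we also have $u_n\to\hat u$ in $\cH_k$, while $u_n\to u$ in $\cH_k$; hence $u=\hat u\in\cH_k(b)$ and $\|u\|_{\cH_k(b)}=\lim\|u_n\|_{\cH_k(b)}=\lim\|(u_n,v_n)\|=\|(u,v)\|$. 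Thus $J$ is an isometry of $\cM^\perp$ into $\cH_k(b)$ with closed range containing the dense set $\Ran(I-M_bM_b^*)$, so $J$ is onto and therefore unitary. The ``equivalently'' reformulation then follows at once: surjectivity produces for each $f\in\cH_k(b)$ some $(f,g)\in\cM^\perp$; uniqueness of $g$ is injectivity of $J$ (if $(0,g)\in\cM^\perp$ then $\|g\|_{\cL_\Delta}=\|(0,g)\|=\|J(0,g)\|_{\cH_k(b)}=0$); and the displayed norm identity is precisely $\|f\|_{\cH_k(b)}^2=\|(f,g)\|^2=\|f\|_{\cH_k}^2+\|g\|_{\cL_\Delta}^2$.
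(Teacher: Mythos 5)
Your proof is correct, and it reaches the conclusion by a genuinely different route than the paper. The paper stays entirely on the reproducing--kernel side: it first observes that $(0,g)\in\cM^\perp$ forces $g=0$ (such a $g$ is orthogonal to the dense subspace $\cH_k\otimes\ell^2$ of $\cL_\Delta$), deduces that $\Span\{P_{\cM^\perp}(k_y,0):y\in\cX\}$ is dense in $\cM^\perp$, computes from \eqref{eq:proj} that $P_{\cM^\perp}(k_y,0)=(k_y^b,-b(y)^*k_y)$, and then checks that the Gram matrix of these vectors reproduces the kernel, $\la P_{\cM^\perp}(k_y,0),P_{\cM^\perp}(k_z,0)\ra_{\cH_k\oplus\cL_\Delta}=k^b_y(z)$; thus $J$ carries a spanning family of $\cM^\perp$ isometrically onto the kernel functions $k^b_y$, whose span is dense in $\cH_k(b)$, and unitarity follows. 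You instead project the dense subspace $\cH_k\oplus(\cH_k\otimes\ell^2)$ and do an operator computation: $JP_{\cM^\perp}(u,v)=(I-M_bM_b^*)(u-M_bv)$ together with $\|P_{\cM^\perp}(u,v)\|^2=\|(I-M_bM_b^*)^{1/2}(u-M_bv)\|_{\cH_k}^2$, which you convert into the $\cH_k(b)$-norm identity via the realization of $\cH_k(b)$ as $\Ran(I-M_bM_b^*)^{1/2}$ with the pullback norm. This makes your argument depend on that realization, which the paper asserts (``one can verify'') at the start of \S 3 but never proves, whereas the paper's proof needs only the definition of $\cH_k(b)$ as the RKHS with kernel $k^b$. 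What your route buys in exchange: the two points the paper compresses into ``The result follows'' are written out in full --- the extension of the isometry from a dense subspace to all of $\cM^\perp$ (identifying the limit through the contractive embedding $\cH_k(b)\hookrightarrow\cH_k$), and surjectivity via the closed range containing the dense set $\Ran(I-M_bM_b^*)$; moreover you do not need the paper's preliminary observation that $(0,g)\in\cM^\perp\Rightarrow g=0$, recovering it at the end as a consequence of isometry. I verified your individual computations --- the intertwining $M_b\Delta^2=(I-M_bM_b^*)M_b$, the collapse of $\|u\|^2+\la\Delta^2v,v\ra-\|w\|^2$ to $\|(I-M_bM_b^*)^{1/2}(u-M_bv)\|_{\cH_k}^2$, and the density of $\Ran(I-M_bM_b^*)$ in $\cH_k(b)$ --- and they all check out.
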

\begin{proof}
Let  us note first  that   if $(0,g)\in\cM^\perp,~g\in \cL_\Delta$, then 
$g=0$. Indeed, in this case $g$ is orthogonal in $\cL_\Delta$ to the dense 
subspace $\cH_k\otimes \ell^2$, hence $g=0$. In particular, if 
$P_{\cM^\perp}=I-P_\cM$, this implies that the linear span of 
$\{P_{\cM^\perp}(k_y,0):~y\in \cX\}$ is dense in $\cM^\perp$.
By \cref{M_perp} (ii) we can easily  calculate the elements of this set. 
Indeed, according to \eqref{eq:proj} we have \[
	P_\cM(k_y,0) = (bw, w), \qquad w = M_b^*k_y,
\]
so that \[
	(I-P_\cM)(k_y,0)
	= P_{\cM^\perp}( k_y,0)
	= (k_y-bb^*(y)k_y, -b^*(y)k_y)
	= (k_y^b,-b^*(y)k_y),
\]
where $k^b$ denotes the reproducing kernel in $\cH_k(b)$. But then
\begin{align*}
	&\la P_{\cM^\perp}(k_y,0),P_{\cM^\perp}(k_z,0)\ra_{\cH_k\oplus\cL_\Delta}
	\\&=\la k_y-bb(y)^*k_y, k_z-bb(z)^*k_z\ra_{\cH_k}+\la\Delta^2 b(y)^*k_y,b(z)^*k_z\ra_{\cH_k\otimes\ell^2}
	\\&= k_y(z)-b(z)b(y)^*k_y(z)-\overline{b(y)b(z)^*k_z(y)}
	+\la bb(y)^*k_y, bb(z)^*k_z\ra_{\cH_k}
	\\&\quad + b(z)b(y)^*k_y(z)-\la bb(y)^*k_y, bb(z)^*k_z\ra_{\cH_k}
	\\&= k_y(z)-b(z)b(y)^*k_y(z)=k_y^b(z).
\end{align*}
Thus the map $J$ in the statement preserves the norm on a dense subset of 
$\cM^\perp$ and maps it onto a dense subset of $\cH_k(b)$. The result follows.
	
\end{proof}
The hypotheses that $\Delta$ is injective might seem restrictive. It does not 
apply to inner functions $b$ in the case of the Hardy space on $\bD$, or, 
more generally to sub-inner multipliers of $\cH_k$. It turns out that smaller 
spaces than $H^2$ on $\bD$, like for example standard weighted Dirichlet 
spaces possess a large set of  sub-inner multipliers 
(see \cite{aleman_freeouter}, Theorem 14.9). On the other hand, recall that 
weighted Bergman spaces do not have such multipliers.

\section{Approximation results}
\subsection{The general approximation theorem} Given a space $\cH_k(b)$,  one 
of the main difficulties in understanding its structure is the lack of 
knowledge about its elements other than the reproducing kernels. Therefore 
it is natural to relate this problem to elements in the larger (''known'') 
space $\cH_k$, in particular to the original kernels $k_y, ~y\in X$. We 
shall consider  the map $J$ from Theorem \ref{thm:Hkb-unitary}, more precisely, 
its inverse $J^{-1}:\cH_k(b)\to \cH_k\oplus\cL_\Delta$ and denote, as in the 
previous subsection, by $\cM$ the closed subspace of $\cH_k\oplus\cL_\Delta$ 
with \[
	\cM=\{(bg,g):~g\in \cH_k\otimes \ell^2\}.
\]
\begin{proposition}\label{kernelsin space} 
Assume that $\Delta$ is injective. If $y\in \cX$ then $k_y\in \cH_k(b)$ if 
and only if there exists $c_y>0$ such that \begin{equation}\label{eq:kernelmate1}
	|b(y)g(y)|\le c_y\|g\|_\Delta,\quad g\in \cH_k\otimes \ell^2.
\end{equation}
In this case, there exists a unique $l_y\in \cL_\Delta$ with
\begin{equation}\label{eq:kernelmate}
	\la g, l_y\ra_{\cL_\Delta} = -b(y)g(y), \quad  g\in\cH_k\otimes \ell^2,
\end{equation}
and $J^{-1} k_y=(k_y,l_y)$.
\end{proposition}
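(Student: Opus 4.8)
The plan is to read membership of $k_y$ in $\cH_k(b)$ directly off the unitary model of Theorem \ref{thm:Hkb-unitary}, convert it into an orthogonality relation in $\cH_k\oplus\cL_\Delta$, and then recognize the existence of the partner vector $l_y$ as a Riesz representation statement whose boundedness hypothesis is precisely \eqref{eq:kernelmate1}. First I would use that $J:\cM^\perp\to\cH_k(b)$, $J(u,v)=u$, is a bijective isometry. Its range is the set of first coordinates of elements of $\cM^\perp$, so $k_y\in\cH_k(b)$ if and only if there is some $g\in\cL_\Delta$ with $(k_y,g)\in\cM^\perp$; injectivity of $J$ makes such a $g$ unique and forces $J^{-1}k_y=(k_y,g)$. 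Thus the whole proposition reduces to deciding when such a partner exists and to identifying it with $l_y$.

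Next I would unwind the condition $(k_y,g)\in\cM^\perp$. Since $\cM$ is closed and, by \eqref{M_def} and Proposition \ref{M_perp}, equals $\{(bf,f):f\in\cH_k\otimes\ell^2\}$, it suffices to test orthogonality against these generators. Computing
\[
	\la(k_y,g),(bf,f)\ra_{\cH_k\oplus\cL_\Delta}
	=\la k_y,bf\ra_{\cH_k}+\la g,f\ra_{\cL_\Delta},
\]
and using the reproducing property for $bf\in\cH_k$ (equivalently \eqref{eq:eigenvec}, $M_b^*k_y=b(y)^*k_y$) to evaluate $\la k_y,bf\ra_{\cH_k}=\overline{(bf)(y)}=\overline{b(y)f(y)}$, the requirement that this inner product vanish for all $f$ becomes, after taking conjugates, $\la f,g\ra_{\cL_\Delta}=-b(y)f(y)$ for every $f\in\cH_k\otimes\ell^2$. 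Renaming the test vector, this is exactly \eqref{eq:kernelmate} with $l_y:=g$.

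Finally I would invoke Riesz representation on $\cL_\Delta$. The assignment $g\mapsto-b(y)g(y)$ is a linear functional on the dense subspace $\cH_k\otimes\ell^2$, hence it is represented by a unique $l_y\in\cL_\Delta$ as in \eqref{eq:kernelmate} if and only if it is bounded in the $\|\cdot\|_\Delta$ norm, i.e.\ if and only if $|b(y)g(y)|\le c_y\|g\|_\Delta$ for some $c_y>0$, which is \eqref{eq:kernelmate1}; continuity lets the functional extend to all of $\cL_\Delta$ and Riesz then supplies $l_y$. Combined with the first step this yields both implications of the equivalence together with the formula $J^{-1}k_y=(k_y,l_y)$.

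I expect the only delicate point to be bookkeeping rather than a genuine obstacle: one must track conjugations carefully so that the functional actually being represented is linear (not conjugate-linear) in the correct slot, and one must keep in mind that $\|\cdot\|_\Delta\le\|\cdot\|_{\cH_k\otimes\ell^2}$, so that $\|\cdot\|_\Delta$-boundedness of the evaluation-type functional $g\mapsto b(y)g(y)$ is a genuine restriction even though point evaluation is automatically bounded in the norm of $\cH_k\otimes\ell^2$. This gap between the two norms is exactly why \eqref{eq:kernelmate1} can fail, and it is what makes the criterion nontrivial.
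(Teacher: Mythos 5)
Your proof is correct and follows essentially the same route as the paper's: both read membership of $k_y$ off the unitary $J$ of Theorem \ref{thm:Hkb-unitary}, translate $(k_y,l_y)\in\cM^\perp$ into the relation \eqref{eq:kernelmate} via the reproducing property $M_b^*k_y=b(y)^*k_y$, and obtain $l_y$ in the converse direction from the Riesz representation theorem applied to the functional $g\mapsto -b(y)g(y)$, bounded precisely when \eqref{eq:kernelmate1} holds. The paper's proof is just a terser version of yours, leaving the orthogonality computation and the Cauchy--Schwarz step implicit.
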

\begin{proof}
If $k_y\in \cH_k(b)$ and  $J^{-1} k_y=(k_y,l_y)\in \cM^\perp$, then $l_y$ 
must satisfy \eqref{eq:kernelmate} which implies \eqref{eq:kernelmate1}. 
Conversely, if \eqref{eq:kernelmate1} holds, the existence and uniqueness of $
l_y$ follows by the Riesz representation theorem and Theorem 
\ref{thm:Hkb-unitary} gives $k_y\in \cH_k(b)$.
\end{proof}
Under the assumption $k_y\in \cH(b),~y\in \cX$, the natural question 
regarding approximation in this space is whether the linear span of these 
functions is dense. We provide a sufficient condition  for this conclusion.
\begin{theorem}\label{thm:Hkb-k-dense}
Suppose that $\Delta$ is injective and that:\begin{enumerate}[label=(\roman*)]
\item If $y\in \cX,~e\in \ell^2$ there exists $d_y>0$ such that 
\end{enumerate}
\vspace{-0.5em}
      \begin{equation}\label{ev_Ldelta1}
      	|\la g(y),e\ra_{\ell^2}|\le d_y\|g\|_\Delta\|e\|_2,\quad g\in \cH_k\otimes \ell^2.
      \end{equation}
      Equivalently, point evaluations are continuous on $\cL_\Delta$.
\begin{enumerate}[label=(\roman*), resume]
\item If $f\in \cL_\Delta$ and $bf\in \cH_k$ then $f\in \cH_k\otimes \ell^2.$
\end{enumerate}
Then the linear span of $\{k_y:y\in\cX\}$ is  contained and  dense in $\cH_k(b)$.
\end{theorem}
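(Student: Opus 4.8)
The plan is to handle containment and density separately, in both cases transporting the problem to $\cM^\perp$ via the unitary $J\colon\cM^\perp\to\cH_k(b)$ of Theorem \ref{thm:Hkb-unitary} and the description of $J^{-1}k_y$ from Proposition \ref{kernelsin space}. For the containment $k_y\in\cH_k(b)$ I would simply verify the criterion \eqref{eq:kernelmate1} using hypothesis (i). The point is that $b(y)g(y)=\la g(y),b(y)^*\ra_{\ell^2}$, so applying (i) with $e=b(y)^*$ gives $|b(y)g(y)|\le d_y\|b(y)\|_{\ell^2}\|g\|_\Delta$; since $\|b(y)\|_{\ell^2}\le\|M_b\|\le 1$, this is exactly \eqref{eq:kernelmate1} with $c_y=d_y\|b(y)\|_{\ell^2}$. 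Thus every $k_y$ lies in $\cH_k(b)$ and $J^{-1}k_y=(k_y,l_y)$.

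Because $J$ is unitary, the density assertion is equivalent to the density of $\Span\{(k_y,l_y):y\in\cX\}$ in $\cM^\perp$, i.e.\ to showing that any $(u,v)\in\cM^\perp$ orthogonal to every $(k_y,l_y)$ must vanish. Here hypothesis (i) does its structural work: it says exactly that the evaluation $g\mapsto g(y)$ extends from $\cH_k\otimes\ell^2$ to a bounded $\ell^2$-valued map on $\cL_\Delta$, so each $v\in\cL_\Delta$ has well-defined values $v(y)\in\ell^2$ with $\|v(y)\|_{\ell^2}\le d_y\|v\|_\Delta$. Approximating $v$ by a sequence in $\cH_k\otimes\ell^2$ and passing to the limit in the defining relation \eqref{eq:kernelmate} yields $\la v,l_y\ra_{\cL_\Delta}=-b(y)v(y)$. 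Together with the reproducing identity $\la u,k_y\ra_{\cH_k}=u(y)$, the orthogonality $\la(u,v),(k_y,l_y)\ra=0$ reduces to $u(y)=b(y)v(y)$ for all $y\in\cX$, that is, $u=bv$ as functions on $\cX$.

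Now hypothesis (ii) finishes the argument: since $v\in\cL_\Delta$ and $bv=u\in\cH_k$, we conclude $v\in\cH_k\otimes\ell^2$, whence $(u,v)=(M_bv,v)\in\cM$ and therefore $(u,v)\in\cM\cap\cM^\perp=\{0\}$. The step I expect to require the most care is the limit computation of $\la v,l_y\ra$: one must check that the extended point evaluation furnished by (i) is compatible with the relation \eqref{eq:kernelmate} that defines $l_y$, so that the $\ell^2$-value $v(y)$ appearing there is genuinely the one produced by the continuous extension. Everything else is bookkeeping, with hypothesis (ii) providing the decisive passage from $\cL_\Delta$ back into $\cM$.
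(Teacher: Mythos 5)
Your proposal is correct and follows essentially the same route as the paper: containment of the $k_y$ via Proposition \ref{kernelsin space} and hypothesis (i), and density by showing that any element of $\cM^\perp$ orthogonal to all $(k_y,l_y)$ satisfies $u=bv$ pointwise, so that hypothesis (ii) forces it into $\cM\cap\cM^\perp=\{0\}$. The one step you flag as delicate---extending the relation \eqref{eq:kernelmate} from $\cH_k\otimes\ell^2$ to all of $\cL_\Delta$ via the continuous point evaluations furnished by (i)---is used implicitly in the paper's computation $\la J^{-1}f,J^{-1}k_y\ra = f(y)-b(y)g(y)$, so making it explicit is a welcome refinement rather than a different method.
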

\begin{proof} 
The hypothesis (i) together with \cref{kernelsin space} imply that 
$k_y\in \cH_k(b),\,y\in \cX$. Now suppose that $f\in\cH_k(b)$ annihilates 
$k_y,~y\in \cX$. Let \[
	J^{-1}f=(f,g), \,\, J^{-1}k_y=(k_y,l_y)\in \cM^\perp,\quad y\in \cX,
\]
and apply Proposition \ref{kernelsin space} to conclude that 
\eqref{eq:kernelmate} holds for all $y\in \cX$. Thus \[ 
	0
	= \la f,k_y\ra_{\cH_k(b)}
	= \la J^{-1}f,J^{-1}k_y\ra_{\cH_k\oplus\cL_\Delta}
	= f(y)-b(y)g(y),
\]
for all $y\in \cX$, i.e. $f=bg\in \cH_k$. But then by assumption  (ii), we 
have  $g\in \cH_k\otimes\ell^2$ and $J^{-1}f=(f,g)=(bg,g)\in \cM$ which 
implies $(f,g)=0$ and the result follows.
\end{proof}
\begin{remark}\label{basic assumptions}
\phantom{}
\begin{enumerate}[label=\arabic*), leftmargin=*, nolistsep]
\item	In the classical case when $\cH_k$ equals the Hardy space $H^2$ and $b$ 
	is a scalar analytic function bounded by $1$ in $\bD$, assumption (i) is 
	equivalent to the fact that $b$ is not extremal in $H^\infty$. This follows 
	immediately by the standard Szeg\"o theorem. Moreover, in this case there 
	exists an outer function $a$ with $|a|^2+|b|^2=1$ a.e. on the unit circle, 
	which easily implies that (ii) holds automatically.
\item	In the general case, an analogue of the above condition is to assume 
	that there exists a multiplier $a$ such that $I-M_b^*M_b\ge M_a^*M_a$ 
	(see for example \cite{jury-martin_extremal} and the discussion in the 
	Introduction). If $a$ is zero-free then this assumption implies 
	assumption (i) in the theorem above. However, we do not know whether 
	it implies (ii) as well.
\end{enumerate}\end{remark}

\section{Weighted sub-Bergman spaces}
Let $\Omega$ be a domain of $\bC^d$, $\mu$ be a positive measure on $\Omega$ 
such that the Bergman space $L^2_a(\mu)$ is a closed subspace of $L^2(\mu)$ 
with reproducing kernel $k^\mu$ (see \cref{sec:weighted-bergman}). For an 
analytic row-contraction $b$ on $\Omega\subset\bC^d$ we consider the  sub-
Bergman space $\cH_{k^\mu}(b)$. The aim of this subsection is to show that 
the general result given in \cref{thm:Hkb-k-dense} does always apply to these 
spaces.
\begin{theorem}\label{thm-Bergman-k-dense}
Assume that $\mu$ satisfies \eqref{same Bergman} and
let $b$ be any analytic row-contraction on $\Omega$ with $\|b(z_0)\|<1$ for 
some $z_0\in\Omega$. Then the linear span of  $\{k^\mu_z:~z\in \Omega\}$ is 
dense in $\cH_{k^\mu}(b)$.
\end{theorem}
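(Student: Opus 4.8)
The goal is to verify the two hypotheses of Theorem~\ref{thm:Hkb-k-dense} for the weighted Bergman setting, so that its conclusion—density of $\Span\{k^\mu_z:z\in\Omega\}$ in $\cH_{k^\mu}(b)$—follows immediately. The plan is therefore to check that $\Delta=(I-M_b^*M_b)^{1/2}$ is injective, that point evaluations are continuous on $\cL_\Delta$ (hypothesis (i)), and that the implication in hypothesis (ii) holds. The key structural input is Proposition~\ref{pointev}: since $\mu$ satisfies \eqref{same Bergman} and $b$ is non-constant with $\|b(z_0)\|<1$, for every $z\in\Omega$ there is $c_z>0$ with
\[
	\|f\|_{L^2_a(\mu)\otimes\ell^2}^2-\|bf\|^2\ge c_z\|f(z)\|_{\ell^2}^2,
	\qquad f\in L^2_a(\mu)\otimes\ell^2.
\]

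First I would observe that the left-hand side above is exactly $\|\Delta f\|^2=\|f\|_\Delta^2$, because $\|f\|^2-\|bf\|^2=\la(I-M_b^*M_b)f,f\ra=\|\Delta f\|^2$. Thus Proposition~\ref{pointev} reads $\|f(z)\|_{\ell^2}^2\le c_z^{-1}\|f\|_\Delta^2$, which is precisely the pointwise estimate needed. This immediately gives injectivity of $\Delta$: if $\|f\|_\Delta=0$ then $f(z)=0$ for every $z\in\Omega$, so $f=0$ as an analytic function. It also gives hypothesis (i) of Theorem~\ref{thm:Hkb-k-dense}: for $e\in\ell^2$, Cauchy--Schwarz yields $|\la f(z),e\ra_{\ell^2}|\le\|f(z)\|_{\ell^2}\|e\|_2\le c_z^{-1/2}\|f\|_\Delta\|e\|_2$, so we may take $d_z=c_z^{-1/2}$. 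Equivalently, point evaluations extend continuously to the completion $\cL_\Delta$, and each element of $\cL_\Delta$ is realized as an $\ell^2$-valued analytic function on $\Omega$.

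The main obstacle is hypothesis (ii): if $f\in\cL_\Delta$ and $bf\in\cH_{k^\mu}$, then $f\in L^2_a(\mu)\otimes\ell^2$. Here I would argue as follows. By the previous paragraph, an element $f\in\cL_\Delta$ is a genuine $\ell^2$-valued analytic function on $\Omega$, obtained as a $\|\cdot\|_\Delta$-limit of a sequence $(f_n)$ in $L^2_a(\mu)\otimes\ell^2$; by the pointwise estimate this convergence is locally uniform on $\Omega$. The estimate of Proposition~\ref{pointev} persists in the limit, giving $\|f(z)\|_{\ell^2}^2\le c_z^{-1}\|f\|_\Delta^2$ for the limit function as well. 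Now assume $bf\in\cH_{k^\mu}=L^2_a(\mu)$. The idea is to recover the full $L^2(\mu)$-norm of $f$ from $\|\Delta f\|^2$ together with control of $\|bf\|$: formally $\|f\|^2=\|\Delta f\|^2+\|bf\|^2$, so the right-hand side is finite precisely when both $f\in\cL_\Delta$ and $bf\in\cH_{k^\mu}$. To make this rigorous I would apply it along the approximating sequence, writing $\|f_n\|^2=\|\Delta f_n\|^2+\|bf_n\|^2$, show $\Delta f_n$ converges (it is Cauchy by definition of $\cL_\Delta$) and that $bf_n\to bf$ in $L^2_a(\mu)$ (using $bf\in\cH_{k^\mu}$ and the locally uniform convergence of $f_n$ to identify the limit), conclude that $(f_n)$ is bounded in $L^2_a(\mu)\otimes\ell^2$, extract a weakly convergent subsequence with limit $\tilde f\in L^2_a(\mu)\otimes\ell^2$, and finally identify $\tilde f=f$ via the continuity of point evaluations on both spaces. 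This identification step—matching the weak $L^2(\mu)$-limit with the locally uniform $\cL_\Delta$-limit—is where the argument requires care, since one must ensure the two notions of limit agree pointwise.

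With both hypotheses verified and $\Delta$ injective, Theorem~\ref{thm:Hkb-k-dense} applies verbatim and yields the density of $\Span\{k^\mu_z:z\in\Omega\}$ in $\cH_{k^\mu}(b)$, completing the proof.
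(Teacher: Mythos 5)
Your overall strategy is the same as the paper's: verify hypotheses (i) and (ii) of Theorem~\ref{thm:Hkb-k-dense} using Proposition~\ref{pointev}, and your treatment of the injectivity of $\Delta$ and of hypothesis (i) is exactly the paper's argument. (One small omission: Proposition~\ref{pointev} requires $b$ non-constant, so the constant case $\|b\|<1$ must be dispatched separately; it is trivial, since then $k^b$ is a positive multiple of $k^\mu$, and the paper notes this in one line.)

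Your verification of hypothesis (ii), however, has a genuine gap at the step ``show that $bf_n\to bf$ in $L^2_a(\mu)$ (using $bf\in\cH_{k^\mu}$ and the locally uniform convergence of $f_n$).'' Locally uniform convergence of $bf_n$ to a limit which happens to lie in $L^2_a(\mu)$ gives no control on the norms $\|bf_n\|_{L^2(\mu)}$. In the only non-trivial case, namely when $\Delta$ is not bounded below (if it is bounded below, $\cL_\Delta=L^2_a(\mu)\otimes\ell^2$ and (ii) is vacuous), one can pick unit vectors $v_n$ with $\|\Delta v_n\|\to0$ and perturb any approximating sequence by $u_n=v_n/\|\Delta v_n\|^{1/2}$: the perturbed sequence still converges to $f$ in $\cL_\Delta$, hence still converges locally uniformly to $f$ by the pointwise estimates, yet $\|bu_n\|^2=\|u_n\|^2-\|\Delta u_n\|^2\to\infty$. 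So for such a representative sequence your claimed norm convergence is false, the bound $\|f_n\|^2=\|\Delta f_n\|^2+\|bf_n\|^2$ does not stay finite, and the weak-compactness extraction collapses; choosing a ``good'' sequence for which $bf_n\to bf$ in norm is essentially equivalent to the conclusion $f\in L^2_a(\mu)\otimes\ell^2$ and would be circular. The paper circumvents precisely this difficulty with Fatou's lemma: since each $b(z)$ is a row contraction, the integrands $\|f_n(z)\|_{\ell^2}^2-|bf_n(z)|^2$ are non-negative, their integrals converge to $\|f\|_\Delta^2$, and their pointwise liminf dominates $\|f(z)\|_{\ell^2}^2-|bf(z)|^2$ (using $bf_n(z)\to bf(z)$ and $\|f(z)\|_{\ell^2}\le\liminf_n\|f_n(z)\|_{\ell^2}$, the latter from weak pointwise convergence). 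Hence
\begin{equation*}
	\int_\Omega\bigl(\|f(z)\|_{\ell^2}^2-|bf(z)|^2\bigr)\,d\mu(z)\le\|f\|_\Delta^2<\infty,
\end{equation*}
and adding $\int_\Omega|bf|^2\,d\mu<\infty$ gives $f\in L^2_a(\mu)\otimes\ell^2$. This argument needs only the pointwise convergence your sequence actually provides, and it is the missing ingredient in your proof.
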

\begin{proof}
The statement is obvious when $b$ is a constant with $\|b\|<1$, so we may 
assume $b$ is not constant. We want to apply  Theorem \ref{thm:Hkb-k-dense}. 
To this end we need to verify the assumptions (i) and (ii) in that theorem. 
Assumption (i) follows by  an application of Proposition \ref{pointev}. 
To verify (ii), let $g\in \cL_\Delta$ with $bg\in L_a^2(\mu)$ and let $(g_n)$ 
be a sequence in $L_a^2(\mu)\otimes\ell^2$ with $\|g_n-g\|_\Delta\to 0$. By 
assumption (i) we have for each $z\in \Omega$, 
$\lim_{n\to\infty}bg_n(z)=bg(z)$, and that $(g_n(z))$ converges weakly to 
$g(z)$ in $\ell^2$, in particular, $\|g(z)\|_2\le \liminf_{n\to\infty}\|g_n(z)\|_2$.
Then since $b(z)$ is a contractive row contraction we can apply Fatou's lemma 
to obtain \begin{align*}
	&\|g\|_\Delta^2=\lim_{n\to\infty}\|g_n\|_\Delta^2=\lim_{n\to\infty}\int_{\Omega}(\|g_n(z)\|_2^2-|bg_n(z)|^2)d\mu(z)
	\\&\qquad\ge\int_{\Omega}\liminf_{n\to\infty}(\|g_n(z)\|_2^2-|bg_n(z)|^2)d\mu(z)\ge  \int_{\Omega}
	 (\|g(z)\|_2^2-|bg(z)|^2)d\mu(z).
\end{align*}
This gives \[
	\int_{\Omega}\|g(z)\|^2d\mu(z)
	\le \|g\|_\Delta^2+\int_{\Omega}|bg(z)|^2d\mu(z),
\]
that is, assumption (ii) is verified as well. Then the result follows by a 
direct application of Theorem \ref{thm:Hkb-k-dense}.
\end{proof}

\subsection{Approximation by polynomials in standard weighted sub-Bergman spaces on the unit ball}
In this subsection we shall specialize to unitarily invariant weighted 
Bergman spaces on $\bB_d$, the unit ball in $\bC^d$. These are the spaces 
$\cH_\beta$ defined in \S 2.2, where $\beta>d$ corresponding to measures
that satisfy \eqref{same Bergman}. Recall that their reproducing kernel is 
given by \[
	s^\beta(z,w)=\frac1{(1-\la z,w\ra)^\beta}, \quad z,w\in \bB_d.
\]
For such $\beta$ we let  $b$  be an analytic row-contraction with $\|b(0)\|<1$.
The sub-Bergman space $\Hbeta(b)$ is the Hilbert space $\cH_{s^\beta}(b)$, 
that is, the Hilbert space with reproducing kernel \[
	s^\beta(z,w)(1-b(z)b(w)^*) = \frac{1-b(z)b(w)^*}{(1-\la z,w\ra)^\beta}.
\]
Let us note 
first the following property of the multipliers of these spaces.
\begin{proposition}\label{beta:kernel-factored}
Let $\beta>d$ and let $b$ an analytic row-contraction with $\|b(0)\|<1$. Then 
for every $0<\gamma<\beta-d$, $\Mult(\cH_\gamma)$ is contractively contained 
in $\Mult(\cH_\beta(b))$. In particular, if $u$ is analytic in a neighborhood 
of $\overline{\bB_d}$ then $u \in \Mult(\cH_\beta(b))$.
\end{proposition}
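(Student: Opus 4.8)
The plan is to reduce the statement to the kernel factorization in Proposition~\ref{prop:kernel-factored}. For any $0<\gamma<\beta-d$ the reproducing kernel of $\cH_\beta(b)$ factors as
\[
	s^\beta(z,w)(1-b(z)b(w)^*)
	= s^\gamma(z,w)\cdot\bigl(s^{\beta-\gamma}(z,w)(1-b(z)b(w)^*)\bigr).
\]
First I would check that the second factor is itself a reproducing kernel, namely the kernel of $\cH_{\beta-\gamma}(b)$. Since $\gamma<\beta-d$ we have $\beta-\gamma>d$, so $\cH_{\beta-\gamma}$ is a weighted Bergman space on $\Bd$ (see \S\ref{sec:weighted-bergman}), where every bounded analytic row-vector is a multiplier with multiplier norm equal to its supremum norm. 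As $b$ is an analytic row-contraction we have $\sup_z\|b(z)\|_{\ell^2}\le1$, so $b\in\Mult(\cH_{\beta-\gamma}\otimes\ell^2,\cH_{\beta-\gamma})$ with $\|M_b\|\le1$, and the vector form of the multiplier criterion (Proposition~\ref{Multiplier criterion}) gives $s^{\beta-\gamma}(1-bb^*)\gg0$. With this factorization in hand, Proposition~\ref{prop:kernel-factored}\ref{item:mult-inclusion} yields $\Mult(\cH_\gamma)\subset\Mult(\cH_\beta(b))$ contractively, which is the first assertion.

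For the \emph{in particular}, it suffices to produce one $\gamma$ with $0<\gamma<\beta-d$ such that $u\in\Mult(\cH_\gamma)$, after which the first part applies. When $\beta>2d$ one may simply take $\gamma\in(d,\beta-d)$; then $\cH_\gamma$ is a weighted Bergman space, $\Mult(\cH_\gamma)=H^\infty$, and $u$, being bounded on $\overline{\Bd}$, lies in it. The remaining case $d<\beta\le2d$ forces $\gamma<d$, where $\Mult(\cH_\gamma)$ is strictly smaller than $H^\infty$, and this is the step I expect to be the main obstacle. I would dispose of it by proving the following self-contained fact: every function analytic in a neighborhood of $\overline{\Bd}$ is a multiplier of $\cH_\gamma$ for every $\gamma>0$.

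To prove that fact, expand $u=\sum_{n\ge0}p_n$ into homogeneous polynomials $p_n$ of degree $n$. If $u$ is analytic on $\rho\Bd$ with $\rho>1$, a slicewise Cauchy estimate gives $\|p_n\|_{L^\infty(\Bd)}\le M_\rho\,\rho^{-n}$, so the homogeneous parts decay geometrically. The key quantitative input is a polynomial bound on the multiplier norm of a single $p_n$. Fixing some $\delta>\max\{d,\gamma\}$, I would compare $\cH_\gamma$ and $\cH_\delta$ on each homogeneous subspace: since the monomials are orthogonal in $\cH_\gamma$ with $\|z^\alpha\|_{\cH_\gamma}^2=\alpha!/(\gamma)_{|\alpha|}$ (where $(\gamma)_m=\gamma(\gamma+1)\cdots(\gamma+m-1)$), a homogeneous $f_k$ of degree $k$ satisfies $\|f_k\|_{\cH_\gamma}^2=\lambda_k\|f_k\|_{\cH_\delta}^2$ with $\lambda_k=(\delta)_k/(\gamma)_k$. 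Because multiplication by the homogeneous $p_n$ raises degree by exactly $n$, and because $\|p_n\|_{\Mult(\cH_\delta)}=\|p_n\|_\infty$ (weighted Bergman, $\delta>d$), a direct computation yields
\[
	\|M_{p_n}\|_{\Mult(\cH_\gamma)}^2
	\le\|p_n\|_\infty^2\,\sup_{k\ge0}\frac{\lambda_{n+k}}{\lambda_k}
	=\|p_n\|_\infty^2\,\frac{(\delta)_n}{(\gamma)_n},
\]
the supremum being attained at $k=0$ since each factor $(\delta+k+j)/(\gamma+k+j)$ decreases in $k$. As $(\delta)_n/(\gamma)_n$ grows only polynomially in $n$ while $\|p_n\|_\infty$ decays geometrically, the series $\sum_n\|M_{p_n}\|_{\Mult(\cH_\gamma)}$ converges. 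Hence $u=\sum_n p_n$ converges in multiplier norm and $u\in\Mult(\cH_\gamma)$, completing the argument.
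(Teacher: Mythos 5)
Your proposal is correct and takes essentially the same route as the paper: the identical factorization $s^\beta(1-bb^*)=s^\gamma\cdot\bigl(s^{\beta-\gamma}(1-bb^*)\bigr)$, justified by $\beta-\gamma>d$ and the fact that bounded analytic row-vectors are contractive multipliers of weighted Bergman spaces, followed by Proposition~\ref{prop:kernel-factored}~(i). The only difference is that the paper dismisses the statement that functions analytic in a neighborhood of $\overline{\bB_d}$ are multipliers of $\cH_\gamma$ for every $\gamma>0$ as well known and omits the details, whereas you supply a correct proof of it (geometric decay of the homogeneous parts combined with the bound $\|M_{p_n}\|_{\Mult(\cH_\gamma)}\le\|p_n\|_\infty\bigl((\delta)_n/(\gamma)_n\bigr)^{1/2}$, which indeed grows only polynomially in $n$).
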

\begin{proof} 
The first assertion is a direct application of Proposition 
\ref{prop:kernel-factored} (i). The fact that analytic functions in a 
neighborhood of $\overline{\bB_d}$  are multipliers of 
$\cH_\gamma,~\gamma>0$, is well known and easy to prove. We omit the details.
\end{proof}
With this observation in hand we can turn to the approximation result for 
these spaces. It actually shows that within this context Sarason's dichotomy 
does not occur.
\begin{corollary}\label{polynomial-approx-Bergman}
Let $\beta>d$ and let $b$ an analytic row-contraction with $\|b(0)\|<1$. Then 
polynomials form a dense subset of $\cH_\beta(b)$. Moreover, for any $e\in\ell^2$,
the function $z\mapsto b(z)e$ belongs to $\Hbeta(b)$.
\end{corollary}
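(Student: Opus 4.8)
The plan is to establish the two assertions separately, using the density of kernels from \cref{thm-Bergman-k-dense} as the backbone of the first and the pointwise defect estimate of \cref{pointev} for the second. Since $\beta>d$, the space $\cH_\beta=L^2_a(\mu)$ corresponds to a measure satisfying \eqref{same Bergman}, so both of these results apply; the case of a constant $b$ with $\|b\|_\infty<1$ is trivial and I would dispose of it at the outset. For the density of polynomials I would first record two facts. By \cref{thm-Bergman-k-dense} the linear span of $\{s^\beta_w:w\in\bB_d\}$ is dense in $\cH_\beta(b)$, so it suffices to approximate each $s^\beta_w$ by polynomials. Moreover the constant function $1=s^\beta_0$ is one of these kernels, hence $1\in\cH_\beta(b)$, and by \cref{beta:kernel-factored} every polynomial is a multiplier of $\cH_\beta(b)$; writing $p=M_p 1$ then shows the polynomials are contained in $\cH_\beta(b)$.

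To obtain the approximation, fix $w$ and let $p_N$ be the partial sums of the homogeneous Taylor expansion of $s^\beta_w(z)=(1-\la z,w\ra)^{-\beta}$, which is holomorphic on a neighbourhood of $\overline{\bB_d}$. Since $s^\beta_w-p_N$ is a multiplier, I would estimate
\[
	\|s^\beta_w-p_N\|_{\cH_\beta(b)}=\|M_{s^\beta_w-p_N}1\|_{\cH_\beta(b)}\le\|M_{s^\beta_w-p_N}\|_{\Mult(\cH_\beta(b))}\,\|1\|_{\cH_\beta(b)},
\]
and by the contractive containment $\Mult(\cH_\gamma)\subset\Mult(\cH_\beta(b))$ from \cref{beta:kernel-factored} (fix any $0<\gamma<\beta-d$) it is enough that the tails $s^\beta_w-p_N$ tend to $0$ in $\Mult(\cH_\gamma)$. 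This is the one analytic input to supply: for $u$ holomorphic on $\rho\bB_d$ with $\rho>1$, the homogeneous parts satisfy the Cauchy estimate $\|u_n\|_{\infty,\bB_d}\le M r^{-n}$ for $1<r<\rho$, while the $\Mult(\cH_\gamma)$-norm of a degree-$n$ homogeneous polynomial grows at most polynomially in $n$; hence $\sum_n\|u_n\|_{\Mult(\cH_\gamma)}<\infty$ and the Taylor series converges in multiplier norm. This summability is exactly the standard proof that such functions are multipliers, so I regard it as routine, though it is the technical heart of the first assertion.

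For the second assertion I would write $b(\cdot)e=M_b(e\mathbf 1)$, where $e\mathbf 1\in\cH_\beta\otimes\ell^2$ is the constant function, and verify membership via the description $\cH_\beta(b)=\Ran(I-M_bM_b^*)^{1/2}$ together with the criterion that $f\in\Ran(I-M_bM_b^*)^{1/2}$ precisely when $|\la f,g\ra_{\cH_\beta}|^2\le C\la(I-M_bM_b^*)g,g\ra_{\cH_\beta}$ for some $C$ and all $g$. As $\la M_b(e\mathbf 1),g\ra=\la e,(M_b^*g)(0)\ra_{\ell^2}$, Cauchy--Schwarz reduces the claim to $\|(M_b^*g)(0)\|_{\ell^2}^2\le C'\la(I-M_bM_b^*)g,g\ra$. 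Here \cref{pointev} supplies $I-M_b^*M_b\ge c_0E_0^*E_0$ on $\cH_\beta\otimes\ell^2$, where $E_0f=f(0)$; conjugating by $M_b$ and using that $P:=M_bM_b^*$ obeys $P(I-P)\le I-P$ gives
\[
	I-M_bM_b^*\ \ge\ M_bM_b^*(I-M_bM_b^*)\ \ge\ c_0\,(E_0M_b^*)^*(E_0M_b^*),
\]
which is exactly the required bound. The crucial point, and what I expect to be the main obstacle, is that this argument never invokes $\|M_b\|<1$: the Bergman multiplier norm equals the supremum norm, so $\|M_b\|$ may well equal $1$ and the naive Douglas-lemma inclusion $\Ran M_b\subset\Ran(I-M_bM_b^*)^{1/2}$ is unavailable. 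The pointwise defect estimate of \cref{pointev} is precisely the device that circumvents the absence of a strict contraction.
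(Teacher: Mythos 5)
Your proposal is correct, and it splits naturally into two halves of different character.

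For the density of polynomials you follow essentially the paper's own route: both arguments start from the density of the kernels $s^\beta_w$ given by \cref{thm-Bergman-k-dense}, extract $1\in\cH_\beta(b)$, and then transfer polynomial approximation of $s^\beta_w$ from a smaller unitarily invariant space into $\cH_\beta(b)$. The only difference is cosmetic: the paper embeds $\cH_\gamma$ boundedly into $\cH_\beta(b)$ (via $1\in\cH_{\beta-\gamma}(b)$ and \cref{prop:kernel-factored}~(ii)) and approximates the kernels in the $\cH_\gamma$-norm, whereas you approximate in the $\Mult(\cH_\gamma)$-norm and apply the tails to the function $1$. Both versions rest on the same routine summability estimate for the homogeneous Taylor parts of a function holomorphic past $\bdary\Bd$, so this half matches the paper.

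The second assertion is where you genuinely diverge. The paper stays inside kernel algebra: multiplying the reproducing kernel $z\mapsto s^\beta_w(z)(1-b(z)b(w)^*)$ by the invertible multiplier $s_w^{-\beta}$ shows $b(\cdot)b(w)^*\in\Hbeta(b)$, and then an orthogonal decomposition $e=e'+e''$ with $e''\perp b(w)^*$ for all $w$ reduces the claim to these functions. You instead use the operator-range description $\cH_\beta(b)=\Ran(I-M_bM_b^*)^{1/2}$, the Douglas-type criterion $|\la f,g\ra|^2\le C\la(I-M_bM_b^*)g,g\ra$, and \cref{pointev} at $z=0$ conjugated by $M_b$, together with the inequality $P(I-P)\le I-P$ for $P=M_bM_b^*$; all steps check out, and your diagnosis that the obstacle is $\|M_b\|=1$ (so the naive inclusion $\Ran M_b\subset\Ran(I-M_bM_b^*)^{1/2}$ is unavailable) is exactly right. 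Each approach buys something. The paper's is more explicit and elementary: it exhibits concrete functions $b(\cdot)b(w)^*$ in the space with no operator-range theory. Yours is more robust: it works verbatim for any weighted Bergman space $L^2_a(\mu)$ satisfying \eqref{same Bergman}, not just $\cH_\beta$, and it bypasses a limiting step that the paper's decomposition quietly requires --- for a general $e$ the vector $e'$ lies only in the \emph{closed} span of $\{b(w)^*:w\in\Bd\}$, so $b(\cdot)e'$ is a priori only a pointwise limit of functions known to lie in $\Hbeta(b)$, and one needs a uniform norm bound (such as the one your operator inequality provides) to conclude membership. In that sense your argument is not just an alternative but arguably tighter on this point.
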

\begin{proof}
By Theorem \ref{thm-Bergman-k-dense}, $\cH_\beta(b)$ contains all kernels 
$s_w^\beta,~w\in \bB_d$, of $\cH_\beta$. By \cref{beta:kernel-factored} we 
have  $s_w^{-\beta}\in \Mult(\cH_\beta(b))$, hence 
$1=s_w^{-\beta}s_w^\beta\in \cH_\beta(b)$. The same argument shows that 
$1\in\cH_{\beta-\gamma}(b)$ whenever $\gamma>0,~ \beta-\gamma>d$. But then 
\cref{prop:kernel-factored} (ii) implies that for such $\gamma$, $\cH_\gamma$ 
is continuously contained in $\cH_\beta(b)$. Since the kernels $s_w^\beta$ 
can be approximated by polynomials in $\cH_\gamma$ the same holds in 
$\cH_\beta(b)$ and another application of Theorem \ref{thm-Bergman-k-dense} 
shows that the closure of polynomials in $\cH_\beta(b)$  contains the dense 
set $\text{span}\{s_w^\beta:~w\in \bB_d\}$ which concludes the proof.

The previous paragraph shows that $z\mapsto b(z)b(w)^*$ belongs to $\Hbeta(b)$
for any $w\in\Bd$. Indeed, this follows by multiplying the kernel function \[
	z\mapsto s^\beta_w(1-b(z)b(w)^*)(z)
\]
by $s^{-\beta}_w$, this proves that $z\mapsto 1-b(z)b(w)^*$ belongs to $\Hbeta(b)$,
but so does the constant function $1$. To conclude, let $e\in\ell^2$ be decomposed
as $e = e' + e''$ where $e'\in\Span\{b(w)^*:w\in\bB_d\}$ and $e''\perp b(w)^*$ for
all $w\in\bB_d$. Note that $e\perp b(w)^*$ is equivalent to $b(w)e=0$, hence the
function $z\mapsto b(z)e = b(z)e'$ belongs to $\Hbeta(b)$.
\end{proof}
Using a completely different method, Chu \cite{chu_density} proved the 
density of polynomials for scalar-valued $b$ in the case when $d=1$ and 
$\beta=2$.

\section{Approximation by polynomials in higher order de Branges-Rovnyak spaces}
Let $k$ be a reproducing kernel on a non-empty set $\cX$, and let $b$ be a 
contractive multiplier of $\cH_k$. In this section we shall consider kernels 
of the form \begin{equation}\label{high-order-ker}
	k^{b,m}(x,y)=k(x,y)(1-b(x)\overline{b(y)})^m,\quad x,y\in \cX,
\end{equation}
where $m$ is a fixed positive integer. Such kernels have been recently 
introduced and thoroughly studied in \cite{gu_hypercontractions}, in 
connection with $m-$hypercontractive operators.

\begin{definition}
A bounded operator $A$ on the Hilbert space $\cH$ is called  an 
\emph{$m$-hypercontraction} if \begin{equation}\label{eq:hypcontrac}
	\sum_{j=0}^n (-1)^j\binom njA^{*j}A^j \geq 0,
\end{equation}
for all $0\leq n\leq m$, where $m,n\in\bN$.
\end{definition}
The result below is a slight simplification of \cite[Corollary 3.6]{gu_hypercontractions}.

\begin{proposition}
Let $b$ be a contractive multiplier of $\cH_k$ such that $|b(x)|<1$ for every
$x\in\cX$. Then $M_b^*$ is a $m$-hypercontraction if and only if 
\begin{equation}\label{eq:b-hypcontrac-improved}
	k^{b,m}(x,y)=k(x,y)(1-b(x)\overline{b(y)})^m\gg 0.
\end{equation}
In this case, \[
	k(x,y)(1-b(x)b(y))^n\gg 0,
\]
for every integer $n$ with $0\le n\le m$.
\end{proposition}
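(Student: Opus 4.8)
The plan is to translate the operator inequalities defining an $m$-hypercontraction directly into positivity of the kernels $k^{b,n}$, using the eigenvector property \eqref{mult-adjoint}. Set $A = M_b^*$ and
\[
	B_n \defeq \sum_{j=0}^n (-1)^j \binom{n}{j} A^{*j}A^j = \sum_{j=0}^n (-1)^j \binom{n}{j} M_b^j (M_b^*)^j ,
\]
so that by definition $M_b^*$ is an $m$-hypercontraction precisely when $B_n \ge 0$ for every $0 \le n \le m$. The first step is to compute the quadratic form of $B_n$ on the kernels. Since $(M_b^*)^j k_y = \overline{b(y)}^j k_y$ by \eqref{mult-adjoint}, a direct calculation gives
\[
	\la B_n k_y, k_z \ra = \sum_{j=0}^n (-1)^j \binom{n}{j} (b(z)\overline{b(y)})^j\, k(z,y) = k(z,y)\,(1 - b(z)\overline{b(y)})^n = k^{b,n}(z,y).
\]
Because $M_b$, hence each $B_n$, is bounded and self-adjoint, and the linear span of $\{k_y : y \in \cX\}$ is dense in $\cH_k$, this identity shows that $B_n \ge 0$ if and only if $k^{b,n} \gg 0$. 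This dictionary reduces the whole proposition to a statement about positivity of the kernels $k^{b,n}$.

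Given this, one implication is immediate: if $M_b^*$ is an $m$-hypercontraction then $B_m \ge 0$, i.e.\ $k^{b,m} \gg 0$. For the converse, and simultaneously for the final conclusion, I would exploit that the hypothesis $|b(x)| < 1$ makes
\[
	\frac{1}{1 - b(x)\overline{b(y)}} = \sum_{j \ge 0} (b(x)\overline{b(y)})^j
\]
a positive kernel: each summand is a rank-one positive kernel and the series converges pointwise since $|b(x)\overline{b(y)}| < 1$, so the limit is positive. By Schur's product theorem its integer powers $(1 - b(x)\overline{b(y)})^{-(m-n)}$ are positive kernels for $0 \le n \le m$. Writing
\[
	k^{b,n}(x,y) = k^{b,m}(x,y)\cdot \frac{1}{(1 - b(x)\overline{b(y)})^{m-n}}
\]
and applying Schur's product theorem once more, the assumption $k^{b,m} \gg 0$ forces $k^{b,n} \gg 0$ for every $0 \le n \le m$. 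Through the dictionary of the first paragraph this yields $B_n \ge 0$ for all such $n$, so $M_b^*$ is an $m$-hypercontraction, and it also gives directly the final assertion that $k^{b,n} \gg 0$ for $0 \le n \le m$.

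I expect no serious obstacle here; the content is concentrated in the single observation that the complete Nevanlinna--Pick kernel $1/(1 - b(x)\overline{b(y)})$ is positive, which is exactly what lets one factor a lower power $k^{b,n}$ out of $k^{b,m}$ by Schur multiplication. The only points requiring care are routine: the boundedness of $M_b$ (via the closed graph theorem) so that operator positivity may be tested on the dense set of kernels, and the bookkeeping in the binomial computation of $\la B_n k_y, k_z\ra$. An alternative to the factorization would be the recursion $B_n = B_{n-1} - M_b B_{n-1} M_b^*$, but since it propagates positivity upward in $n$ rather than downward it is less convenient here than the Schur-product argument.
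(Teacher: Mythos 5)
Your proposal is correct and follows essentially the same route as the paper: translate the hypercontraction inequalities into kernel positivity via $(M_b^*)^jk_y=\overline{b(y)}^jk_y$ and the binomial identity, then recover the lower powers $k^{b,n}$ from $k^{b,m}$ by Schur-multiplying with the positive kernel $(1-b(x)\overline{b(y)})^{-(m-n)}$. Your only addition is making explicit (via the geometric series, using $|b(x)|<1$) why $1/(1-b(x)\overline{b(y)})$ is positive, a point the paper simply cites as a property of normalized CNP kernels.
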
\begin{proof} 
We use repeatedly that the bounded operator $T$ on the reproducing kernel 
Hilbert space $\cH_k$ is positive  if and only if $Tk(x,y)\gg 0$. Thus if 
$M_b^*$ is $m-$hypercontractive then \eqref{eq:hypcontrac} holds for $n=m$, 
hence \begin{align*}	
	\sum_{j=0}^m (-1)^j\binom njM_b^jM_b^{*j}k(x,y)
	&=\sum_{j=0}^m( (-1)^j\binom njb(x)^j\overline{b(y)}^j)k(x,y)
	\\
	&=(1-b(x)\overline{b(y)})^mk(x,y)\gg 0.
\end{align*}
Conversely, if 	\eqref{eq:b-hypcontrac-improved} holds then 
\eqref{eq:hypcontrac} holds for $n=m$. Moreover, by Schur's product theorem any 
positive power of a complete Nevanlinna-Pick kernel is positive, therefore \[
	\frac{1}{(1-b(x)\overline{b(y)})^{m-n}}\gg 0,
\]
whenever $n<m$. Then another application of Schur's theorem gives that the 
product of the last two positive definite functions is positive definite, i.e. \[
	k(x,y)(1-b(x)b(y))^n\gg 0,
\]
whenever $n<m$. Equivalently, \eqref{eq:hypcontrac} holds for $0\le n<m$. 
Thus $M_b^*$ is $m-$hypercontractive and the result follows.
\end{proof}
We denote by $\cH_k(b,m)$ the Hilbert space with reproducing kernel 
$k^{b,m}$. We shall investigate  polynomial approximation in  $k=s^\beta$. To 
simplify notations we denote these spaces by $\cH_\beta(b,m)$. The following 
result completely solves a conjecture stated in 
\cite{gu_hypercontractions} in one complex variable ($d=1$).
\begin{corollary}\label{polynomial-approx-higher}
Let $d, m\ge 1$ be integers with $md<\beta$, let $k=s^\beta$   and  $b\in H^\infty(\bB_d)$ with 
$\|b\|_\infty\leq1$ and $|b(0)|<1$.  Then polynomials form a dense subset of $\cH_\beta(b,m)$.
\end{corollary}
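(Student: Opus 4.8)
The plan is to factor the kernel $k^{b,m}$ into a product of $m$ first-order sub-Bergman kernels, each of which already falls under Corollary~\ref{polynomial-approx-Bergman}, and then to transfer density to the product space via Corollary~\ref{prop:dense-algebra}. The hypothesis $md<\beta$ is precisely what makes such a factorization possible with admissible exponents.

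First I would set $\gamma=\beta/m$. Since $\beta>md$ we have $\gamma>d$, so $\cH_\gamma$ is a standard unitarily invariant weighted Bergman space on $\bB_d$, corresponding to a measure satisfying \eqref{same Bergman}. Because $b\in H^\infty(\bB_d)$ with $\|b\|_\infty\le 1$ and the multiplier algebra of $\cH_\gamma$ is isometrically $H^\infty(\bB_d)$, the function $b$ is a contractive scalar multiplier of $\cH_\gamma$; viewed as a $1$-dimensional row, it is an analytic row-contraction with $\|b(0)\|=|b(0)|<1$. Hence the sub-Bergman space $\cH_\gamma(b)=\cH_{s^\gamma}(b)$ is well defined, with reproducing kernel
\[
	s^\gamma(z,w)\bigl(1-b(z)\overline{b(w)}\bigr)=\frac{1-b(z)\overline{b(w)}}{(1-\la z,w\ra)^\gamma}.
\]

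Next I would record the factorization
\[
	k^{b,m}(z,w)=\frac{\bigl(1-b(z)\overline{b(w)}\bigr)^m}{(1-\la z,w\ra)^\beta}
	=\prod_{j=1}^m\frac{1-b(z)\overline{b(w)}}{(1-\la z,w\ra)^\gamma},
\]
so that $\cH_\beta(b,m)=\cH_k$ with $k=\prod_{j=1}^m s^j$ and each factor $s^j$ equal to the reproducing kernel of $\cH_\gamma(b)$. Since $\gamma>d$ and $|b(0)|<1$, Corollary~\ref{polynomial-approx-Bergman} applies to each factor and shows that the polynomials are dense in every $\cH_\gamma(b)$.

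Finally, the polynomials on $\bB_d$ form an algebra of functions that is dense in each of the (identical) factors $\cH_\gamma(b)$, so Corollary~\ref{prop:dense-algebra} gives density of the polynomials in the product space $\cH_\beta(b,m)$, as claimed. I do not expect any genuine obstacle: the whole argument rests on the trivial identity $\beta=m\gamma$ together with the inequality $\gamma>d$ supplied by $md<\beta$, which reduces the higher-order density statement to the first-order sub-Bergman density theorem already established. The only points worth stating carefully are that $\gamma>d$ is strict (needed to invoke Corollary~\ref{polynomial-approx-Bergman}) and that the polynomials indeed form an algebra dense in each factor (needed to invoke Corollary~\ref{prop:dense-algebra}).
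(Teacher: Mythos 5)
Your proof is correct and follows essentially the same route as the paper: the paper's own (much terser) proof likewise applies Corollary~\ref{polynomial-approx-Bergman} to $\cH_{\beta/m}(b)$ and then invokes Corollary~\ref{prop:dense-algebra} to pass density to the $m$-fold product kernel. The extra details you supply (that $\gamma=\beta/m>d$, that $b$ is a contractive multiplier of $\cH_\gamma$ via the isometric identification $\Mult(\cH_\gamma)=H^\infty(\bB_d)$, and that polynomials form a dense algebra in each factor) are exactly the points the paper leaves implicit.
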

\begin{proof}
By \cref{polynomial-approx-Bergman}, polynomials form a dense subset 
in $\cH_{\beta/m}(b)$, hence by \cref{prop:dense-algebra} they form a 
dense subset of $\cH_\beta(b,m)$.	
\end{proof}	
	
\begin{remark} 
The result holds when $d=1$ and $m=\beta$, provided that $b$ is a non-
extremal point in the unit ball of $H^\infty$. Instead of 
\cref{polynomial-approx-Bergman} just apply Sarason's dichotomy \cite{sarason} 
mentioned in the Introduction.

These methods generalize to cover other situations. For example, letting 
$b_1,\ldots,b_m$ be non-constant in the unit ball of $H^\infty$
one can prove  similar results about the space with kernel \[
	\frac{\prod_{j=1}^m(1-b_j(z)\overline{b_j(w)})}{(1-z\overline w)^\beta},
\]
where $\beta\geq m$. 

Furthermore, we can also consider the case when $b$ is vector-valued. However, 
in this case, when  $\beta=m$ Sarason's dichotomy is only known to 
hold when $b$ is a finite dimensional vector.
 
When $\beta>m$ all results mentioned in this remark extend to the context of 
several complex variables.
\end{remark}

\begin{question}
If $d=1$, $\beta=m$ and $b(z)=z$, then clearly $\cH_\beta(b,m)=\bC$. Does 
there exist $m>1$ and $b$ extremal in the unit  ball of $H^\infty$ such that 
$\cH_\beta(b,m)$ contains all polynomials? Do they form a dense subset?
\end{question}

\clearpage
\addcontentsline{toc}{section}{References}
\printbibliography
\end{document}